\documentclass[11pt]{amsproc}
\pdfoutput=1
\usepackage{amsmath,amssymb,amsthm,eucal, eufrak,amscd,tikz,mathtools}
\usepackage{xcolor}
\usepackage{enumitem}
\usepackage[shortalphabetic]{amsrefs}
\usepackage{tikz-cd}
\usepackage{graphicx}
\definecolor{allrefcolors}{rgb}{0,0.2,0.5}
\usepackage[linktocpage=true,colorlinks=true,allcolors=allrefcolors,bookmarksopen,bookmarksdepth=3]{hyperref}
\usepackage[margin=1.25in]{geometry}
\usepackage[all]{xy}
\usepackage{ stmaryrd }
\usepackage[draft]{say}
\usepackage{comment}

\title{Affine nil-Hecke algebras and Quantum cohomology}
\author{Eduardo Gonz\'alez}
\author{Cheuk Yu Mak}
\author{Dan Pomerleano}

\newtheorem{lem}{Lemma}[section]
\newtheorem{prop}[lem]{Proposition}
\newtheorem{thm}[lem]{Theorem}
\newtheorem{cor}[lem]{Corollary}

\newtheorem{defn}[lem]{Definition}

\newtheorem{rem}[lem]{Remark}
\newtheorem{example}[lem]{Example}

\def\ul{\underline}
\def\End{\operatorname{End}}
\def\vdim{\operatorname{vdim}}
\def\Ham{\operatorname{Ham}}
\def\Hom{\operatorname{Hom}}
\def\im{\operatorname{Im}}

\def\ev{\operatorname{ev}}
\def\ol{\overline}

\def\ol{\overline}
\def\cF{\mathcal{F}}

\def\cS{\mathcal{S}}
\def\cM{\mathcal{M}}
\def\cC{\mathcal{C}}

\def\cQ{\mathcal{Q}}
\def\bk{\mathbf{k}}
\definecolor{cym}{rgb}{.7,0,.4}\newcommand{\cym}{\color{cym}}


\begin{document}
\maketitle

\begin{abstract}
Let $G$ be a compact,  connected Lie group and $T \subset G$ a maximal torus.  Let $(M,\omega)$ be a monotone closed symplectic manifold equipped with a Hamiltonian action of $G$.  We construct a module action of the affine nil-Hecke algebra $\hat{H}_*^{S^1 \times T}(LG/T)$ on the $S^1 \times T$-equivariant quantum cohomology of $M$,  $QH^*_{S^1 \times T}(M).$ Our construction generalizes the theory of shift operators for Hamiltonian torus actions \cite{MR2753265, LJ21}.  We show that,  as in the abelian case,  this action behaves well with respect to the quantum connection.  As an application of our construction,  we show that the $G$-equivariant quantum cohomology $QH_G^*(M)$ defines a canonical holomorphic Lagrangian subvariety $\mathbb{L}_G(M) \hookrightarrow BFM({G^{\vee}_\mathbb{C}})$ in the BFM-space of the Langlands dual group,  confirming an expectation of Teleman from \cite{Teleman2}.   

\end{abstract}


\section{Introduction} 

Let $(M^{2n},\omega)$ be a monotone closed symplectic manifold,  that is $[\omega]=\lambda [c_1(M)] \in H^2(M)$ with $\lambda>0$,  equipped with a Hamiltonian action of a torus $T$.  Let $QH^*_{S^1 \times T}(M)$ denote the quantum cohomology of $M$ which is equivariant with respect to the $T$-action and loop rotation. As a vector space this is given by \begin{align*} QH^*_{S^1 \times T}(M):= H^*_{T}(M)[q^{\pm 1},u] \end{align*}
where $q$ is the Novikov variable, and $u$ is the positive
generator of $H^*(BS^1)$.  This vector space carries much
structure, the most elementary pieces of which are as
follows: \begin{itemize} \item The reduction modulo $u$ is
	the ordinary \(T\)-equivariant quantum cohomology,  $QH^*_ {T}(M)$,  which carries an equivariant quantum product.  \item The full equivariant quantum cohomology $QH^*_{S^1 \times T}(M)$ carries a quantum connection $\nabla_{q\partial_q}$, which differentiates in the direction of the Novikov variable. \end{itemize} 
 For every co-character $\sigma: S^1 \to T$,  \cite{MR2753265, LJ21} define a $\mathbf{k}[q^{\pm}]$-linear operator \begin{align*} S_\sigma: QH^*_{S^1 \times T}(M) \to QH^*_{S^1 \times T}(M) \end{align*} 
known as a shift operator.  These operators are $S^1$-equivariant lifts of the invertible operators of \cite{MR1487754} and have proven to be of central importance in the study of quantum cohomology of symplectic resolutions \cite{BMO, MO} and toric mirror symmetry \cite{Iritani}.  The three main properties of these operators are: \begin{enumerate}[label=(\Alph*)] \item \label{item:A}
$S_{\sigma_{1}} \circ S_{\sigma_{2}} = S_{\sigma_1+\sigma_2}$ \item \label{item:B} $S_{\sigma}$ is a $\sigma$-twisted homomorphism in the sense of \cite[Section 3.1]{Iritani}.  \item \label{item:C}  $S_{\sigma}$ commutes with the  quantum connection.  \end{enumerate}

The purpose of these notes is to: \begin{itemize} \item extend the theory of
shift-operators to the case where a general compact,
connected Lie group $G$ acts on $M$ in a Hamiltonian
fashion.  \item develop a connection between these non-abelian shift operators and well-known structures in geometric representation theory and gauge theory.   \end{itemize} The theory we construct is based on the equivariant topology of the smooth affine flag variety $LG/T$ equipped with its
natural $\hat{T}:=S^1 \times T$ action.  Intuitively,  our construction should be understood as a ``parameterized version" of a shift operator,  where the moduli spaces involved are parameterized  by ($\hat{T}$-equivariant) cycles in $LG/T$.  The relevant cycles in $LG/T$ define homology classes in a variant (reviewed in  \S \ref{section: BMH}) of the usual Borel equivariant homology which we call the ``semi-infinite equivariant" homology.\footnote{In the literature (e.g. \cite{BFM}),  this is referred to as 
equivariant Borel-Moore homology,  however we find this terminology to be potentially ambiguous. } The semi-infinite homology group $\hat{H}_*^{\hat{T}}(LG/T)$ is naturally a module over $H^{*}_{\hat{T}}(pt)$ and carries a convolution algebra structure known as the affine nil-Hecke algebra.  The structure of this ring has been studied extensively by Kostant and Kumar \cite{KK2, KK1,Ku} and has seen important applications to geometric representation theory and combinatorics (for example \cite{Ku2, Dye,Lam, OblomkovCarlsson}).  Our first main result is the following: 

\begin{thm}[see Theorem \ref{t:proof1}]\label{t:main} There is a module action \begin{align} \mathcal{S}: \hat{H}_*^{\hat{T}}(LG/T)\otimes QH^*_{\hat{T}}(M) \to QH^*_{\hat{T}}(M) \label{eq:SeidelThm} \end{align} For any $\beta \in H_{\hat{T}}^*(pt)$ and $\alpha \in \hat{H}^{\hat{T}}_*(L_{poly}G/T)$, we have $\cS(\beta \cdot \alpha,-)=\beta \cdot \cS(\alpha,-)$.  
\end{thm}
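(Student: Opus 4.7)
The plan is to realize $\mathcal{S}$ as a parametrized shift operator, extending the abelian construction of \cite{MR2753265, LJ21} by replacing a cocharacter $S^1 \to T$ with an $\hat T$-equivariant cycle in $LG/T$. Given $\alpha \in \hat{H}_*^{\hat T}(LG/T)$ represented in the semi-infinite model of \S \ref{section: BMH} by a proper $\hat T$-equivariant map $f : Z \to LG/T$, and a class $\theta \in QH^*_{\hat T}(M)$, a point $p \in LG/T$ classifies a principal $G$-bundle on $\mathbb{P}^1$ trivialized at $0$ and reduced to $T$ at $\infty$, so the family over $Z$ produces a fibre bundle $\mathcal{E}_Z \to Z \times \mathbb{P}^1$ with fibre $M$. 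Form the moduli $\mathcal{M}$ of pairs $(z,u)$ with $z \in Z$ and $u$ a $J$-holomorphic section of $\mathcal{E}_z$, equipped with equivariant evaluation maps $\ev_0, \ev_\infty : \mathcal{M} \to M$, and set
\[
\mathcal{S}(\alpha, \theta) := (\ev_\infty)_*\bigl((\ev_0)^* \theta \cap [\mathcal{M}]^{\mathrm{vir}}\bigr).
\]
Well-definedness (independence of the representative $f$ and of auxiliary perturbation data) follows from the standard $Z \times [0,1]$ cobordism argument, so $\mathcal{S}$ descends to a pairing on $\hat{H}_*^{\hat T}(LG/T) \otimes QH^*_{\hat T}(M)$.

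The heart of the theorem is the module identity $\mathcal{S}(\alpha_1 \ast \alpha_2, \theta) = \mathcal{S}(\alpha_1, \mathcal{S}(\alpha_2, \theta))$, where $\ast$ denotes the convolution product of the affine nil-Hecke algebra. The plan here is a degeneration-of-the-domain argument: consider a one-parameter family degenerating a smooth $\mathbb{P}^1$ to a nodal curve $\mathbb{P}^1 \cup_{pt} \mathbb{P}^1$. On the nodal side, the moduli of sections of the twisted $M$-bundle splits, via evaluation at the node, as the fibre product computing $\mathcal{S}(\alpha_1, \mathcal{S}(\alpha_2, \theta))$ --- one shift operator on each component, glued along the diagonal in $M$. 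On the smooth side, the associated family of $G$-bundles on $\mathbb{P}^1$ obtained by gluing the two component bundles along their common framing at the node is geometrically identified with a cycle representing $\alpha_1 \ast \alpha_2$ via the description of the nil-Hecke convolution in \S \ref{section: BMH}. A cobordism between the two ends of this family then yields the desired identity in $QH^*_{\hat T}(M)$.

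I expect the principal obstacle to be matching the geometric convolution construction with the algebraic affine nil-Hecke product, and controlling transversality and bubbling in the parametrized equivariant setting. This requires choosing $\hat T$-equivariant perturbation data compatible with the degeneration of the domain and a careful treatment of the semi-infinite nature of $Z$; in particular, one must verify that sphere bubbles appearing in the Gromov compactification of $\mathcal{M}$ contribute only in the expected codimensions after equivariant pushforward, so that the degeneration cobordism literally relates the two composition formulae.

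Finally, the linearity $\mathcal{S}(\beta \cdot \alpha, -) = \beta \cdot \mathcal{S}(\alpha, -)$ for $\alpha \in \hat{H}^{\hat T}_*(L_{poly}G/T)$ is essentially a projection-formula identity: on both sides, the $H^*_{\hat T}(pt)$-action corresponds to cap product with the pullback of $\beta$ along the natural classifying map $\mathcal{M} \to B\hat T$, and the two pullbacks (one routed through $Z$, one through the output copy of $M$) agree as equivariant cohomology classes on $\mathcal{M}$. The restriction to the polynomial part ensures that the relevant cycles admit finite-dimensional algebraic representatives over which this projection-formula manipulation is literal rather than only asymptotic.
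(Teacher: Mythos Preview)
Your construction of $\mathcal{S}$ via parametrized sections is essentially the paper's construction (carried out with more care via finite-dimensional approximations to $B\hat T$ and Jakob's geometric homology), and your projection-formula argument for the $H^*_{\hat T}(pt)$-linearity matches Lemma~\ref{l:linearity}.

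For the module identity, however, the paper takes a genuinely different route from your degeneration-of-the-domain proposal. Rather than gluing two copies of $\mathbb{P}^1$ and directly matching the resulting family with the convolution product, the paper exploits equivariant localization. Since $\hat{H}_*^{\hat T}(LG/T)$ is free over $R=H^{-*}(B\hat T)$, it embeds into its localization $F_R\otimes_R\hat{H}_*^{\hat T}(LG/T)$, where by Atiyah--Bott the algebra has an explicit basis $\{e_{\sigma[w]}\}$ indexed by the affine Weyl group with the simple multiplication law $(f_1 e_{\tilde w_1})(f_2 e_{\tilde w_2})=f_1\mathcal{A}_{\tilde w_1}(f_2)e_{\tilde w_1\tilde w_2}$. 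Likewise $QH^*_{\hat T}(M)$ embeds (via restriction to $M^T$) into a localized module. An elementary diagram-chase (Lemma~\ref{l:basic1}) reduces the module axiom to checking it after localization, where it becomes the identity $\mathcal{S}_{\sigma_1[w_1]}\circ\mathcal{S}_{\sigma_2[w_2]}=\mathcal{S}_{(\sigma_1[w_1])(\sigma_2[w_2])}$. A short computation (Lemma~\ref{ex:Weyl}) shows $\mathcal{S}_{[w]}$ is just the Weyl action, so this factors into the already-known abelian composition law for shift operators from \cite{LJ21}.

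The trade-off: your degeneration argument is the conceptually direct one and would, if carried out, give a self-contained proof not relying on the abelian case as a black box; but the obstacle you flag --- matching the nodal family with the algebraic nil-Hecke convolution, equivariantly and in the semi-infinite model --- is real and would require substantial work (essentially a parametrized, $\hat T$-equivariant version of the gluing in \cite{LJ21}). The paper's localization trick sidesteps this entirely: once at fixed points, the convolution is a twisted group algebra and there is nothing to match. What it costs is that the argument is no longer ``geometric'' in your sense and leans on torsion-freeness over $R$ and the abelian theory.
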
 

 We give a detailed overview of our construction in \S \ref{ss:overviewSeidelMap} and discuss its relation to other constructions which appear in the literature in \S \ref{section: relatedworks}.  In the case where $G$ is abelian, it is not difficult to see that this module structure is a re-packaging of properties \ref{item:A} and \ref{item:B} of shift-operators.  To keep the notation simple,  let us spell this out when $G=S^1.$ In this case $LS^1/S^1 \cong \Omega S^1$ and the $S^1 \times S^1$ action on $\Omega S^1$ is trivial.  So $\hat{H}_*^{S^1 \times S^1}(\Omega S^1)$ naturally splits and it follows that there is a ring isomorphism $$ \hat{H}_*^{G}(\Omega S^1) \cong H^*(BS^1)\otimes \mathbb{Z}[\pi_1(S^1)] \cong \mathbb{Z}[h,z^{\pm 1}].  $$ The convolution product is given by the deformation (see \cite[Example 7.2]{Teleman1} or \eqref{eq:multipbasis} below) 
$$  \hat{H}_*^{S^1 \times S^1}(LS^1/S^1) \cong  \frac{\mathbb{Z}[u] \langle h,z^{\pm 1} \rangle}{(zh=(h+u)z)} $$ 

Our second main result is the generalization of property \ref{item:C} to the non-abelian context. 

\begin{thm}[see Theorem \ref{t:commute}]\label{t:connection}
 For any $\alpha \in \hat{H}_*^{\hat{T}}(LG/T)$,  \begin{align*} \mathcal{S}_\alpha \circ \nabla_{q\partial_q}= \nabla_{q\partial_q} \circ \mathcal{S}_\alpha \end{align*} \end{thm}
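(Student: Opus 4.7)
The plan is to adapt the argument establishing property \ref{item:C} in the abelian case \cite{MR2753265, LJ21} to the parameterized non-abelian setting. In the abelian case, the quantum connection is a sum of the Novikov derivative $q\partial_q$ and a degree-two quantum multiplication operator built from the equivariant first Chern class, and the identity $[\nabla_{q\partial_q}, S_\sigma] = 0$ is established via a Chern-class identity on the Seidel bundle $E_\sigma \to \mathbb{P}^1$: the $q\partial_q$-term, acting on a count of sections of class $\beta$, contributes the symplectic area $\omega(\beta)$, which by monotonicity equals a multiple of $c_1^{\hat{T}}(TE_\sigma^{vert}) \cdot \beta$; the Chern-class-insertions at the two markings $0, \infty \in \mathbb{P}^1$ pair with the fiber restrictions $c_1^{\hat{T}}(TE_\sigma^{vert})|_{\{0\}}$ and $c_1^{\hat{T}}(TE_\sigma^{vert})|_{\{\infty\}}$; an equivariant splitting/divisor argument then yields cancellation.

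For Theorem \ref{t:connection}, I would represent $\alpha \in \hat{H}_*^{\hat{T}}(LG/T)$ by an equivariant chain $P \to LG/T$, which via the construction in \S \ref{ss:overviewSeidelMap} produces a parameterized family $\mathcal{E}_\alpha \to \mathbb{P}^1 \times P$ of $M$-bundles, and $\mathcal{S}_\alpha$ counts pseudo-holomorphic sections of the fibers. The proof then decomposes into three steps: (i) identify the $q\partial_q$-contribution to $[\nabla_{q\partial_q}, \mathcal{S}_\alpha]$ with the pairing $c_1^{\hat{T}}(T\mathcal{E}_\alpha^{vert}) \cdot \beta$ against a parameterized section class of degree $\beta$, using monotonicity of $M$; (ii) identify the Chern-class-multiplication contribution to the commutator with the difference of insertions over the fibers $\{0\}\times P$ and $\{\infty\}\times P$, which by the splitting calculus for the family equals the same pairing; (iii) conclude cancellation after summing over section classes. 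Step (iii) is realized geometrically via a parameterized cobordism on an auxiliary moduli space carrying one extra marked point that slides along $\mathbb{P}^1$ between $0$ and $\infty$.

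The principal technical obstacle lies in step (ii) and in correctly handling the semi-infinite nature of $LG/T$. The family $\mathcal{E}_\alpha$ is built from a universal $M$-bundle over $\mathbb{P}^1 \times LG/T$ determined by clutching data from based loops, and one must verify the Chern-class identity
\begin{align*}
c_1^{\hat{T}}(T\mathcal{E}_\alpha^{vert}) = c_1^{\hat{T}}(T\mathcal{E}_\alpha^{vert})|_{\{0\}\times P} - c_1^{\hat{T}}(T\mathcal{E}_\alpha^{vert})|_{\{\infty\}\times P} + (\text{multiple of } [\mathbb{P}^1])
\end{align*}
in the appropriate semi-infinite equivariant cohomology, where the loop-rotation variable $u$ enters with weights distinct from those in ordinary Borel cohomology. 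One must also verify that the parameterized moduli spaces admit compactifications compatible with the chain representative of $\alpha$, so that the Stokes-type cancellation is valid at the cycle level rather than merely in homology. Once these bundle-theoretic and compactness foundations are in place, the algebraic core of the argument mirrors the abelian case.
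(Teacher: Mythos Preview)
Your proposal is a plausible direct approach, but it differs substantially from the paper's proof, which is much shorter because it \emph{avoids} the parameterized divisor argument entirely. The paper's proof of Theorem \ref{t:commute} proceeds by localization: since $QH^*_{\hat{T}}(M)$ embeds into $F_R\otimes_R QH^*_{\hat{T}}(M)$ and $\hat{H}_*^{\hat{T}}(LG/T)$ embeds into $\bigoplus_{\sigma[w]\in\tilde W} F_R\cdot e_{\sigma[w]}$, it suffices to check the commutation for each fixed-point operator $\cS_{\sigma[w]}$. Using the factorization $\cS_{\sigma[w]}=\cS_{[w]}\circ \cS_{w^{-1}\sigma w}$ from Lemma \ref{ex:Weyl}, one observes that $\cS_{[w]}$ commutes with $\nabla_{q\partial_q}$ because the Weyl group acts by symplectomorphisms (hence preserves the quantum product and $c_1^{\hat T}(M)$), and the remaining case $\cS_\sigma$ for a co-character $\sigma$ is exactly the abelian shift-operator result of \cite{LJ21}. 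No new moduli spaces or parameterized cobordisms are needed beyond those already present in the abelian theory.

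Your route would instead redo the intertwining relation of \cite[Theorem 3.4]{LJ21} in the parameterized setting, over an arbitrary cycle $P$. This can be made to work, but two cautions are in order. First, the displayed identity you wrote for $c_1^{\hat T}(T\mathcal{E}_\alpha^{vert})$ is not a well-posed cohomological equation (the left-hand side lives on the total space, the right-hand side on divisors); what you actually need is the intertwining relation $\mathcal{C}(\cQ(y)\ast c_1^{vert}|_0)-\mathcal{C}(\cQ(y))\ast c_1^{vert}|_\infty = u\,\mathcal{WC}(\cQ(y),c_1^{vert})$, proved by sliding a marked point carrying the class $c_1^{vert}$ across $\mathbb{P}^1$ in each fiber. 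Second, the semi-infinite and compactness issues you flag are genuine: you would need the parameterized version of the divisor equation to be compatible with the inverse/direct limit procedure defining $\cS$ (Theorem \ref{t:invSeidel} and Lemma \ref{r:inverselimit}), which the paper never has to confront because localization reduces everything to zero-dimensional cycles. The payoff of your approach would be a self-contained argument not relying on \cite{LJ21}; the cost is reproducing that paper's core analytic input in a more general setting.
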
 


The first person to consider relations between ($u=0$ limits of) affine nil-Hecke algebras and quantum cohomology was Peterson \cite{Peterson}.  In the special case where $M:=G/P$ is a generalized flag variety,  he conjectured a combinatorially defined formula for a ring homomorphism $\mathcal{P}: \hat{H}_*^{T}(\Omega G) \to QH_T^*(G/P)$,  where $\hat{H}_*^{T}(\Omega G)$ is equipped with the Pontryagin product.  Peterson's formula was later justified by Lam-Shimozono \cite{Lam-Shimozono} and has recently been given a geometric interpretation in the series of works \cite{ChiHong1, ChiHong3,ChiHong2}.   It turns out that a similar homomorphism can be defined for any $M$ using $u=0$ limits of shift-operators.  Namely,  there is a natural $T$-equivariant map $j: \Omega G \to LG/T$,  which induces a map: \begin{align*} j_*: \hat{H}_*^{T}(\Omega G) \to \hat{H}_*^{T}(LG/T).  \end{align*}

Let \begin{equation} \begin{aligned} \label{eq:peterson2} \mathcal{P}: \hat{H}_*^{T}(\Omega G) \to QH_T^*(M),  \\ \alpha_0 \mapsto \mathcal{S}_{j_{*}(\alpha_0)}^{u=0}([M]_T) \end{aligned} \end{equation}

 \begin{lem}[see Lemma \ref{l:Peterson}] Equip $\hat{H}_*^{T}(\Omega G)$ with the $T$-equivariant Pontryagin product and $QH_T^*(M)$ with the equivariant quantum product.  The map \eqref{eq:peterson2} becomes a ring homomorphism.   \end{lem}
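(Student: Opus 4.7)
The plan is to factor $\mathcal{P}$ through the module action of Theorem~\ref{t:main} and reduce the ring homomorphism property to two essentially independent inputs: (a) that $j_*$ carries the Pontryagin product on $\hat{H}_*^{T}(\Omega G)$ to the convolution product on $\hat{H}_*^{T}(LG/T)$, and (b) that for $\alpha_0 \in \hat{H}_*^{T}(\Omega G)$ the operator $\mathcal{S}_{j_*(\alpha_0)}^{u=0}$ on $QH^*_T(M)$ coincides with quantum multiplication by the class $\mathcal{P}(\alpha_0) = \mathcal{S}_{j_*(\alpha_0)}^{u=0}([M]_T)$.

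For input (a), I would appeal to the Kostant--Kumar identification of $\hat{H}_*^{T}(\Omega G)$ with its Pontryagin product as the Peterson commutative subalgebra of the ($u=0$) affine nil-Hecke algebra. Geometrically this reflects the fact that $\Omega G \hookrightarrow LG$ is an inclusion of topological groups and that the multiplication of $LG$ is exactly the data entering the convolution construction on $LG/T$. Granting this, one has $j_*(\alpha_0 \cdot \beta_0) = j_*(\alpha_0) \star j_*(\beta_0)$, and combining with the module axiom in Theorem~\ref{t:main} yields the operator identity
\begin{equation*}
\mathcal{S}_{j_*(\alpha_0 \cdot \beta_0)}^{u=0} = \mathcal{S}_{j_*(\alpha_0)}^{u=0} \circ \mathcal{S}_{j_*(\beta_0)}^{u=0}.
\end{equation*}
Evaluating on $[M]_T$ and then applying input (b) to the outer operator would give
\begin{equation*}
\mathcal{P}(\alpha_0 \cdot \beta_0) = \mathcal{S}_{j_*(\alpha_0)}^{u=0}\bigl(\mathcal{P}(\beta_0)\bigr) = \mathcal{P}(\alpha_0) *_Q \mathcal{P}(\beta_0),
\end{equation*}
which is the claimed ring homomorphism property.

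The main obstacle is input (b). In the abelian setting this is immediate from property~\ref{item:B}: at $u=0$ a $\sigma$-twisted homomorphism becomes an honest ring map, so $S_\sigma$ acts as quantum multiplication by the Seidel element $S_\sigma([M]_T)$. In the general setting I would expect the analogous untwisting to hold \emph{precisely} for Peterson classes $j_*(\alpha_0)$: these are supported on based loops, so no residual $T$-twist intervenes, and modulo $u$ the action should reduce to the honest quantum product with $\mathcal{P}(\alpha_0)$. Carrying this out cleanly means unraveling the geometric definition of $\mathcal{S}$ for cycles pulled back from $\Omega G$ and matching the resulting curve count, with one marked point frozen as the fundamental class, against the structure constants of the quantum product on $M$; the delicate step will be tracking how the $T$-equivariance of the cycle interacts with the based-loop condition once the loop-rotation variable $u$ is set to zero.
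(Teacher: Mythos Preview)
Your reduction to inputs (a) and (b) is sound, and input (a) is indeed correct (the paper cites \cite[Lemma~4.3]{Lam} for exactly this). The gap is precisely where you locate it: input (b), that $\mathcal{S}_{j_*(\alpha_0)}^{u=0}$ acts on \emph{all} of $QH^*_T(M)$ as quantum multiplication by $\mathcal{P}(\alpha_0)$, for every Peterson class $\alpha_0$. You sketch a geometric attack but do not carry it out, and the based-loop/untwisting heuristic by itself does not constitute a proof.

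The paper takes a different and shorter route that bypasses (b) entirely. Instead of proving an operator identity, it verifies multiplicativity of $\mathcal{P}$ after base-change to the fraction field $F_{\bar R}$ of $\bar R=H^*(BT)$. Since $j_*$ is $\bar R$-linear, so is $\mathcal{P}$; an elementary diagram-chase (Lemma~\ref{lem:tensorhom}) then says it suffices to check the ring homomorphism property for the $F_{\bar R}$-linear extension. But by Atiyah--Bott localization, $F_{\bar R}\otimes_{\bar R}\hat H_*^T(\Omega G)$ has an $F_{\bar R}$-basis given by the $T$-fixed points of $\Omega G$, i.e.\ the co-characters $\sigma\in\mathcal X(T)$, and on these the claim is exactly the classical abelian Seidel identity
\[
\mathcal{S}_{\sigma_1}^{u=0}([M]_T)\ast_{QH}\mathcal{S}_{\sigma_2}^{u=0}([M]_T)=\mathcal{S}_{\sigma_1\sigma_2}^{u=0}([M]_T).
\]

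The same localization step would in fact close your gap: both sides of (b) are $\bar R$-linear in $\alpha_0$ (the left by Lemma~\ref{l:linearity}, the right because $\mathcal P$ is $\bar R$-linear and $\ast_{QH}$ is $\bar R$-bilinear), so it is enough to check (b) on co-characters, where it is the standard fact that the $u=0$ Seidel operator for $\sigma$ is quantum multiplication by the Seidel element. Thus your strategy can be completed, but the missing ingredient is precisely the localization-to-fixed-points argument that the paper uses directly.
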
 

We caution the reader that we do not actually prove that when $M=G/P$,  the map \eqref{eq:peterson2} agrees with Peterson's formula.  However,  we expect that the methods from \cite{ChiHong3} could be used to prove this.  It seems to be an interesting open problem to give an analogous combinatorial formula for the shift-operators on $QH^*_{\hat{T}}(G/P)$.  

A second source of motivation for the present paper comes from work of Teleman \cite{Teleman2, Teleman1} (see also \cite{BDGH} for a physical derivation),  who has proposed a framework connecting $G$-equivariant symplectic topology and Rozansky-Witten theory (the 3D B-model).  Let $G^{\vee}$ be the Langlands dual group of $G$,  and $G_\mathbb{C}^{\vee}$ denote the complexification of $G^{\vee}$.  The BFM-space $BFM(G^{\vee}_\mathbb{C})$ is defined to be the variety formed by pairs $(g,x)$ where $x$ lies in a fixed Kostant slice of the Lie algebra $\mathfrak{g}_\mathbb{C}^{\vee}$,  $g \in G_{\mathbb{C}}^{\vee},$ and $\operatorname{Ad}_g(x)=x$. This is a smooth,  affine holomorphic symplectic manifold.  In fact,  it is hyperk{\"a}hler as can be seen from its alternative description as a moduli space of solutions to Nahm's equation for $G^{\vee}$ \cite[Appendix A]{BFN}.



 Teleman conjectures that a compact symplectic manifold $M$ with Hamiltonian $G$-action should define an object in the Rozansky-Witten 2-category of $BFM(G^{\vee}_\mathbb{C}).$ While this 2-category has yet to be rigorously defined,  part of the data defining such an object is expected to be a holomorphic Lagrangian subvariety in $BFM(G^{\vee}_\mathbb{C})$.   As we now explain,  the theory of shift operators allows for a direct construction of this Lagrangian.  The link to shift operators is provided by a result of Bezrukavnikov-Finkelberg-Mirkovic \cite[Theorem 2.12]{BFM},  who prove that there is an isomorphism of algebraic varieties:  \begin{align*} BFM(G^{\vee}_\mathbb{C}) \cong \operatorname{Spec}(\hat{H}_*^G(\Omega G,\mathbb{C})),  \end{align*} where $\Omega G= LG/G$ is the based loop space equipped with its Pontryagin product.  The semi-infinite equivariant homology $\hat{H}_*^{S^1 \times G}(\Omega G)$ also carries a convolution product,  which gives a deformation quantization of the Pontryagin ring (in the direction of the symplectic structure).  When the ground field $\mathbf{k}$ has characteristic zero,  there is an embedding 
\begin{align*} \hat{H}_*^{S^1 \times G}(\Omega G)
  \hookrightarrow  \hat{H}_*^{{S^1\times T}}(LG/T) \end{align*}
which after identifying $QH^*_{S^1 \times G}(M)= QH^*_{{S^1\times T}}(M)^{W}$,  induces a module structure \begin{align} \label{eq:SGintro} \mathcal{S}_G: \hat{H}_*^{S^1 \times G}(\Omega G)\otimes QH^*_{S^1 \times G}(M) \to QH^*_{S^1 \times G}(M) \end{align} and hence a module structure $\hat{H}_*^{G}(\Omega G)\otimes QH^*_{G}(M)_{|q=1} \to QH^*_{G}(M)_{|q=1}$ by reduction.   

Thus,  we can view $QH^*_{G}(M,\mathbb{C})_{|q=1}$ as defining a coherent sheaf over $BFM(G^{\vee}_\mathbb{C}).$  As noted by Teleman \cite[Remark 2.3]{Teleman1},  the classical theory of modules over a deformation quantization \cite{Gabber} easily implies: 

\begin{cor}\label{c:LagrangianSupport}
Suppose $G$ is a compact,  connected Lie group and let $M$ be as above.  The support of $QH^*_G(M,\mathbb{C})_{|q=1}$ as a coherent sheaf over $BFM(G^{\vee}_\mathbb{C})$ is a (possibly singular) holomorphic Lagrangian subvariety $\mathbb{L}_G(M) \hookrightarrow BFM(G^{\vee}_\mathbb{C}).$   \end{cor}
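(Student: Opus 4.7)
I plan to apply Gabber's involutivity theorem \cite{Gabber} for modules over a deformation quantization, and then supplement it with a dimension bound coming from finite generation over classical equivariant cohomology. Write $A := \hat{H}_*^{S^1 \times G}(\Omega G,\mathbb{C})$ and $A_0 := A/uA \cong \mathcal{O}(BFM(G^{\vee}_\mathbb{C}))$. By the discussion immediately preceding the corollary, $A$ is a $\mathbb{C}[u]$-linear deformation quantization of $A_0$ realizing the holomorphic symplectic form of $BFM(G^{\vee}_\mathbb{C})$ to first order in $u$. Set $N := QH^*_{S^1 \times G}(M,\mathbb{C})_{|q=1}$, viewed as an $A$-module via the specialization of \eqref{eq:SGintro} at $q=1$.

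First I would verify that $N$ is coherent over $A$. As an $H^*_{S^1 \times G}(pt,\mathbb{C})$-module, $N \cong H^*_G(M,\mathbb{C})[u]$ is finitely generated over $H^*_G(pt,\mathbb{C})[u]$ since $M$ is compact, and the inclusion of constant loops $pt \hookrightarrow \Omega G$ induces a ring map $H^*_G(pt,\mathbb{C})[u] \to A$ through which the $A$-action restricts to the classical cap product. Hence $N$ is finitely generated over $A$, and it is also $\mathbb{C}[u]$-flat (being free over $\mathbb{C}[u]$). Gabber's theorem then asserts that $\operatorname{Supp}(N/uN) = \operatorname{Supp}\bigl(QH^*_G(M,\mathbb{C})_{|q=1}\bigr)$ is a coisotropic subvariety of $BFM(G^{\vee}_\mathbb{C})$. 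In particular every irreducible component has complex dimension at least $r := \operatorname{rank}(G) = \tfrac{1}{2}\dim_{\mathbb{C}} BFM(G^{\vee}_\mathbb{C})$.

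To upgrade from coisotropic to Lagrangian, I would then establish the matching upper bound. The ring map $H^*_G(pt,\mathbb{C}) \to A_0$ above corresponds geometrically to the Kostant-slice projection $\pi: BFM(G^{\vee}_\mathbb{C}) \to \operatorname{Spec}(H^*_G(pt,\mathbb{C}))$, whose generic fibers (centralizers of regular semisimple elements) are $r$-dimensional. Since $QH^*_G(M,\mathbb{C})_{|q=1}$ is finitely generated over $H^*_G(pt,\mathbb{C})$ and becomes finite-dimensional over the fraction field of $H^*_G(pt,\mathbb{C})$ (by Atiyah--Bott localization for the maximal torus $T \subset G$ followed by passage to Weyl invariants), its support has generically finite fibers over $\pi$, yielding $\dim \operatorname{Supp} \leq r$. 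A coisotropic subvariety of middle dimension is Lagrangian, completing the argument.

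The main content is really Gabber's involutivity theorem, which I would import as a black box; the remaining steps are routine combinations of standard facts about equivariant cohomology of compact Hamiltonian spaces and the structure of the BFM space. The only identification requiring care is that the map $H^*_G(pt,\mathbb{C}) \hookrightarrow A_0$ induced by $pt \hookrightarrow \Omega G$ coincides with the Kostant-slice projection, which is built into the BFM isomorphism of \cite{BFM}.
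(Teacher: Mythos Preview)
Your proof is correct and follows essentially the same route as the paper: apply Gabber's theorem to obtain coisotropic support, then use finite generation of $QH^*_G(M)$ over $H^*_G(pt)$ to bound the dimension by $r=\operatorname{rank}(G)$.

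One small point: your upper bound is slightly overcomplicated. Once you know $QH^*_G(M,\mathbb{C})_{|q=1}$ is a finite $H^*_G(pt,\mathbb{C})$-module, \emph{every} fiber of $\operatorname{Supp}_{A_0}$ over $\operatorname{Spec}(H^*_G(pt,\mathbb{C}))$ is finite (the fiber of the module is finite-dimensional, hence has finite support in the fiber of $\pi$), so $\dim\operatorname{Supp}\le r$ directly; the Atiyah--Bott localization step is unnecessary, and ``generically finite'' alone would not control components failing to dominate the base. The paper makes exactly this direct argument: $QH^*_G(M)$ finite over $H^*(BG)$ forces $\dim\operatorname{Supp}=\dim\operatorname{Spec}(H^*(BG))=\tfrac{1}{2}\dim BFM(G^\vee_\mathbb{C})$.
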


To illustrate this corollary,  we show (see the end of \S \ref{section: other}) how this Lagrangian subvariety appears in various calculations of quantum cohomology which appear in the literature.  This result provides a conceptual underpinning for the appearance of Lagrangian subvarieties in calculations of quantum cohomology (c.f.  the philosophical question posed on page 613 of \cite{GiventalKim93}).  It is worth noting that Teleman \cite[Section 2]{Teleman2}  envisioned a somewhat different construction of the $\hat{H}_*^{G}(\Omega G)$ module structure on $QH^*_{G}(M)$ based on the ``Fukaya 2-category" of $T^*G$.  Versions of his construction have since been carried out in non-equivariant Floer theory \cite{MR3868001,Oh-Tanaka}.  Generalizing these ideas to $G$-equivariant Floer theory would require developing the foundations of $G$-equivariant (quilted-) Lagrangian Floer theory but could be natural for incorporating chain-level algebraic structures.  It would also be interesting to understand how the quantized module  \eqref{eq:SGintro} could be constructed from this story.

\subsection{Related Constructions} \label{section: relatedworks}

As mentioned above,  our construction involves counts of pseudo-holomorphic sections parameterized by equivariant cycles in $LG/T$.  A closely related idea was first introduced by Savelyev \cite{Sav} to define a version of the Seidel homomorphism which incorporates higher-dimensional cycles of Hamiltonian loops.  Moreover,   Savelyev's applications to Hofer geometry make use of $S^1$-equivariant versions of these moduli spaces (with respect to loop-rotation action on $LHam(M,\omega)$).  The moduli spaces that we consider in this paper are appropriate $S^1 \times T$-equivariant modifications of his constructions.  This additional equivariance leads to new features --- the algebraic structures that we consider (affine nil-Hecke action compatible with connections) together with the resulting links to geometric representation theory are only present when $M$ admits a Hamiltonian $G$-action and one works $S^1 \times T$ (or $S^1 \times G$)-equivariantly.  We also note that the construction of \cite{ChiHong3} also extends Savelyev's work (in the special case where $M=G/P$ and without loop-equivariance) and hence is also closely related to our work.

\subsection{Organization of the paper}

In Section \ref{section: BMH}, we recall the definition of $\hat{H}_*^{\hat{T}}(LG/T)$ and its convolution product structure.  In Section \ref{s:Seidelmor}, we construct the equivariant Seidel morphism from \eqref{eq:SeidelThm}.  Section \ref{s:module} and Section \ref{s:connection} are devoted to the proof of Theorem \ref{t:main} and \ref{t:connection}, respectively.  In Section \ref{section: other}, we recall 
Gabber's theorem on deformation quantization, and then give the proof of Corollary \ref{c:LagrangianSupport}.

\subsection*{Acknowledgements}
  C.M. would like to thank Pavel Safronov for helpful communications.  D.P.  would like to thank Constantin Teleman for his generous and patient explanations of \cite{Teleman2,  Teleman1}.  He would also like to thank Victor Ginzburg for a helpful email exchange.  C.M. is supported by the Simons Collaboration on Homological Mirror Symmetry.   D.P.  was partly supported by the Simons Collaboration in Homological Mirror Symmetry,  Award \# 652299 while working on this project.

\section{Topology background} \label{section: BMH}
\subsection{Semi-infinite homology} 

Let $K$ be a compact, connected Lie group.  We will need to discuss a variant of equivariant homology of $K$-spaces $N$,  which we call semi-infinite homology.  Roughly speaking the theory behaves like
equivariant cohomology in the ``$BK$-directions" and
homology in the ``$N$-directions" and arises naturally when studying Poincar\'e-Lefschetz
 duality in the equivariant context.    This theory was introduced in the algebro-geometric literature \cite{MR1614555,  MR1786481,Graham} and a nice topological reference is \cite{MR3267019,  MR3190596}.  All homology groups below will be taken with respect to some ground field $\bk$.

Let us recall the definitions from \cite{MR1614555, MR1786481,Graham},  which are given in terms of finite dimensional approximations to classifying spaces of compact Lie groups.  Recall that there is a convenient model for the total space of the universal $U(k)$-bundle,  $EU(k)$, given by taking the set of orthormal $k$-frames in a complex Hilbert space $\mathcal{H}.$ Topologically,  this is the limit of spaces of $k$-frames in $\mathbb{C}^n$  as $n \to \infty$:  $$ EU(k)_1 \subset \cdots \subset EU(k)_n \subset \cdots .$$ The group $U(k)$ acts freely on this space and the quotient is the infinite Grassmannian of \(k\)-planes $Gr_k(\mathcal{H}).$ For any vector $\vec{k}=(k_1,\cdots,k_\ell)$, let $$U(\vec{k}):=\prod U(k_i),  EU(\vec{k}):=\prod EU(k_i)$$ For a general compact Lie group $K$, choose a faithful embedding of $K$ into a product of unitary groups (such embeddings exist by the Peter-Weyl theorem and Weyl's unitary trick) $$\rho: K \hookrightarrow U(\vec{k}).$$ Then $K$ acts on $EU(\vec{k})$, we let $BK_{\rho}$ denote the quotient \begin{align} \label{eq:classifying} BK_{\rho}:=EU(\vec{k})/K,  \end{align} which is a model for the classifying space of $K$.   Similarly,  when thinking of $EU(\vec{k})$ as a $K$-space,  we will denote it by $EK_{\rho}.$  Finally,  we define $ BK_{\rho,n}:= EK_{\rho,n}/K$ to be the corresponding finite dimensional approximations of $BK$.  We will typically fix one such model for the classifying space when performing our constructions,  and for ease of notation,  we will drop $\rho$ subscripts from the notation when no confusion is possible. 

Now let $N$ be a finite $K$ CW-complex \cite[Chapter 2]{tDieck} and let $N_{borel,n}:= (N \times EK_n) /K$
be the finite dimensional approximations to the Borel mixing space $N_{borel} := (N \times EK) /K.  $  Let $V_{n,n+1}$ be a $K$-equivariant tubular neighborhood of $EK_n \subset EK_{n+1}$.  Then the quotient $V_{n,n+1} \times_K N$ is an open neighborhood of $N_{borel,n} \subset N_{borel,  n+1}$ which is homeomorphic to an oriented vector bundle over $N_{borel,n}.$  This means that even though the strata $N_{borel,n}$ are not manifolds,  the inclusions $N_{borel,n} \subset N_{borel,  n+1}$ are ``normally non-singular" (see \cite[Section 5.4.1]{GMII}) and this allows us to define Gysin pull-back maps on homology using excision and the Thom isomorphism in the usual way:  \begin{align*} H_*(N_{borel,{n+1}}) \to & H_*(N_{borel,{n+1}},  N_{borel,{n+1}} \setminus N_{borel,n})\\  \cong& H_*(V_{n,n+1} \times_K N, V_{n,n+1} \times_K N-0) \\ \cong& H_{*-\operatorname{rank}(V_{n,n+1})}(N_{borel,n})  \end{align*}

 The semi-infinite homology of $N$ is defined to be the limit 
\begin{align} \label{eq: inverselimit}   
  \hat{H}_{\ast}^K(N) = \varprojlim_n H_ {\operatorname{dim} BK_n+ \ast}(N_{borel,n}) 
\end{align} 
where the maps in the inverse system are defined using these Gysin pull-back maps.  Note that for any fixed degree,  the inverse limit \eqref{eq: inverselimit} stabilizes (see e.g.  \cite[page 79]{MR1786481} or \cite[page 601]{Graham}).  An important special case is that where $N$ is a compact,  $K$-oriented smooth manifold.  Here,  the Gysin pull-back maps in the inverse limit \eqref{eq: inverselimit} can be alternatively defined using Poincar\'e duality on each $N_{borel,n}$: 
\begin{align}  \label{eq: inverselimit2}  
  PD: H_ {\operatorname{dim} BK_n+\ast}(N_{borel,n})  \cong H^{\operatorname{dim}(N)-\ast}(N_{borel,n})  
\end{align} 
together with the usual pull-back on cohomology.  This last description shows that $\hat{H}_{\ast}^K(N)$ carries an equivariant fundamental class $[N]_K \in  \hat{H}_{\operatorname{dim}(N)}^K(N).$ 

The argument of \cite[Proposition 1]{MR1614555} also shows that these groups are independent of the model for the classifying space $BK_{\rho}.$ In fact,  although we will not make use of it in this text,  the above homology groups have more conceptual definitions that avoid the language of finite-dimensional approximations entirely.  We briefly describe a definition using the language of dg-local systems --- this approach is ``Koszul dual" to the one in \cite{MR3267019,  MR3615739} and makes the relation to the finite-dimensional approach more transparent.  Suppose that $(S,p)$ is a pointed connected CW-complex.  (We will systematically eliminate the base point from our notation.) A dg local-system (with ground field $\bk$) $E$ over $S$ is a module over $C_*(\Omega S,\bk)$ with its Pontryagin product.  We define functors: \begin{align} C^{\ast}(S;E) := \operatorname{RHom}_{C_{*}(\Omega S)}^{\ast}(\bk, E) \\ C_*(S;E) := \bk\otimes^{L}_{C_{*}(\Omega S)}E \end{align} 

It is well-known that these definitions extend the usual definitions of (co)homology with coefficients in a classical/discrete local system (\cite[\S 2.5]{Malm}).  A key example of a dg-local system occurs in the situation of a fiber bundle $ F \to Y \to S $.  Then the monodromy action gives the chains on the fiber $C_*(F)$, the structure of a dg-module over $C_*(\Omega S).$ We have a quasi-isomorphism: \begin{align}  C_*(Y) \cong C_*(S, \underline{C_*(F)}), \end{align} 
where $\underline{C_*(F)}$ indicates the corresponding local system.  There is also a direct generalization of Poincar\'e duality to this context.  Namely,  suppose $S$ has the homotopy type of a compact oriented manifold of dimension $n$.  Then there is a duality isomorphism (see \cite[Theorem 2.5.2]{Malm}): \begin{align} \cap [S] : C^{\ast}(S; E) \cong C_{n-*}(S; E) \end{align}

To connect this with equivariant topology,  notice that if $K$ acts on a CW complex $N$,  then $C_*(N)$ is a dg-module over $C_*(K) \cong C_*(\Omega BK).$ For any finite dimensional $K$ CW-complex $N$,  we can alternatively define\footnote{The $-\ast$ appears because we use homological grading conventions instead of the cohomological grading conventions to describe the homology theory. } \begin{align} \label{eq:lsapproach} \hat{H}^K_*(N):= H_*(C^{-*}(BK; \underline{C_*(N)})) \end{align}

 To see the relation with the finite-dimensional approach,  let us consider the restriction of our local system to some $BK_n$.  Then by Poincar\'e duality,  there are quasi-isomorphisms
\begin{align} \label{eq:finiteapproxiso} C^{-*}(BK_n; \underline{C_*(N)}) \cong C_{\operatorname{dim} BK_n +\ast}(BK_n; \underline{C_*(N)}) \cong C_{\operatorname{dim} BK_n +\ast}(N_{borel,n}) \end{align} 
identifying this cohomology over $BK_n$ with the levels of \eqref{eq: inverselimit}.  The restriction maps can be identified with those in \eqref{eq: inverselimit},  showing the equivalence of the two approaches.     

\begin{rem} From the local system perspective,  the equivariant fundamental class $[N]_K$ which arises from \eqref{eq: inverselimit2} corresponds to a fiberwise fundamental class $[N]_b$ over every point $b \in BK.$ \end{rem}

We will make use of the following Lemma: 

\begin{lem} \label{lem: freeactions}(compare \cite[Proposition 8]{MR1614555}) Suppose that K acts freely on a finite $K$- CW complex $N$,  then there is a natural isomorphism :  \begin{align} \label{eq:integrationfibers} \hat{H}_*^K(N) \cong H_{*-\operatorname{dim}(K)}(N/K) \end{align}  \end{lem}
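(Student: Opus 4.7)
The plan is to exploit the fiber bundle $\pi_n \colon N_{borel,n} \to N/K$ with fiber $EK_n$, which arises because $K$ acts freely on the first factor of $N \times EK_n$. For simplicity I first treat the case where $N$ is a closed $K$-oriented manifold; the general case of a finite free $K$-CW complex will then follow by induction on $K$-cells via Mayer--Vietoris, with base case $N = K$ handled by a direct computation using the orientability of $EK_n$.

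When $N$ is such a manifold, $N_{borel,n}$ is a closed oriented manifold of dimension $\dim N + \dim BK_n$, and applying Poincar\'e duality on both $N_{borel,n}$ and $N/K$ produces
\begin{align*}
H_{\dim BK_n + *}(N_{borel,n}) \;\cong\; H^{\dim N - *}(N_{borel,n}) \;\cong\; H^{\dim N - *}(N/K) \;\cong\; H_{* - \dim K}(N/K),
\end{align*}
where the middle isomorphism uses that the fiber $EK_n$ has connectivity $c_n \to \infty$, so the Leray--Serre spectral sequence of $\pi_n$ forces $\pi_n^*$ to be an isomorphism in the relevant range once $n$ is large compared to $\dim N - *$ (a condition that is met for each fixed $*$ as $n$ grows, matching the stabilization of the inverse system noted after \eqref{eq: inverselimit}). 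Next I check that these finite-level identifications are compatible with the Gysin pull-backs defining the inverse limit. The normal bundle of $N_{borel,n} \subset N_{borel,n+1}$ descends from the $K$-equivariant normal bundle of $EK_n \subset EK_{n+1}$ and hence lies fiberwise over $N/K$. Consequently, the Gysin pull-back corresponds under the chain of isomorphisms above to the identity on $H^{\dim N - *}(N/K)$, and passing to the inverse limit gives $\hat{H}_*^K(N) \cong H_{*-\dim K}(N/K)$.

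The main technical obstacle I anticipate is precisely this compatibility check across the inverse system, which requires careful bookkeeping of orientation conventions and Thom classes. Conceptually, however, the argument is transparent: the fibration $N_{borel,n} \to N/K$ is a homological equivalence in the stable range because its fiber is highly connected, and Poincar\'e duality simultaneously absorbs the shift by $\dim BK_n$ and trades the top dimension of $N_{borel,n}$ for that of $N/K$, which produces the final shift by $\dim K$.
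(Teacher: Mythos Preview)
Your argument for the closed $K$-oriented manifold case via Poincar\'e duality is correct; it is exactly the ``integration over the fibers'' description that the paper records in the remark immediately following the lemma. For that case you and the paper agree.

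The gap is in your reduction of the general finite free $K$-CW complex to this case by Mayer--Vietoris. To run a five-lemma argument you need a \emph{natural transformation} between the functors $\hat{H}_*^K(-)$ and $H_{*-\dim K}(-/K)$, defined for \emph{all} finite free $K$-CW complexes and compatible with the Mayer--Vietoris connecting maps. Your isomorphism is built out of Poincar\'e duality on $N_{borel,n}$ and on $N/K$, which only makes sense when these are closed oriented manifolds; it does not directly furnish a map on the pieces of a $K$-cell decomposition (e.g.\ on $K\times D^m$ or on the overlaps). This is fixable---for instance, one can instead use the edge homomorphism of the Serre spectral sequence of $EK_n \to N_{borel,n}\to N/K$, since $EK_n$ is a closed oriented manifold and its top homology contributes the line $E_2^{p,\dim EK_n}=H_p(N/K)$---but you have not supplied such a map, and without it the inductive step does not go through.

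The paper avoids this issue by a genuinely different device, taken from Edidin--Graham. It replaces the compact model of $EK_n$ by the \emph{non-compact} Stiefel variety of full-rank $k\times n$ matrices, an open subset of a linear $K$-representation $\mathbf{V}_n$ whose complement has codimension tending to infinity with $n$. Switching to Borel--Moore homology, one may therefore replace $EK_n$ by all of $\mathbf{V}_n$ in the stable range. Because $K$ acts freely on $N$, the space $\mathbf{V}_n\times_K N$ is an oriented vector bundle over $N/K$, and a single application of the Thom isomorphism gives $H^{BM}_{\dim BK_n+*}(\mathbf{V}_n\times_K N)\cong H_{*-\dim K}(N/K)$. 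This works uniformly for any finite $K$-CW complex $N$: no Poincar\'e duality on $N$ or $N/K$, and no induction, is required.
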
 
\begin{proof} We follow the proof of \cite[Proposition 8]{MR1614555} (which is stated for Chow groups but works for ordinary homology).  Namely,  after fixing an embedding $\rho: K \hookrightarrow U(k)$,  one can instead take the ``non-compact" Stiefel  manifold of full rank complex $k \times n$ matrices as a model for $EU(k)_n$ (see \cite[Section 3.1]{MR1614555}).  This sits inside the vector space $\mathbf{V}_n$  of all $k \times n$ matrices (viewed as a $K$-representation) and the complement $\mathbf{V}_n \setminus EU(k)_n$ has increasing co-dimension with $n$.  Working with non-compact models means that we should replace the levels of \eqref{eq: inverselimit}   with $H_{\operatorname{dim} BK_n +\ast}^{BM}(N_{borel,n})$ where $H_{\operatorname{dim} BK_n +\ast}^{BM}$ is the Borel-Moore homology and $\operatorname{dim} BK_n$ is the real dimension of this new model for $BK_n$ (compare \cite[page 79]{MR1786481} or \cite[page 601]{Graham} ; this can also be seen by retracing through \eqref{eq:finiteapproxiso}).   If we fix a degree,  then we have that for $n$ large enough,  \begin{align*}H_{\operatorname{dim} BK_n +\ast}^{BM}(N_{borel,n}) \cong H_{\operatorname{dim} BK_n +\ast}^{BM}(\mathbf{V}_n \times_K N). \end{align*} Finally,  because $K$ acts freely on $N$,  $\mathbf{V}_n \times_K N$ is an oriented vector bundle over $N/K$. So by the Thom-isomorphism,  we have that $$ H_{\operatorname{dim} BK_n +\ast}^{BM}(\mathbf{V}_n \times_K N) \cong H_{*-\operatorname{dim}(K)}(N/K).$$ This concludes the proof.      \end{proof}

\begin{rem} \label{lem: freeactions2} Let $K$ be an arbitrary compact,  connected Lie group which acts freely on a compact $K$-oriented smooth manifold $N$.  Then it follows from examining \eqref{eq: inverselimit2} that there is a natural ``integration over the fibers" isomorphism as in \eqref{eq:integrationfibers}.  It is easy to see that this produces the same isomorphism as Lemma \ref{lem: freeactions}.
  \end{rem}

\begin{example} Let us spell out some basic examples: 

 \begin{itemize} \item The basic case where $N=pt$ already exhibits some noteworthy features of these groups.  In view of\eqref{eq: inverselimit2},  we have that $\hat{H}_*^K(pt)=H^{-*}(BK).$ Notice that this is concentrated in non-positive degrees.  It also carries a natural multiplication,  unlike the usual equivariant homology $H_*(BK).$ It is also useful to note that $\hat{H}_*^K(pt)$ is much better behaved than $H_*(BK)$ as module over $H^{-*}(BK).$ For simplicity, we work over a field of characteristic zero $\mathbf{k}.$ Then  $$ H_*(BK) \cong \operatorname{Hom}_{\mathbf{k}}(H^{-*}(BK), \mathbf{k})$$ as a module over $H^{-*}(BK).$ Hence $H_*(BK)$ is a torsion module and infinitely generated,  while $\hat{H}_*^K(pt)$ is free of rank one.   
\item Let $N=K$ with $K$ acting by (left-) multiplication.  Then we have that $\hat{H}_*^K(K)=\mathbb{Z}[-\operatorname{dim}(K)]$ (so it is concentrated in homological degree $\operatorname{dim}(K)).$   
\end{itemize} 
 \end{example}

 Let us say that a $K$ CW-complex is locally finite if it has a finite number of cells in each dimension.  For general locally finite $K$ CW-complexes,  let $N_{\leq d}$ denote the union of cells of dimension less than $d.$ We set 
\begin{align} 
\hat{H}_\ast^{K}(N) := \varinjlim_d \hat{H}_{\ast}^K(N_{\leq d}) \label{eq:directlim}
\end{align} This is an invariant under $K$-equivariant homotopy equivalences by the $K$-equivariant cellular approximation theorem \cite[Theorem II.2.1]{tDieck}.  
More generally or any $K$-equivariant map $f: N \to N'$,  there is a pushforward map $$f_*: \hat{H}_{\ast}^K(N) \to  \hat{H}_{\ast}^K(N'). $$  
For a $K$-topological space $N$ that is $K$-equivariantly homotopy equivalent to a locally finite $K$ CW-complex $N_{CW}$,  $\hat{H}_{\ast}^K(N)$ is defined to be $\hat{H}_{\ast}^K(N_{CW})$.
These homology groups behave contravariantly (like equivariant cohomology) with respect to subgroups.  For any inclusion $i: H \subset K$,  we have a map $$i_{H \subset K}^*: \hat{H}_{\ast}^K(N) \to  \hat{H}_{\ast}^H(N). $$

\begin{rem} In some cases,  we will need to consider disjoint unions of locally finite $K$ CW-complexes.  We extend the theory to these cases by taking the direct sum over all connected components.   \end{rem}

\subsection{ $\hat{H}_*^{\hat{T}}(LG/T)$ and convolution product} \label{sec:conv}

We recommend \cite{OblomkovCarlsson} as a reference for the material in this section.  Let us begin by introducing some Lie theoretic notation that we will use throughout this paper: \begin{itemize} \item $G$ is a compact,  connected Lie group 
\item $T \subset G$ is a maximal torus and $\frak{t}$ its Lie algebra.  \item $N_G(T)$ is the normalizer of this maximal torus in $G$ \item $W$ is the Weyl group $N_G(T)/T$.   \item $\mathcal{X}(T)$ is the lattice of homomorphisms $S^1 \to T$ (co-character lattice)   \item $\tilde{W}$ is the affine Weyl-group $W \rtimes \mathcal{X}(T).$ \item $\hat{T}:= S^1 \times T$ and $\mathcal{X}^{\vee}(\hat{T})$ is the lattice of homomorphisms $\hat{T} \to S^1$ (character lattice).
\item Let $R:= H^{-*}(B\hat{T};\bk)$ and $F_R$ its field of fractions.
\item $LG$ is the space of smooth maps $S^1 \to G$
 \end{itemize} 

Choose a generator $y$ of $H^2(BS^1)$.  For any character $\sigma^{\vee} \in \mathcal{X^{\vee}}(\hat{T})$,  we have a map: $$B\sigma^{\vee}: B\hat{T} \to BS^1$$ giving rise to a canonical element $(\sigma^{\vee})^*(y):=(B\sigma^{\vee})^*(y) \in H^2(B\hat{T}).$  This correspondence in turn gives rise to an identification:  \begin{align} \label{eq:symid} \operatorname{Sym}(\mathcal{X}^{\vee}(\hat{T})) \cong R.  \end{align} 

It is now time to introduce one of the main examples that we wish to consider in this paper.  For our space,  we consider the smooth affine flag variety $N:=LG/T.$  This admits an action of $K:= \hat{T},$  where $T$ acts by left translation on
\begin{align} \label{eq:conjug} T\times  LG/T \to LG/T \\ t \cdot [\gamma(t)] = [t \cdot \gamma(t)].  \nonumber
\end{align} 
and $S^1$ acts by loop rotation
\begin{align} \label{eq:loopy} S^1 \times LG/T \to
  L G/T  \\ a \cdot [\gamma(t)] =
  [\gamma(t-a)].  \nonumber 
\end{align} 

Let $L_{poly}G$ denote the space of polynomial loops in $G$.  These are the loops $S^1 \to G$,  which extend to an algebraic map $\mathbb{C}^* \to G_{\mathbb{C}}$,  where $G_\mathbb{C}$ is the complexification of $G$ (\cite[3.5]{PS86} or \cite[Definition 2.1]{Atiyah-Pressley}).  The inclusion $L_{poly}G \hookrightarrow LG$ is clearly $S^1 \times T \times T$ equivariant.  Moreover,  we have the following well-known fact:
\begin{lem} The inclusion $L_{poly}G \hookrightarrow LG$ is a homotopy equivalence.   \end{lem}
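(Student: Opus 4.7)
The plan is to reduce the lemma to the analogous statement for based loops, $\Omega_{poly}G \hookrightarrow \Omega G$, which is a classical result of Pressley--Segal \cite{PS86}. Evaluation at the basepoint $1 \in S^1$ gives a fibration $\ev\colon LG \to G$ with fiber $\Omega G$, together with a canonical section by constant loops. Since constant loops in $G$ are polynomial (they extend to constant algebraic maps $\mathbb{C}^* \to G_{\mathbb{C}}$), the restriction $\ev_{poly}\colon L_{poly}G \to G$ is also a fibration, with fiber $\Omega_{poly}G$ and the same constant-loop section. Consequently, the inclusion $L_{poly}G \hookrightarrow LG$ fits into a morphism of fiber sequences over the identity on $G$, with fiber map $\Omega_{poly}G \hookrightarrow \Omega G$.

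Granting the based-loop statement, the five-lemma applied to the long exact sequences of homotopy groups shows that $L_{poly}G \hookrightarrow LG$ induces isomorphisms on all $\pi_*$. Both spaces have the homotopy type of CW-complexes (the polynomial side via its affine Schubert stratification indexed by $\tilde{W}$, the smooth side as a mapping space into a finite-dimensional manifold), so Whitehead's theorem upgrades this to a genuine homotopy equivalence. The Pressley--Segal proof of the based-loop case itself proceeds by choosing a faithful unitary embedding $G \hookrightarrow U(n)$, identifying $\Omega U(n)$ with the restricted Grassmannian of a polarized Hilbert space, and exhibiting $\Omega_{poly}U(n)$ inside this model as an increasing union of finite-dimensional Schubert varieties that exhausts the homotopy type; the general $G$-case follows by intersecting with the evident $G$-valued condition.

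The one substantive point I expect to spend time checking---and the main potential obstacle---is ensuring that the direct-limit topology on $L_{poly}G$ coming from its exhaustion by finite-dimensional algebraic strata is genuinely compatible with the ambient $C^\infty$ topology on $LG$, in a way that makes $\ev_{poly}$ a bona fide fibration with the claimed fiber $\Omega_{poly}G$. Once that compatibility is established, the argument above is essentially formal, and the remainder is an appeal to the Pressley--Segal machinery.
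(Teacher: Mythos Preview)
Your reduction to the based-loop statement via the evaluation fibration, five-lemma, and Whitehead is fine and matches the paper's first move (the paper just asserts that it ``clearly suffices'' to pass to $\Omega_{poly}G \hookrightarrow \Omega G$). The technical worry you flag about the direct-limit topology is not something either you or the paper actually needs to confront once you have reduced to the Pressley--Segal statement.

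The genuine gap is in your appeal to \cite{PS86}. Proposition~8.6.6 there only treats the case where $G$ is \emph{semisimple}; your sentence ``the general $G$-case follows by intersecting with the evident $G$-valued condition'' is exactly the step that is not in the literature and is not obviously correct. If $G$ has a torus factor, the Bruhat/Grassmannian machinery you sketch does not directly hand you the result by restriction from $U(n)$. The paper deals with this by a different, elementary argument: it invokes the structure theorem $G \cong (G' \times T_0)/Z$ with $G'$ semisimple, $T_0$ a torus, and $Z$ finite central; observes that the torus case is trivial (both $\Omega_{poly}T_0$ and $\Omega T_0$ have the homotopy type of the cocharacter lattice); uses that $\Omega_{poly}$ and $\Omega$ respect products to get the result for $G' \times T_0$; and then uses covering-space theory, choosing polynomial representatives (namely cocharacters of $T/Z$) for each $z \in Z$, to descend through the finite central quotient.

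So your overall scaffolding is right, but you should replace the hand-wave to \cite{PS86} for general $G$ with this structure-theorem reduction.
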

\begin{proof} The main reference (\cite[Proposition 8.6.6]{PS86}) only explicitly addresses the case where $G$ is semi-simple (which is the most difficult part).  So,  we explain the easy argument which allows one to pass to the general case.  For notational simplicity,  note that it clearly suffices to prove the corresponding result on based loop spaces at the identity,  $\Omega_{poly} G \simeq  \Omega G.$  Observe first that the statement is trivial when $G=T_0$ is a torus (c.f.  the discussion just before \cite[Proposition 3.5.3]{PS86}).  Because $\Omega_{poly}$ and $\Omega $ are both compatible with products,  it follows that for any semisimple group $G'$,  $\Omega_{poly}(G' \times T_0) \simeq \Omega(G' \times T_0).$ 

Finally,  note that any compact connected Lie group $G$ is of the form $(G' \times T_0)/Z$ where $G'$ and $T_0$ are as before and $Z$ is a finite,  central subgroup (and thus contained in the maximal torus $T \subset G' \times T_0$).  By basic covering space theory,  $\pi_Z: \Omega(G' \times T_0) \to \Omega G$ maps homeomorphically onto its image and its image consists of those loops which lift to closed loops.  Moreover,  for any $z \in Z$,  let $(\Omega G)_z $ denote the subspace of based loops which lift to a path from the identity to $z.$ $(\Omega G)_z$ can be identified with $\pi_Z(\Omega (G' \times T_0))$ by choosing any $\gamma_z \in (\Omega G)_z$ and applying pointwise multiplication with $\gamma_z.$  We choose $\gamma_z$ by considering the induced covering of maximal tori $T \to T /Z$ and taking any co-character $S^1 \to T/Z$ which lifts to a path ending at $z$.  This is clearly polynomial so that the above identification respects polynomial inclusions.  As $\Omega G$ is a disjoint union of these  $(\Omega G)_z$,  this concludes the proof.   \end{proof} 
 Next,  consider the algebraic affine flag variety,  which is the quotient space $L_{poly}G/T \subset LG/T$.  The fixed points of the remaining $S^1 \times T$ action on $L_{poly}G/T$ are given by points of the form $\sigma [w]$,  where $\sigma \in \mathcal{X}(T)$ and  $w \in N_G(T)$ so $[w] \in W$ is a Weyl element.  They are thus canonically in bijection with elements of the affine Weyl group $\tilde{W}$.
  There is a cell decomposition (\cite[Section 8.7]{PS86}) \begin{align*} L_{poly}G/T = \cup_{\sigma[w] \in \tilde{W}} S_{\sigma[w]} \end{align*} where $S_{\sigma[w]}$ are finite dimensional cells corresponding to these fixed points.  The closures of $S_{\sigma[w]}$,  $\bar{S}_{\sigma[w]}$ are singular algebraic varieties (``affine Schubert varieties"),  that admit $\hat{T}$-equivariant resolutions $\mathcal{BS}_{\sigma[w]} \to \bar{S}_{\sigma [w]}$.  We have an additive isomorphism: 
\begin{align} 
\hat{H}_{*}^{\hat{T}}(LG/T) \cong \hat{H}_{*}^{\hat{T}}(L_{poly}G/T) \cong \bigoplus_{\sigma[w] \in \tilde{W}} R \cdot [\mathcal{BS}_{\sigma[w]}]_{\hat{T}} \label{eq:BottSam}
\end{align} 
where as before $[\mathcal{BS}_{\sigma[w]}]_{\hat{T}}$ denotes the $\hat{T}$-equivariant fundamental classes of these manifolds.  

We next describe a convolution product: \begin{align} \label{eq: hatmaff} \hat{m}: \hat{H}_*^ {\hat{T}}(LG/T) \otimes \hat{H}_*^{\hat{T}}(LG/T) \to \hat{H}_*^{\hat{T}}(LG/T)  \end{align} which turns $\hat{H}_*^ {\hat{T}}(LG/T)$ into an associative algebra.   To do this,  the first step is to notice that by a variant of Lemma \ref{lem: freeactions},  we have a natural identification:
\begin{align} \label{eq:anotherfree1} \hat{H}_{*+dim(T)}^{\hat{T} \times T}(LG) \cong \hat{H}_*^{\hat{T}}(LG/T). \end{align} 

Let $X$ be another space with an $\hat{T}$ action and let $LG \times_T X$ be the quotient of $LG \times X$ by the diagonal $T$-action (given by right multiplication on $LG$ and the $T$-action on $X$.)  There is an action of $\hat{T}$ on $LG \times_T X$ given by taking the diagonal $S^1$-action and allowing $T$ to act by left multiplication on the first factor.  We have that: 

\begin{lem} \label{lem:kunneth}   There is a Kunneth map: 
 \begin{align} \label{eq:kunnethR} \cQ: \hat{H}_{*}^{\hat{T}}(LG/T)\otimes_{R}  \hat{H}_{*}^{\hat{T}}(X) \to \hat{H}_{*}^{\hat{T}}(LG \times_T X) \end{align} where the $R$-module structure on $\hat{H}_{*}^{\hat{T}}(LG/T)$ is induced by \eqref{eq:anotherfree1} together with right-multiplication on $LG$.  \end{lem}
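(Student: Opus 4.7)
The plan is to build $\cQ$ as a composition of three natural maps: an external Kunneth product, a restriction to a diagonal subgroup, and the free-quotient identification established in Remark \ref{lem: freeactions2}.

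First I construct an external Kunneth product
$$\hat{H}^{\hat{T}\times T}_*(LG) \otimes \hat{H}^{\hat{T}}_*(X) \longrightarrow \hat{H}^{(\hat{T}\times T) \times \hat{T}}_*(LG \times X).$$
The cleanest construction uses the dg-local system description \eqref{eq:lsapproach}. One has $\underline{C_*(LG \times X)} \simeq \underline{C_*(LG)} \otimes \underline{C_*(X)}$ as a dg-local system over $B(\hat{T}\times T) \times B\hat{T} \simeq B((\hat{T}\times T) \times \hat{T})$, and combining this with the standard Kunneth quasi-isomorphism
$$C^{-*}(BK_1; E_1) \otimes C^{-*}(BK_2; E_2) \to C^{-*}(BK_1 \times BK_2; E_1 \otimes E_2)$$
produces the required map on $H_*$. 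Equivalently, one may build the external product level-wise through the finite-dimensional approximations of Section \ref{section: BMH}, applying the ordinary topological Kunneth to $(N_1)_{borel,n} \times (N_2)_{borel,n}$ and verifying compatibility with the Gysin restriction maps of \eqref{eq: inverselimit}.

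Next, let $H \subset (\hat{T}\times T) \times \hat{T}$ be the subgroup which is diagonal in the two $S^1$-factors and antidiagonal in the right-$T$-factor on $LG$ and the $T$-factor of the second $\hat{T}$; explicitly, writing $(\hat{T}\times T) \times \hat{T} = S^1_{\mathrm{loop}} \times T_{\mathrm{left}} \times T_{\mathrm{right}} \times S^1 \times T$, one has $H = \{(s,\tau,t,s,t^{-1})\} \simeq \hat{T}\times T$. The antidiagonal choice is made so that the remaining $T$ acts \emph{freely} on $LG \times X$ via $t \cdot (g,x) = (gt, t^{-1}\cdot x)$, with quotient precisely $LG \times_T X$. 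Contravariance of $\hat{H}^\bullet_*$ in the group, together with Remark \ref{lem: freeactions2}, then gives
$$\hat{H}^{(\hat{T}\times T)\times \hat{T}}_*(LG \times X) \longrightarrow \hat{H}^{\hat{T}\times T}_*(LG \times X) \xrightarrow{\cong} \hat{H}^{\hat{T}}_{*-\dim T}(LG \times_T X).$$
Composing this with the external product and with \eqref{eq:anotherfree1} on the $LG$-factor defines $\cQ$.

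It remains to verify that $\cQ$ descends to the tensor product over $R$. The $R_{S^1}$-bilinearity is automatic: the restriction above identifies the two $S^1$-factors of $(\hat{T}\times T) \times \hat{T}$, so the two $R_{S^1}$-actions coincide on the restricted homology. For $R_T$, the antidiagonal identification equates the $R_T$-action arising from right-multiplication on $LG$ (which is precisely the $R_T$-structure used in the statement of the lemma) with the $R_T$-action coming from the $T$-equivariance on $X$, which is exactly the required bilinearity. The main technical point I expect is Step 1, namely verifying that the external Kunneth map is genuinely compatible with the Gysin restriction structure of \eqref{eq: inverselimit} and with the direct-limit structure \eqref{eq:directlim} when $X$ is only locally finite, so that a well-defined map on semi-infinite homology is produced; once this is in place, the remainder of the argument is essentially formal.
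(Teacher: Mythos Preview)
Your proposal follows essentially the same route as the paper: external Kunneth on $LG\times X$ for the product group, restriction to a diagonal-type subgroup $H\cong S^1\times T^2$, then the free-quotient identification to land in $\hat{H}_*^{\hat{T}}(LG\times_T X)$. The apparent discrepancy between your ``antidiagonal'' $(t,t^{-1})$ and the paper's diagonal $(g_2,g_2)$ is only a sign convention --- in the paper the right $T$-factor already acts on $LG$ by $h\cdot\gamma=\gamma h^{-1}$, so the two subgroups give the same action on $LG\times X$.

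One small correction: you invoke Remark~\ref{lem: freeactions2} for the integration-along-fibres step, but that remark is stated for \emph{compact smooth manifolds}, which $LG\times X$ is not. The paper instead appeals to (a variant of) Lemma~\ref{lem: freeactions}, which is formulated for finite $K$-CW complexes and then extended to the locally finite case via the direct limit \eqref{eq:directlim}; you should do the same.
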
   
\begin{proof}  Consider the product $LG \times X $ which admits an action by $(S^1)^2 \times T^3.$ Let $H_\Delta \subset (S^1)^2 \times T^3$ denote the subgroup,  isomorphic to $S^1 \times T^2$,  consisting of elements of the form $(a,a) \times (g_1,g_2,g_2) \in (S^1)^2 \times T^3.$ Composing the Kunneth map together with restriction to this subgroup gives a natural map: 
\begin{align} \label{eq:mult1} \hat{H}_{*+dim(T)}^{S^1\times T^2}(LG) \otimes_R \hat{H}_{*}^{\hat{T}}(X) \to \hat{H}_{*+dim(T)}^{H_{\Delta}}(LG \times X) \end{align}

$LG \times_T X$ is the quotient of $ LG \times X$ by the subgroup (isomorphic to $T$) of $T^3$ consisting of elements of the form $(1, g,g) \in T^3$.  The integration along the fibers map from Lemma \ref{lem: freeactions} gives a map: 
\begin{align} \label{eq:mult2} \hat{H}_{*+dim(T)}^{H_{\Delta}}(LG \times X)  \to \hat{H}_{*}^{\hat{T}}(LG \times_T X).  \end{align} The map  \eqref{eq:kunnethR} is the composition of these two maps after making the identification \eqref{eq:anotherfree1} in the source.  \end{proof} 

\begin{rem} Using the fact that $\hat{H}_{*}^{S^1 \times T \times T}(LG)$ is free over $R$,  one can actually argue that \eqref{eq:kunnethR} is an isomorphism.  However,  we will not make use of this stronger statement.   \end{rem}

With this in place,  note that there is an $\hat{T}$ equivariant map \begin{align}\label{eq:pointmult} m_{LG}: LG \times_T (LG/T) \to LG/T \end{align} given by pointwise multiplication in $LG$.  Composing $m_{LG,*}$ with \eqref{eq:kunnethR} for $X=LG/T$ gives the desired multiplication \eqref{eq: hatmaff}.  After tensoring with the fraction field $F_R$,  this algebra admits a simple description.  Consider the free $F_R$ module with basis indexed by $\sigma[w] \in \tilde{W}:$ \begin{align*} \mathcal{N}:= \bigoplus_{\sigma[w] \in \tilde{W}} F_R \cdot e_{\sigma[w]}.  \end{align*}


 We construct an associative algebra structure $(\mathcal{N},  \ast_{fp})$ on $\mathcal{N}$ as follows.   Consider the semi-direct product group $S^1 \ltimes LG$,  where $S^1$ acts by loop-rotation\footnote{Here this is a right action by positive rotation $\gamma(t)\cdot a=\gamma(t+a)$.} and let $N(\hat{T})$ denote the normalizer of $\hat{T} \subset S^1 \ltimes LG$.  Recall that the affine Weyl group $\tilde{W}$ admits an alternative description as the quotient $N(\hat{T})/ \hat{T}$ (\cite[Section 5.1]{PS86}).  Explicitly,  the group $\tilde{W}$ acts on the torus $\hat{T}$ by the automorphisms: 
\begin{align*}
\mathcal{A}_{\tilde{w}}: \tilde{W} \times \hat{T} &\to  \hat{T} \\
\mathcal{A}_{[w]\sigma}(a,t)&=(a,  w\sigma(a)t w^{-1})
\end{align*}
where $[w] \in W$, $\sigma \in \mathcal{X}(T)$ and $(a,t) \in S^1 \times T = \hat{T}$.  It therefore acts on $x^{\vee} \in \mathcal{X}^{\vee}(\hat{T})$ by the dual action \begin{align} \label{eq:actfun} \mathcal{A}_{\tilde{w}}(x^{\vee})(\hat{t}) = x^{\vee} \circ \mathcal{A}_{\tilde{w}^{-1}}(\hat{t}) \end{align} and thus,  in view of \eqref{eq:symid},  the rings $R$ and $F_R$.  The multiplication law on $\mathcal{N}$ is then determined by: \begin{align} \label{eq:multipbasis} (f_1e_{\tilde{w}_{1}}) \ast_{fp} (f_2e_{\tilde{w}_{2}})= f_1 (\mathcal{A}_{\tilde{w}_1}(f_2))(e_{\tilde{w}_{1}\tilde{w}_{2}}) \end{align}  where $f_1, f_2 \in F_R$ and $\tilde{w}_1:=\sigma_1[w_1], \tilde{w}_2:=\sigma_2[w_2] \in \tilde{W}$(see e.g. \cite[\S 4.1]{OblomkovCarlsson} or \cite[Chapter 4,  eq.  (3.2)]{lametal}). By a version of Atiyah-Bott localization,  the base change $F_R \otimes_{R} \hat{H}_*^{\hat{T}}(LG/T)$ admits an alternative basis: \begin{align} \label{eq:alternativebasis} F_R \otimes_{R} \hat{H}_*^{\hat{T}}(LG/T) \cong \bigoplus_{\sigma[w] \in \tilde{W}} F_R \cdot [\sigma[w]]_{\hat{T}} \end{align}
where $\sigma[w] \in LG/T$ is the fixed point and $[\sigma[w]]_{\hat{T}}$ is the fundamental equivariant class.  It is elementary to check: 

\begin{lem} There is an embedding of $R$-algebras $$ \hat{H}_{*}^{\hat{T}}(LG/T) \hookrightarrow (\mathcal{N},  \ast_{fp}) $$ which sends $[\sigma[w]]_{\hat{T}} \to e_{\sigma[w]}$ and which becomes an additive isomorphism after tensoring with $F_R.$ \end{lem}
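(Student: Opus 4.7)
The plan is to separate the statement into three pieces: (i) injectivity of the map on the integral form $\hat{H}_{*}^{\hat{T}}(LG/T) \to F_R \otimes_R \hat{H}_{*}^{\hat{T}}(LG/T)$; (ii) the additive isomorphism of the localization with $\mathcal{N}$; and (iii) the multiplicativity of the fixed point basis assignment $[\sigma[w]]_{\hat{T}} \mapsto e_{\sigma[w]}$. Parts (i) and (ii) are essentially formal. For (i), the Bott--Samelson decomposition \eqref{eq:BottSam} realizes $\hat{H}_{*}^{\hat{T}}(LG/T)$ as a free $R$-module, so the localization map is injective. For (ii), the displayed Atiyah--Bott basis \eqref{eq:alternativebasis} of fixed point classes provides exactly the $F_R$-basis $\{e_{\sigma[w]}\}$ of $\mathcal{N}$; defining the map by $[\sigma[w]]_{\hat{T}} \mapsto e_{\sigma[w]}$ gives an $F_R$-linear isomorphism tautologically.

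The heart of the argument is (iii), verifying that under this assignment the convolution product $\hat{m}$ from \eqref{eq: hatmaff} matches $\ast_{fp}$ from \eqref{eq:multipbasis}. After base change to $F_R$, both algebras are free on the fixed points, so it suffices to compute $[\sigma_{1}[w_{1}]]_{\hat{T}} \ast [\sigma_{2}[w_{2}]]_{\hat{T}}$. The Kunneth map \eqref{eq:kunnethR} applied to the two fixed-point classes yields (up to an Euler-class factor supported away from the relevant fixed point) the class of the fixed point $(\sigma_{1}[w_{1}], \sigma_{2}[w_{2}]) \in LG \times_T LG/T$. Pushing forward under the pointwise multiplication map $m_{LG}$ of \eqref{eq:pointmult} lands on the fixed point $\sigma_{1}[w_{1}]\sigma_{2}[w_{2}] \in LG/T$, whose class in $\tilde{W}$ is $\tilde{w}_{1}\tilde{w}_{2}$. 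This reproduces the group-theoretic multiplication on basis vectors.

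The main obstacle, and the one requiring care, is tracking the $R$-module twist: a general element of $\hat{H}_{*}^{\hat{T}}(LG/T)$ is an $R$-linear combination of fixed point classes, and the convolution is only $R$-linear in its first argument. The twist by $\mathcal{A}_{\tilde{w}_{1}}$ in \eqref{eq:multipbasis} arises because, when we thread an element $f \in R$ through the Kunneth/convolution construction in the second slot, the $\hat{T}$-action on the second $LG/T$ factor is effectively conjugated by the first factor $\tilde{w}_{1} \in N(\hat{T})/\hat{T}$. Concretely: the $T$-action on $X=LG/T$ enters \eqref{eq:mult1}--\eqref{eq:mult2} via the diagonal subgroup of $T^{3}$, and after pushing forward by $m_{LG}$ the cohomological parameter associated to that $T$-factor is identified with the one given by the left $T$-action precomposed with $\mathrm{Ad}_{\tilde{w}_{1}}$. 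The resulting action on $\mathcal{X}^{\vee}(\hat{T}) \cong H^{2}(B\hat{T})$ is precisely \eqref{eq:actfun}, yielding $f_{1}\mathcal{A}_{\tilde{w}_{1}}(f_{2})\,e_{\tilde{w}_{1}\tilde{w}_{2}}$.

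Having verified multiplicativity on the $F_R$-level, the integral statement follows: since $\hat{H}_{*}^{\hat{T}}(LG/T)$ sits inside its localization as a free $R$-submodule, and since the localization map is a ring homomorphism by the compatibility of $\hat{m}$ with the base change, the composition $\hat{H}_{*}^{\hat{T}}(LG/T) \hookrightarrow F_R \otimes_R \hat{H}_{*}^{\hat{T}}(LG/T) \xrightarrow{\sim} \mathcal{N}$ is an injective $R$-algebra homomorphism, as claimed.
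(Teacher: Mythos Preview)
The paper does not supply a proof of this lemma; it simply prefaces the statement with ``It is elementary to check.'' Your proposal correctly fleshes out this elementary check along the expected lines: freeness over $R$ via the Bott--Samelson basis \eqref{eq:BottSam} gives injectivity of the localization map, the Atiyah--Bott fixed-point basis \eqref{eq:alternativebasis} provides the additive isomorphism with $\mathcal{N}$, and multiplicativity is verified by tracking the two fixed points through the Kunneth map \eqref{eq:kunnethR} and the pushforward along $m_{LG}$. Your identification of the twist $\mathcal{A}_{\tilde{w}_1}$ --- coming from the fact that the $\hat{T}$-action on the fiber of $LG \times_T (LG/T)$ over the fixed point $\tilde{w}_1$ is the standard action precomposed with the automorphism $\mathcal{A}_{\tilde{w}_1^{-1}}$ of $\hat{T}$ --- is exactly the content of the verification and matches the module twist that the paper later spells out around \eqref{eq:twistedR}--\eqref{eq:Kunneth2}. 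One minor remark: the parenthetical about an Euler-class factor in the Kunneth step is unnecessary (the Kunneth image of two equivariant point classes is the equivariant point class of the pair on the nose), but it does no harm.
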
  

\begin{rem} The subalgebra $R \subset \hat{H}_{*}^{\hat{T}}(LG/T)$ is a multiplicatively-closed subset which satisfied the left (or alternatively right) Ore condition and $(\mathcal{N},  \ast_{fp})$ is the localization by this set.  However,  we shall not make use of this fact.   \end{rem} 

\section{Shift operators}\label{s:Seidelmor}

Let $(M,\omega)$ be a positively monotone compact symplectic manifold equipped with a Hamiltonian $G$-action.
Let $\Lambda=\mathbf{k}[q,q^{-1}]$ be the Laurent polynomial ring and we define the grading of $q$ to be $2$.  As a vector space over $\bk$, $QH^*_{\hat{T}}(M)$ (resp. $QH^*_{T}(M)$) is defined to be $H^*_{\hat{T}}(M) \otimes_{\bk} \Lambda$ (resp. $H^*_{T}(M) \otimes_{\bk} \Lambda$), where $S^1$ acts trivially on $M$.  Using Poincare duality \eqref{eq: inverselimit2}, we can identify it with $\hat{H}_{\dim(M)-*}^{\hat{T}}(M) \otimes_{\bk} \Lambda$ (resp. $\hat{H}_{\dim(M)-*}^{T}(M) \otimes_{\bk} \Lambda$) where $q$ is sent to $q^{-1}$.  In this and the next section, we will only use the vector space structure of $QH^{\dim(M)-*}_{\hat{T}}(M)$ (resp. $QH^{\dim(M)-*}_{T}(M)$) so  $\hat{H}_*^{\hat{T}}(M)[q^{\pm 1}]:= \hat{H}_*^{\hat{T}}(M) \otimes_{\bk} \Lambda$ (resp. $\hat{H}_*^{T}(M)[q^{\pm 1}]:= \hat{H}_*^{T}(M) \otimes_{\bk} \Lambda$) is used interchangeably with it. We denote by $u$ the $S^1$ equivariant parameter.

We are going to construct the equivariant Seidel maps
\begin{align}
&\cS: \hat{H}_*^{\hat{T}}(LG/ T) \otimes  \hat{H}_*^{\hat{T}}(M)[q^{\pm 1}] \to \hat{H}_*^{\hat{T}}(M)[q^{\pm 1}] \label{eq:Sdefn1}\\
&\cS^{u=0}: \hat{H}_*^{T}(LG/T) \otimes  \hat{H}_*^{T}(M)[q^{\pm 1}] \to \hat{H}_*^{T}(M)[q^{\pm 1}]\label{eq:Sdefn2}
\end{align}
using parametrized $2$-pointed Gromov-Witten invariants in the universal Seidel spaces.
The properties of this map will be discussed in later sections.

\subsection{Overview of the construction}\label{ss:overviewSeidelMap}

Let $D^2_0$ and $D^2_{\infty}$ be two copies of the closed unit disc, with standard coordinates \(re^{2\pi i \theta},\ \ 0\leq r\leq 1, 0\leq \theta \leq 1 \).  Let $U$ be the following space \begin{align} U:= LG \times M \times D^2_0 \bigsqcup  L G \times M \times D^2_{\infty} / \sim \label{eq:SeidelCoordinates} \end{align} where the equivalence relation is given by $(\gamma, x, e^{2\pi i \theta})_0 \sim (\gamma, \gamma(\theta)(x), e^{2\pi i \theta})_\infty$. That is, we are gluing over the boundary of \(D\), using the loop $\gamma:S^1\to G$ and the \(G\) action on $M$. Note that the fibre \(U_\gamma\) over an element \(\gamma\in LG\) recovers the original Seidel space construction associated to \(\gamma\). Thus, the projection $\pi_U: U \to L G$ makes $U$ a Seidel space
bundle over $L G$.

Define an $\hat{T}\times T = S^1 \times T \times T $ action on the charts 
\begin{align} (\tau,g,h) \cdot (\gamma, x, re^{2\pi i
\theta})_{0}&=(g\gamma(\cdot- \tau)h^{-1},  hx, re^{2\pi i
(\theta +\tau)})_{0}  \label{eq:LGactions1}\\ (\tau,g,h) \cdot (\gamma, x,
re^{2\pi i \theta})_{\infty}&=(g\gamma(\cdot- \tau)h^{-1},  gx,
re^{2\pi i (\theta +\tau)})_{\infty} \label{eq:LGactions2}\end{align}
The action is compatible with the gluing and thus it is well
defined on \(U\). To see this, note that on the \(0\)-chart,
on the boundary \(r=1\), we have 
\begin{equation*} (\tau,g, h) \cdot (\gamma, x, e^{2\pi i
  \theta})_0=(g\gamma(\cdot -\tau) h^{-1},
  hx, e^{2\pi i (\theta +\tau)})_0.
\end{equation*} After gluing the second entry in the
\(\infty\) chart is \(g\gamma(\theta+\tau -\tau)
h^{-1} hx\) and thus

\begin{equation*}
(g\gamma(\cdot -\tau)h^{-1},
g\gamma(\theta)(x), e^{2\pi i (\theta
+\tau)})_\infty=(\tau,g,h) \cdot  (\gamma, \gamma_{\theta}(x),
e^{2\pi i \theta})_\infty,
\end{equation*}
%
showing compatibility. 

Let $U/T$ be the quotient of $U$ by the last $T$-action of $\hat{T} \times T$.
It is a free quotient and we have an induced Seidel space bundle $\pi_{U/T}: U/T \to LG/T$
as well as the induced $\hat{T}$ action on $U/T$.

Consider the $\hat{T}$-invariant subspaces of $U/T$
\begin{align}
S_{T,0}&=(L G \times M \times \{0\})/T \subset (L G
\times M \times D^2_0)/T \label{eq:ST0}\\
S_{T,\infty}&=(L G \times M \times \{0\})/T \subset (L G \times M \times D^2_{\infty})/T \label{eq:STinf}
\end{align}
and let  $LG \times_T M$ denote the quotient of  $LG
\times M$ by the relation   $(\gamma,x) \sim (\gamma g^{-1},gx)$ for $g \in T$.
The subspace $S_{T,0}$ is $\hat{T}$-equivariantly isomorphic to $L G \times_T M$ with the action
\begin{align} \label{eq: Taction0}
(\tau,g) \cdot [\gamma, x]=[g\gamma(\cdot -\tau),  x]
\end{align}
On the other hand, $S_{T,\infty}$ is $\hat{T}$-equivariantly isomorphic to $(L G/T) \times M$ with the action
\begin{align*}
(\tau,g) \cdot (\gamma, x)=(g\gamma(\cdot -\tau),  gx)
\end{align*}

By counting appropriate $\hat{T}$-equivariant parametrized moduli space in $U/T$ and composing it with the map induced from the projection $(LG/T)\times M \to M$, we shall obtain a map
\begin{align}
\cC_M: \hat{H}_*^{\hat{T}}(L G \times_T M)[q^{\pm 1}] \to \hat{H}_*^{\hat{T}}((LG/T)\times M)[q^{\pm 1}] \to \hat{H}_*^{\hat{T}}(M)[q^{\pm 1}]  \label{eq:correlationmap}
\end{align}
where $\cC$ stands for correlation.  Lemma \ref{lem:kunneth} with $X=M$ gives a map:
\begin{align}  \label{eq:cQ}
\cQ: \hat{H}_{*}^{\hat{T}}(L G/T) \otimes  \hat{H}_*^{\hat{T}}(M)[q^{\pm 1}] \to \hat{H}_*^{\hat{T}}(L G  \times_T M)[q^{\pm 1}]
\end{align}
after tensoring with $\Lambda$.


The composition of $\cQ$ and $\cC_M$
will be the equivariant Seidel map $\cS$ in \eqref{eq:Sdefn1} we are going to construct.
The equivariant Seidel map in \eqref{eq:Sdefn2} is defined analogously by forgetting all the $S^1$ actions.

In the detailed construction below, we will 
\begin{itemize}
  \item work with $\hat{T}$-equivariant finite dimensional approximation of $LG/T$ instead of $LG/T$,
\item use Borel construction so we will have extra factors like $ET$, $BT$,
\item explain the choice of Floer data as well as the parametrized moduli spaces.
\end{itemize}

\subsection{Structure group of the universal bundle $U$}

For $\gamma \in L G$, we denote the fiber $\pi_U^{-1}(\gamma)$ by $X_{\gamma}$.
The space $X_{\gamma}$ is a Hamiltonian fiber bundle over
$S^2 \simeq D^2_0 \sqcup D^2_{\infty}/ \sim $ in the sense
of \cite[Chapter 6]{MSbook} meaning that the fibers of $\pi_{X_{\gamma}}: X_{\gamma} \to S^2$ are symplectic manifolds (in this case $(M,\omega)$)
and the structural group of $\pi_{X_{\gamma}}$ can be
reduced to the Hamiltonian diffeomorphism group of the
fibers (i.e. $\Ham(M,\omega)$).
A Hamiltonian bundle isomorphism
from $X_{\gamma_1}$ to $X_{\gamma_2}$ is a pair of maps $(F:X_{\gamma_1} \to X_{\gamma_2},f:S^2 \to S^2)$ such that $\pi_{X_{\gamma_2}} \circ F=f \circ \pi_{X_{\gamma_1}}$ and $F$ is a diffeomorphism
which respects structure groups.
The last condition means that there is a system of trivializations $\{U_i \times M\}_i$ of $X_{\gamma_1}$
and  $\{V_j \times M\}_j$ of $X_{\gamma_2}$ with transition functions lying in $\Ham(M,\omega)$ such that for all $i$, 
there is $j$ with  $f(U_i) \subset V_j$, and moreover, for
any $(u,m) \in U_i \times M$, there exists a smooth map
$\rho:U_i \to \Ham(M,\omega)$ such that
\begin{align*}
F:&U_i \times M \rightarrow f(U_i) \times M \subset V_j \times M\\
&(u,m) \mapsto (f(u),\rho(u)m) 
\end{align*}



We now recall the following result.

\begin{prop}[Proposition 7.1,\cite{Sav}]
The structure group of the bundle $U \to LG$ over the
connected component of $\gamma \in L G$ can be reduced
to $F^{\gamma}$, the group of Hamiltonian bundle
isomorphisms $X_{\gamma} \to X_{\gamma}$ which cover the
identity map $S^2 \to S^2$, and which agree with the identity over 
$D^2_{\infty}$ and 
over a neighbourhood of 
 $0 \in D^2_{0}$,
with respect to the coordinates \eqref{eq:SeidelCoordinates}
above. 
\footnote{We want to remark that the space $U$ in
  \cite[Proposition 7.1]{Sav} is the Seidel space bundle
over $L\Ham(M)$. The space $U$ in this paper is the
restriction of that bundle over $L G$ via the natural map $L G \to L\Ham(M)$.}
\end{prop}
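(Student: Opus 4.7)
The plan is to exhibit, over each connected component of $LG$ and a chosen basepoint $\gamma$ in it, a local trivialization atlas of $U$ modelled on the fibre $X_\gamma$ whose transition functions land in $F^\gamma$. The construction reduces to building a smooth ``filling'' operator for nearby loops: for $\eta$ in a sufficiently small neighbourhood $\mathcal{U}$ of the constant loop $e \in LG$, produce a smooth family $\Phi_\eta : D^2_0 \to G$ satisfying $\Phi_\eta(e^{2\pi i \theta}) = \eta(\theta)$ on $\partial D^2_0$, $\Phi_\eta \equiv e$ on a fixed small disc $\{r \leq r_0\} \subset D^2_0$, and $\Phi_e \equiv e$. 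Shrinking $\mathcal{U}$ so that every $\eta(\theta)$ lies in the image of a fixed exponential chart about $e \in G$, one writes $\eta(\theta) = \exp h_\eta(\theta)$ for $h_\eta \in C^\infty(S^1,\mathfrak{g})$ and sets $\Phi_\eta(re^{2\pi i \theta}) := \exp\!\bigl(\chi(r)\,h_\eta(\theta)\bigr)$, where $\chi : [0,1] \to [0,1]$ is a fixed cutoff equal to $0$ near $r=0$ and to $1$ near $r=1$; smoothness of $\eta \mapsto \Phi_\eta$ is manifest.

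Using this filling, over the open set $\gamma \cdot \mathcal{U} \subset LG$ define a trivialization $\gamma \cdot \mathcal{U} \times X_\gamma \to U|_{\gamma \cdot \mathcal{U}}$ sending $(\gamma \eta, p)$ to $\Psi_{\gamma \eta}(p)$, where $\Psi_{\gamma \eta}: X_\gamma \to X_{\gamma \eta}$ is the identity on the $\infty$-chart and on the $0$-chart acts by $(y, z)_0 \mapsto (\Phi_\eta(z)^{-1} \cdot y, z)_0$. The boundary condition $\Phi_\eta|_{\partial D^2_0} = \eta$ is exactly what makes $\Psi_{\gamma \eta}$ descend through the gluing relations defining $X_\gamma$ and $X_{\gamma \eta}$, via $(\gamma \eta)(\theta)^{-1} = \eta(\theta)^{-1}\gamma(\theta)^{-1}$; and since the $G$-action on $(M,\omega)$ is Hamiltonian and $G$ is connected, fibrewise left-multiplication by $\Phi_\eta(z)^{-1} \in G$ is a Hamiltonian diffeomorphism of $M$, so $\Psi_{\gamma \eta}$ is a Hamiltonian bundle isomorphism covering $\mathrm{id}_{S^2}$. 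Analogous trivializations centred at any other point $\gamma' = \gamma \eta_0$ of the component are built by left-translation, and together they cover the entire component.

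To check that transitions lie in $F^\gamma$, compare two such trivializations centred at $\gamma$ and at $\gamma \eta_0$ at an overlap point $\gamma \eta = (\gamma \eta_0) \eta'$ (so $\eta = \eta_0 \eta'$), where the second trivialization is rewritten with fibre $X_\gamma$ by composing with $\Psi_{\gamma \eta_0}: X_\gamma \to X_{\gamma \eta_0}$. The resulting automorphism $\Psi_{\gamma \eta_0}^{-1} \circ (\Psi'_{\gamma \eta})^{-1} \circ \Psi_{\gamma \eta}$ of $X_\gamma$ is the identity on the $\infty$-chart by construction, and on the $0$-chart is fibrewise multiplication by $\Phi_{\eta_0}(z)\,\Phi_{\eta'}(z)\,\Phi_\eta(z)^{-1}$; each factor equals $e$ on $\{r \leq r_0\}$, so does the product, and (on $\partial D^2_0$ one checks it equals $\eta_0(\theta)\eta'(\theta)\eta(\theta)^{-1} = e$), so the transition lies in $F^\gamma$. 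The main technical burden is the smoothness and naturality of the filling $\eta \mapsto \Phi_\eta$, which constrains us to work in a small neighbourhood of $e \in LG$ and then propagate by translates---a harmless limitation, since we only need a local atlas. The identity-near-$0$ behaviour built into the cutoff $\chi$ is precisely what pins the structure group down to $F^\gamma$, rather than the larger group of Hamiltonian bundle automorphisms of $X_\gamma$ covering $\mathrm{id}_{S^2}$.
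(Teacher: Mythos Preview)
Your argument is correct and is essentially the same idea as the paper's (which, following Savelyev, does not give a self-contained proof but illustrates it via an explicit abstract connection).  Both approaches identify nearby fibres $X_{\gamma_0}\to X_{\gamma_1}$ by the identity on the $\infty$-chart and by $(x,re^{2\pi i\theta})_0\mapsto (g(r,\theta)^{-1}x,re^{2\pi i\theta})_0$ on the $0$-chart for a suitable $G$-valued $g$ that equals $e$ near $r=0$ and equals $\gamma_0^{-1}\gamma_1$ on the boundary.  The paper takes $g(r,\theta)=m(r)(\theta)\,m(0)(\theta)^{-1}$ for an \emph{admissible path} $m:[0,1]\to LG$ (locally constant near the endpoints) and reads off the holonomy of this abstract connection; you take $g=\Phi_\eta$ built from an exponential chart and a fixed radial cutoff, and read off the transition functions of the resulting atlas.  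These are two packagings of the same construction: your filling $\Phi_\eta(re^{2\pi i\theta})$ is precisely $m(r)(\theta)$ for the particular admissible path $m(r)=\gamma\cdot\exp(\chi(r)h_\eta)$, and conversely any admissible path yields local trivializations of your form.  The one point you leave implicit---identifying $X_{\gamma'}\cong X_\gamma$ for $\gamma'$ far from $\gamma$ so that all charts share a common fibre model---is handled by chaining your $\Psi$'s along a finite sequence of small steps (equivalently, parallel transporting along a path), and your transition computation already shows the ambiguity lies in $F^\gamma$.
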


To illustrate the idea of this proposition, we describe explicitly the parallel transport maps of an abstract connection on the bundle (cf. \cite[Section 7.4]{Sav}).
A path $m:[0,1] \to LG$ is {\it admissible}\footnote{Admissible paths are called smooth in \cite[Section 7.4]{Sav}} if it is locally constant near the endpoints and the associated map
$\tilde{m}: [0,1] \times S^1 \to G$ defined by $\tilde{m}(r,\theta):=m(r)(\theta)$ is smooth.
Parallel transport maps for the abstract connection are defined for admissible paths.
Let $m:[0,1] \to LG$ be an admissible path from $m(0)=\gamma_0$ to $m(1)=\gamma_1$.
The parallel transport map $F_m: X_{\gamma_0} \to X_{\gamma_1}$ is given by
\begin{align}
&(x,re^{i\theta})_0 \mapsto ((m(r)(\theta))^{-1}(m(0)(\theta))x,re^{i\theta})_0 \label{eq:parallel1}\\
&(x,re^{i\theta})_{\infty} \mapsto (x,re^{i\theta})_{\infty} \label{eq:parallel2}
\end{align}
It is readily checked to be well-defined with holonomy group $F^{\gamma}$.


We also need the corresponding result for the bundle
$U_{borel} \to (L G)_{borel}$, where the Borel
construction is taken with respect to the $S^1\times T \times \{e\}$ sub-action of the $S^1 \times T \times T$ action.
Since $S^1 \times T $ is connected, the element $(\tau,g)
\cdot \gamma$ lies in the same connected component of
$\gamma$.  Let $(L G)_{borel}^{[\gamma]}$ be the
corresponding connected component of $(L G)_{borel}$,
and $U^{[\gamma]}_{borel}$ be the corresponding connected
component of $U_{borel}$.  As a consequence of
\cite[Proposition 7.1]{Sav}, we have the following.

\begin{prop}[cf. \cite{Sav} Proposition 7.2]
The structure group of the bundle $U^{[\gamma]}_{borel} \to (LG)_{borel}^{[\gamma]}$ can be reduced to $F^{\gamma}_{borel}$, the  group of Hamiltonian bundle isomorphisms $X_{\gamma} \to X_{\gamma}$, which sit over rotation of $S^2$, with the axis of rotation corresponding to $0 \in D^2_0$ and $0 \in D^2_{\infty}$. 
Moreover,  it can be given an abstract connection such that   elements of the holonomy group of $X_{\gamma}$ act as identity on $M \times \{0\} \subset M \times D^2_0$ and act as $g \times e^{2\pi i \tau}$ on $M \times D^2_{\infty} \subset X_{\gamma'}$ for some $(\tau,g) \in S^1 \times T $.
\end{prop}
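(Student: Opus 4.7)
The plan is to adapt the proof of \cite[Proposition 7.1]{Sav} to the Borel construction, working throughout with finite-dimensional approximations $E\hat{T}_n$ and passing to the limit $n \to \infty$ at the end. Write $(LG)^{[\gamma]}_{borel,n} = (E\hat{T}_n \times LG^{[\gamma]})/\hat{T}$ and $U^{[\gamma]}_{borel,n} = (E\hat{T}_n \times U^{[\gamma]})/\hat{T}$, where the $\hat{T}$-action on $U^{[\gamma]}$ is the restriction of the $\hat{T}\times T$-action in \eqref{eq:LGactions1}--\eqref{eq:LGactions2} to the subgroup $\hat{T} \times \{e\}$.

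First I would produce an atlas of local trivializations of $U^{[\gamma]}_{borel,n}$ with transition maps in $F^\gamma_{borel}$. Given $[e_0, \gamma_0]$ in the base, choose a local slice $\Sigma_{\gamma_0} \subset LG^{[\gamma]}$ through $\gamma_0$ and a chart $W \subset E\hat{T}_n$ through $e_0$; a neighborhood of $[e_0, \gamma_0]$ is diffeomorphic to $W \times \Sigma_{\gamma_0}$. The bundle $U^{[\gamma]} \to LG^{[\gamma]}$ is locally trivial over $\Sigma_{\gamma_0}$ with transitions in $F^\gamma$ by \cite[Proposition 7.1]{Sav}, and these combine with the product structure on $W \times \Sigma_{\gamma_0}$ to give local trivializations of $U^{[\gamma]}_{borel,n}$. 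Transitions between such charts are of two types. Type (a), changing the base slice in $LG^{[\gamma]}$, produces transitions already in $F^\gamma \subset F^\gamma_{borel}$. Type (b), moving in $E\hat{T}_n$, requires identifying fibers via the $\hat{T}$-action: inspection of \eqref{eq:LGactions1}--\eqref{eq:LGactions2} with $h = e$ shows that $(\tau, g) \in \hat{T}$ acts on $X_{\gamma_1}$ by a bundle isomorphism onto $X_{g\gamma_1(\cdot-\tau)}$ which sits over the rotation of $S^2$ by $\tau$ (with axis at $0 \in D^2_0$ and $0 \in D^2_\infty$), acts trivially on the $M$-factor in the $D^2_0$-chart, and acts as $g \times e^{2\pi i\tau}$ on $M \times D^2_\infty$. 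Such transitions lie in $F^\gamma_{borel}$, yielding the structure group reduction.

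For the connection, I would start from Savelyev's admissible-path connection on $U^{[\gamma]} \to LG^{[\gamma]}$ defined by \eqref{eq:parallel1}--\eqref{eq:parallel2} and average it over the compact group $\hat{T}$ to obtain a $\hat{T}$-invariant connection $\nabla$. The key point is that averaging preserves the defining property of $F^\gamma$ --- parallel transport is the identity near $0 \in D^2_0$ and on all of $D^2_\infty$ --- because \eqref{eq:parallel1}--\eqref{eq:parallel2} already have this property (as $m$ is locally constant near its endpoints), and this property is preserved under integration over a compact group. Combining $\nabla$ with any principal connection on $E\hat{T}_n \to B\hat{T}_n$ yields a $\hat{T}$-invariant connection on $E\hat{T}_n \times U^{[\gamma]} \to E\hat{T}_n \times LG^{[\gamma]}$ which descends to the desired abstract connection on $U^{[\gamma]}_{borel,n}$. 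The holonomy of a loop $\ell$ in the base is then computed by lifting $\ell$ to a path $\tilde\ell$ whose endpoints in $E\hat{T}_n \times LG^{[\gamma]}$ differ by some $(\tau, g) \in \hat{T}$, and composing the $\nabla$-parallel transport along the $LG^{[\gamma]}$-component of $\tilde\ell$ (lying in $F^\gamma$) with the action of $(\tau, g)$ on the final fiber (a type (b) transition as above). By the analysis of type (b) transitions, this composition lies in $F^\gamma_{borel}$ and acts as the identity on $M \times \{0\} \subset D^2_0$ and as $g \times e^{2\pi i\tau}$ on $M \times D^2_\infty$.

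The main anticipated obstacle is verifying that $\hat{T}$-averaging of Savelyev's connection remains well-defined and stays inside the reduced structure group: the parallel transport formulas \eqref{eq:parallel1}--\eqref{eq:parallel2} involve an inverse in $G$ and glue nontrivially across the two charts, so one must check that the averaged parallel transports still satisfy the compatibility imposed by the gluing \eqref{eq:SeidelCoordinates} and continue to be identity near $0 \in D^2_0$ and on $D^2_\infty$. Once this is settled, all constructions are manifestly compatible with the inclusions $E\hat{T}_n \hookrightarrow E\hat{T}_{n+1}$, so passage to the limit $n \to \infty$ is immediate.
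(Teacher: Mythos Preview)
Your overall strategy matches the paper's: use Savelyev's abstract connection on $U\to LG$ to induce one on the Borel construction, then read off the holonomy as the composition of an $F^\gamma$-element (from parallel transport in the $LG$-direction) with the $\hat{T}$-action (closing up the lift). The paper's proof is a one-line sketch invoking \cite[Section~7.5]{Sav} and stating exactly this key point.

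The one place you diverge is the averaging step, and this is where your self-identified obstacle lives. In fact averaging is unnecessary: Savelyev's parallel transport \eqref{eq:parallel1}--\eqref{eq:parallel2} is \emph{already} $\hat{T}$-equivariant. If $m$ is an admissible path and $m'(s)=g\,m(s)(\cdot-\tau)$ is its $(\tau,g)$-translate, then on the $0$-chart
\[
F_{m'}\bigl((\tau,g)\cdot(x,re^{2\pi i\theta})_0\bigr)
=\bigl((m(r)(\theta))^{-1}(m(0)(\theta))x,\,re^{2\pi i(\theta+\tau)}\bigr)_0
=(\tau,g)\cdot F_m(x,re^{2\pi i\theta})_0,
\]
since the $g$'s cancel and the $\tau$-shift in $\theta$ is undone by the $\tau$-shift in the loop argument; the $\infty$-chart check is immediate. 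Hence the abstract connection descends to $U^{[\gamma]}_{borel}$ without modification, and your worry about whether averaging preserves the gluing compatibility and the identity-near-$0$ property simply evaporates. (Separately, ``averaging'' an abstract connection specified only via parallel transport along admissible paths is not an obviously well-defined operation, so it is good that you do not need it.) With this simplification your argument goes through and coincides with the paper's.
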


\begin{proof}
It follows from the same line of reasoning as \cite[Section 7.5]{Sav}.
The key point is that the abstract connection of the bundle $U \to L G$ induces an abstract connection on 
$U^{[\gamma]}_{borel} \to (L G)_{borel}^{[\gamma]}$ whose parallel transport map is given by  \eqref{eq:parallel1}, \eqref{eq:parallel2} up to the action by $S^1 \times T$.
\end{proof}

The isomorphism type of the group $F^{\gamma}$ (resp. $F^{\gamma}_{borel}$) is independent of $\gamma$ and we denote it by $F$ (resp. $F_{S^1 \times T}$). 

\begin{defn}
  A fiber bundle $p:P \to B$ is an $\cF$-fibration if each
  fiber \(X_b:=p^{-1}(b)\) is equipped with a Hamiltonian bundle structure 
  $\pi:X_b \to S^2$ and the structure group of $p$ is
  isomorphic to either $F$ or $F_{S^1 \times T}$.
\end{defn}

\begin{lem}\label{l:actSecTrivial}
Let $p:P \to B$ be an $\cF$-fibration, $b \in B$ and $X_b$ be the fiber over $b$.
The holonomy group of $p:P \to B$ at $X_b$ acts trivially 
on the section classes $H_2^{sec}(X_b)$ 
(i.e. the subspace of $H_2(X_b)$ spanned by the image of sections).
\end{lem}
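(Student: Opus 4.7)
The plan is to establish the stronger statement that $\phi_\ast$ acts as the identity on all of $H_\ast(X_b)$ for every element $\phi$ of the holonomy group, from which the lemma follows since $H_2^{\mathrm{sec}}(X_b) \subseteq H_2(X_b)$. The main tool is the Mayer--Vietoris sequence for the closed decomposition $X_b = X_b|_{D^2_0} \cup X_b|_{D^2_\infty}$ with intersection $X_b|_{\partial D^2}$. Any holonomy element covers either the identity (for $F^\gamma$) or a rotation of $S^2$ about the axis through $0 \in D^2_0$ and $0 \in D^2_\infty$ (for $F^\gamma_{S^1 \times T}$); in both cases this preserves the two hemispheres, so $\phi$ preserves the decomposition and induces an endomorphism of the Mayer--Vietoris long exact sequence. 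By the five lemma it then suffices to check that $\phi_\ast$ is the identity on $H_\ast$ of each of the three pieces.

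For $\phi \in F^\gamma$, the defining property gives $\phi = \mathrm{id}$ on $X_b|_{D^2_\infty}$ outright, and by restriction $\phi = \mathrm{id}$ on $X_b|_{\partial D^2}$ as well. For the remaining piece $X_b|_{D^2_0} \cong D^2_0 \times M$, I will deformation-retract onto the fiber over $0$: this fiber is preserved by $\phi$ (since $\phi$ covers $\mathrm{id}_{S^2}$), and $\phi$ is the identity on it by hypothesis, so the induced map on $H_\ast(M)$ is the identity.

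For $\phi \in F^\gamma_{S^1 \times T}$, the action on $X_b|_{D^2_\infty} \cong D^2_\infty \times M$ has the form $(z, m) \mapsto (R_\tau z, g \cdot m)$ for some $(\tau, g) \in S^1 \times T$, and the same formula, restricted to the boundary circle, governs the action on $X_b|_{\partial D^2}$ via the $\infty$-trivialization. Since $S^1$ and $T$ are both connected, I will choose a path $g_s$ in $T$ from $e$ to $g$ and use the isotopy $(R_{s\tau}, g_s)$ to connect this action to the identity, so $\phi_\ast$ is trivial on both pieces. The argument on $X_b|_{D^2_0}$ again proceeds by retracting to the fiber over the pole $0 \in D^2_0$, which is fixed by $\phi$ and on which $\phi$ acts as the identity by hypothesis.

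I do not anticipate a serious obstacle. The main subtleties are verifying that $\phi$ preserves the two hemispheres (which follows automatically from the axis-of-rotation condition) and correctly identifying the action on $X_b|_{\partial D^2}$ in the Borel case from the formula on $X_b|_{D^2_\infty}$; both are routine.
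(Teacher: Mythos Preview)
Your argument is correct in outline, but it takes a substantially more elaborate route than the paper's. The paper's proof is a single sentence: the structure group of an $\cF$-fibration (namely $F$ or $F_{S^1\times T}$) is connected, so every holonomy element is isotopic to the identity within the structure group and therefore acts trivially on all of $H_*(X_b)$. You instead split $X_b$ along the equator and check triviality on each piece via Mayer--Vietoris and the five lemma, which works but trades a global isotopy for three local ones.

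One point to watch in your $F_{S^1\times T}$ case: you assert that $\phi$ acts on $X_b|_{D^2_\infty}$ as $(z,m)\mapsto (R_\tau z,\, g\cdot m)$ with $g\in T$, and as the identity on the fiber over $0\in D^2_0$. That specific form is supplied by the ``Moreover'' clause of Proposition~3.2, which describes the holonomy of the particular bundle $U_{borel}\to (LG)_{borel}$ (and hence of its pullbacks). By definition, however, a general element of $F^\gamma_{borel}$ is any Hamiltonian bundle automorphism covering a rotation of $S^2$; on $D^2_\infty$ it has the form $(z,m)\mapsto (R_\tau z,\, \rho(z)(m))$ for some $\rho:D^2_\infty\to\Ham(M)$, and there is no condition forcing it to be the identity over $0\in D^2_0$. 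Since the lemma is stated for arbitrary $\cF$-fibrations, your argument as written only treats those that actually arise in the paper. The repair is routine and fits your framework: retract each hemisphere to its polar fiber, on which $\phi$ acts by an element of the connected group $\Ham(M)$ (hence trivially on homology), and on the equator note that the loop $\rho|_{\partial D^2_\infty}$ extends over the disk and is therefore null-homotopic. The paper's connectedness argument sidesteps all of this unpacking.
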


\begin{proof}
The structure group of an $\cF$-fibration is connected so the homological action of the holonomy group is trivial.
\end{proof}


\subsection{Admissible structures}\label{ss:admissible structure}
In order to do Floer theory on these Seidel space bundles we will need to introduce some auxiliary structures.  We will follow the classical construction of symplectic structures on fibrations as in \cite{gls-fib,MSbook, Sav}.  

\begin{defn}
Let $\pi_X:X \to S^2$ be a smooth Hamiltonian bundle and $\{\omega_a\}_{a \in S^2}$ be its associated fiberwise symplectic forms. A symplectic form $\Omega$ on $X$ is called $\omega$-compatible if $\Omega|_{\pi_X^{-1}(a)}=\omega_a$ for every $a \in S^2$.
\end{defn}
The space of $\omega$-compatible symplectic forms is non-empty and weakly contractible (\cite[Lemma 2.5]{Sav}).\footnote{It is denoted by $\mathcal{A}$ in the proof of the result.}

\begin{example}[\cite{Sav}, before Definition 2.6]\label{e:SympForm}
Let $\gamma \in LG$. An  $\omega$-compatible symplectic form $\widetilde{\Omega}_{\gamma}$ on $X_{\gamma}$ can be explicitly constructed as follows.
Let $\epsilon >0$ and $0< \delta < \frac{1}{2}$.
Let $\eta(r):[0,1] \to [0,1]$ be an increasing smooth function such that $\eta(r)=1$ when $1-\delta \le r \le 1$ and $\eta(r)=r^2$ when $r \le 1-2\delta$.

Over $M \times D^2_0$, 
\begin{align*}
\widetilde{\Omega}_{\gamma}:= \omega + \epsilon 2r dr \wedge d \theta
\end{align*}
Over $M \times D^2_{\infty}$, 
\begin{align*}
\widetilde{\Omega}_{\gamma}:=& \omega + d(\eta(r) H^{\gamma}_{\theta}(\gamma(0)^{-1}x)) \wedge d\theta
-\max_x H^{\gamma}_{\theta}(x) d\eta \wedge d\theta -\epsilon 2r dr \wedge d \theta 
\end{align*}
where $(H^{\gamma}_{\theta})_{\theta \in S^1}$ is the normalized generating Hamiltonian function of $\gamma^{-1}(0) \circ \gamma$, and $D^2_{\infty}$ is given the negative of the standard orientation.

We can write $\widetilde{\Omega}_{\gamma}=\omega+\beta_1+\beta_2$ where 
\begin{align}
\beta_1&=\eta(r) dH^{\gamma}_{\theta}(\gamma(0)^{-1}x) \wedge d\theta \label{eq:beta1}\\
\beta_2&=(H^{\gamma}_{\theta}(\gamma(0)^{-1}x)-\max_x H^{\gamma}_{\theta}(x)) d\eta \wedge d\theta -\epsilon 2r dr \wedge d \theta \label{eq:beta2}
\end{align}
The fact that $\iota_v\beta_2=0$ for all $v \in TM$, $\beta_2|_{TD^2_{\infty}}>0$, $\iota_{\partial_r}\beta_1=0$ and $\beta_1|_{TM}=0$ guarantees that 
$\widetilde{\Omega}_{\gamma}$ is non-degenerate.

To see why, let $v=v_M+v_D \in TM \oplus TD^2_{\infty}$. If $\beta_2(v_D,\partial_r) \neq 0$, then $\widetilde{\Omega}_{\gamma}(v,\partial_r)= \beta_2 (v_D,\partial_r)  \neq 0$.
If $\beta_2(v_D,\partial_r) =0$, then $v_D$ is a multiple of $\partial_r$ and there are two cases depending on whether $v_M=0$.
If $v_M=0$, then $\widetilde{\Omega}_{\gamma}(v,\partial_{\theta})=\beta_2(v_D,\partial_{\theta})\neq 0$.
If $v_M \neq 0$, then we can find another vector $v_M'$ in $TM$ such that $\widetilde{\Omega}_{\gamma}(v,v_M')=\omega(v_M,v_M') \neq 0$.
Therefore, $\widetilde{\Omega}_{\gamma}$ is non-degenerate.
\end{example}

\begin{defn}
For an $\cF$-fibration $p:P \to B$, an admissible family of symplectic structures $\{\Omega_b\}_{b \in B}$
is a family of $\omega$-compatible structures.
When $B$ is a finite dimensional smooth manifold (possibly with boundary) and $p$ is smooth, we require that $\{\Omega_b\}_{b \in B}$ is a smooth family.
\end{defn}

\begin{rem}
More precisely, in order to make sense of $\{\Omega_b\}_{b \in B}$, we observe that the space of $\omega$-compatible symplectic forms is acted by the structural group $\mathcal{F}$.
The admissible family of symplectic structures $\{\Omega_b\}_{b \in B}$ shall be defined as a section from $B$ to the associated bundle with fibers being the space of  $\omega$-compatible symplectic forms.
\end{rem}

If $B$ has the homotopy type of a CW complex, then by obstruction theory, the space of admissible families of symplectic structures is
not empty and weakly contractible because the space of $\omega$-compatible symplectic forms is.
In particular, it applies when $B=LG$ or $B=LG_{borel}$.

\begin{example}\label{e:admissSym}
Recall $\widetilde{\Omega}_{\gamma}$ from Example \ref{e:SympForm}.
The family $\{\widetilde{\Omega}_{\gamma}\}_{\gamma \in LG}$ forms an admissible family of symplectic structures
for $U \to LG$.
By averaging over the $S^1 \times T \times T$-action, we can get an $S^1 \times T \times T$-invariant admissible family of symplectic structures $\{\widetilde{\Omega}_{\gamma}^{aver}\}_{\gamma \in LG}$ for $U \to LG$.
Over $\{\gamma\} \times M \times D^2_{\infty}$, $\widetilde{\Omega}^{aver}_{\gamma}$ is given by $\omega+\beta_1^{aver}+\beta_2^{aver}$ where $\beta_i^{aver}=\int_{S^1 \times T \times T} (\tau,g,h)^* \beta_i d(\tau,g,h)$ is the average of $\beta_i$.
The conditions $\iota_v\beta_2^{aver}=0$ for all $v \in TM$, $\beta_2^{aver}|_{TD^2_{\infty}}>0$, $\iota_{\partial_r}\beta_1^{aver}=0$ and $\beta_1^{aver}|_{TM}=0$
remain valid so $\widetilde{\Omega}^{aver}_{\gamma}$ is non-degenerate.

Using the projection $LG \times S^{\infty} \times ET\to LG$, we can pull back  $\{\widetilde{\Omega}^{aver}_{\gamma}\}_{\gamma \in LG}$ to get an admissible family of $S^1 \times T \times T$-invariant symplectic structures for $U \times S^{\infty} \times ET\to LG \times S^{\infty} \times ET$. It descends to an admissible family of $T$-invariant symplectic structures $\{\widetilde{\Omega}^{borel}_{\gamma}\}_{\gamma \in LG_{borel}}$
for $U_{borel} \to (LG)_{borel}$.


\end{example}





\begin{defn}
For a smooth Hamiltonian bundle $\pi_X:X \to S^2$ with an
$\omega$-compatible symplectic form $\Omega$, an almost
complex structure $J$ is called $\pi$-compatible if $\pi_X$
is $J$-holomorphic, $J$ is $\Omega$-tamed and the horizontal
tangent space $(T\pi_X^{-1}(s))^{\Omega}$ is preserved by
$J$.  
\end{defn}

The space of $\pi$-compatible  almost
complex structures is weakly contractible \cite[Proposition
2.5.13]{MSbook}, \cite[Lemma 8.2.8]{MSJbook}.  

\begin{defn}
For a smooth finite dimensional manifold $B$ (possibly with boundary) 
and a smooth
$\cF$-fibration $p:P \to B$ with an admissible family of
symplectic structures $\{\Omega_b\}_{b \in B}$, we call a
smooth family of almost complex structures $\{J_b\}_{b \in
  B}$  $\pi$-compatible if $J_b$ is $\pi$-compatible for all
  $b \in B$.
\end{defn} 

Again, it follows that the space of $\pi$-compatible
family of almost complex structures $\{J_b\}_{b \in B}$ is
weakly contractible. 

\subsection{Equivariant cycles}\label{ss:equiCycle}

Let $B$ be an oriented smooth manifold (possibly with boundary) and $f:B \to  (LG)_{borel}$ be a continuous map.
We have the following pull-back diagram such that $Y$ is a principal $\hat{T}$ bundle over $B$ and $F$ is $\hat{T}$ equivariant
\begin{equation}\label{eq:Fmap}
\xymatrix{
Y \ar[rrrr]^{F=(F_H,F_S,F_{T})} \ar[d] & & & & LG \times S^{\infty} \times ET   \ar[d]\\
B \ar[rrrr]_{f} &&  & &(LG)_{borel}
}
\end{equation}


\begin{defn}\label{d:smooth}
We call $F_H$ {\it smooth} if the associated map $\tilde{F}_H:Y \times S^1 \to G$ defined by $\tilde{F}_H(y,\theta):=F_H(y)(\theta)$ is smooth.
We call $f$ {\it smooth} if $F_H$, $F_S$ and $F_{T}$ are smooth.  

\end{defn}

We can also pull back the $\cF$-fibration
\begin{equation}
\xymatrix{
P_f \ar[rr] \ar[d]_{p_f}  & &U_{borel}  \ar[d]\\
B \ar[rr]_{f} & &(LG)_{borel}.
}
\end{equation}
When $f$ is smooth, $P_f$ and $p_f$ are smooth. 
Moreover, we can pull back the admissible family of symplectic structures $\{\widetilde{\Omega}^{borel}_{\gamma}\}_{\gamma \in LG_{borel}}$ in Example \ref{e:admissSym}
to get a family of symplectic structures for $p_f:P_f \to B$.
We can perturb this family  of symplectic structures to make it smooth and hence admissible.\footnote{Before perturbation, it might not be smooth because the construction of $\widetilde{\Omega}_{\gamma}$ uses $\max$ and it might not vary smoothly as we vary $\gamma$. See also the proof of Lemma \ref{r:inverselimit}.}


The space $P_f$ can be described as follows.
Let
\begin{align}
P_{F}:=Y \times M \times D^2_0 \sqcup Y \times M \times D^2_{\infty} / \sim \label{eq:Seidelbun}
\end{align}
where the equivalent relation is given by $(y, x, e^{2\pi i \theta})_0 \sim (y, F_H(y)(\theta)(x), e^{2\pi i \theta})_\infty$.
It has an $\hat{T}$ action as before
\begin{align*}
(\tau,g) \cdot (y, x, re^{2\pi i \theta})_{0}&=((\tau,g) \cdot y,  x, re^{2\pi i (\theta +\tau)})_{0}\\
(\tau,g) \cdot (y, x, re^{2\pi i \theta})_{\infty}&=((\tau,g) \cdot y,  gx, re^{2\pi i (\theta +\tau)})_{\infty}
\end{align*}
The quotient of $P_{F}$ by $\hat{T}$ is $P_f$.

We also have a $\mathbb{P}^1$-bundle over $B$ given by forgetting the $M$ factor in \eqref{eq:Seidelbun}
\begin{align} 
((Y \times D^2_0 \sqcup Y \times D^2_{\infty})/\sim)/\hat{T} \to B \label{eq:Spherebun}
\end{align}
We denote the fiber over $b \in B$ by $\mathbb{P}^1_b$.  The fibration $P_f$ contains two distinguised submanifolds which will be important in the sequel.  Let $\tilde{i}_{0,\infty}: Y \times M \hookrightarrow Y \times M \times D^2_{0,\infty} \subset P_{F}$ be the embeddings over the origin in $D^2_{0}$ and $D^2_{\infty}$, respectively. The image of $\tilde{i}_{0,\infty}$ is $\hat{T}$-invariant so it inherits a  $\hat{T}$ action.
Moreover, it descends to \begin{align} \label{eq: divisors} i_{0,\infty}: (P_f)_{0,\infty}:=(Y \times M)/\hat{T} \hookrightarrow P_f. \end{align}


\subsection{Main construction}\label{ss:parametrizedmoduli}


Our actual constructions will take place over $(LG/T)_{borel}$,  so we must consider cycles in this space.   Let $\bar{B}$ be an oriented smooth manifold (possibly with boundary) and $\bar{f}:\bar{B} \to  (LG/T)_{borel}$ be a continuous map.
We have the following pull-back diagram such that $B$ is a principal $T$-bundle over $\bar{B}$ and $f$ is $T$- equivariant (with respect to right multiplication):
\begin{equation}\label{eq:liftdiag}
\xymatrix{
B \ar[rrrr]^{f} \ar[d] & & & & (LG)_{borel}   \ar[d]\\
\bar{B} \ar[rrrr]_{\bar{f}} &&  & &(LG/T)_{borel}
}
\end{equation}

\begin{defn} \label{d:smoothbarB}
We say that $\bar{f}:\bar{B} \to  (LG/T)_{borel}$  is smooth if $f: B \to LG_{borel}$ is smooth in the sense of Definition \ref{d:smooth}.  \end{defn}

\begin{lem}\label{l:smoothApproximation}
Any continuous map $\bar{f}:\bar{B} \to  (LG/T)_{borel}$ is the $C^0$-limit of a sequence of smooth maps
$(\bar{f}_k:\bar{B} \to  (LG/T)_{borel})_{k \in \mathbb{N}}$.
Moreover, if the composition of $\bar{f}$ and the projection $\Pi:(LG/T)_{borel} \to B\hat{T}$ is smooth, then 
$\bar{f}_k$ can be chosen such that $\Pi \circ \bar{f}_k$ is independent of $k$.

Any continuous map $\bar{c}:[0,1] \times \bar{B} \to  (LG/T)_{borel}$ is the $C^0$-limit of a sequence of smooth maps
$(\bar{c}_k: [0,1] \times \bar{B} \to  (LG/T)_{borel})_{k \in \mathbb{N}}$.
Moreover, if $\bar{c}(0,-)$ and $\bar{c}(1,-)$ are smooth and $\bar{c}(t,-)$ is indepndent of $t$ near $0$ and $1$ respectively, then we can assume that $\bar{c}_k(0,-)=\bar{c}(0,-)$
and $\bar{c}_k(1,-)=\bar{c}(1,-)$ for all $k$.
\end{lem}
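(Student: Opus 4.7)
My plan is to lift $\bar f$ to a $\hat{T} \times T$-equivariant map out of the total space $Y$ of the principal bundle in \eqref{eq:liftdiag}, smoothly approximate this lift equivariantly, and descend the approximation to $\bar B$. Concretely, by Definition \ref{d:smoothbarB}, smoothness of $\bar f$ is equivalent to smoothness of the $\hat{T} \times T$-equivariant map $F = (F_H, F_S, F_T) : Y \to LG \times S^{\infty} \times ET$, and $C^0$-convergence is equivalent on $Y$ and $\bar B$ since $Y \to \bar B$ is a principal bundle with compact fiber.

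Since $\bar B$ is a finite-dimensional manifold, so is $Y$, and the image of any compact set under $F_S \times F_T$ lies in a finite-dimensional approximation $S^{2N+1} \times ET_N$ for $N$ large. This reduces the problem to smooth equivariant approximation of continuous equivariant maps into finite-dimensional targets. For $F_S$ and $F_T$, I would apply the standard equivariant smooth approximation theorem: using Mostow--Palais, equivariantly embed the targets into a unitary $\hat{T} \times T$-representation, mollify the continuous map in a chart on $Y$, average over the compact group $\hat{T} \times T$ to restore equivariance, and project back through an equivariant tubular neighborhood. For $F_H : Y \to LG$, I would pass to the adjoint $\tilde F_H : Y \times S^1 \to G$ from Definition \ref{d:smooth}, which by construction is a continuous $\hat{T} \times T$-equivariant map between finite-dimensional smooth manifolds (the $\hat{T} \times T$-action on $Y \times S^1$ combines the given $Y$-action with loop rotation on $S^1$ by the $S^1$-factor of $\hat{T}$, while $G$ carries the action of the two $T$-factors by two-sided translation and trivial $S^1$-action). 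The same mollify-and-average procedure produces smooth equivariant approximations $\tilde F_H^k$ converging to $\tilde F_H$ in $C^0$; un-adjointing yields smooth equivariant $F_H^k : Y \to LG$. Assembling the pieces gives smooth equivariant $F_k$ that descend to smooth $\bar f_k : \bar B \to (LG/T)_{borel}$ converging to $\bar f$.

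For the ``moreover'' clause of the first assertion, if $\Pi \circ \bar f$ is smooth then after lifting the components $F_S$ and $F_T$ are already smooth, so one leaves them untouched and only approximates $F_H$ as above; then $\Pi \circ \bar f_k = \Pi \circ \bar f$ is independent of $k$. The second assertion follows by applying the same procedure to the compact manifold-with-boundary $[0,1] \times \bar B$. When $\bar c$ is smooth on neighborhoods of $\{0\} \times \bar B$ and $\{1\} \times \bar B$ and is $t$-constant there, I would extend $\bar c$ to $(-\varepsilon, 1+\varepsilon) \times \bar B$ using that constancy, perform the equivariant smooth approximation on the extension (which automatically agrees with the original on the boundary since the original is already smooth there and mollification can be taken to preserve smooth functions up to an arbitrarily small perturbation that vanishes on the already-smooth locus), and restrict back. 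Standard relative smooth approximation guarantees we may take $\bar c_k(0,-) = \bar c(0,-)$ and $\bar c_k(1,-) = \bar c(1,-)$.

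The main technical obstacle is the equivariant smooth approximation of $\tilde F_H$: mollification and averaging must be carried out compatibly with both the $\hat{T} \times T$-action and the target manifold $G$. This is resolved by choosing a $\hat{T} \times T$-equivariant embedding of $G$ into a unitary representation (Mostow--Palais) together with an equivariant tubular neighborhood; compactness of $\hat{T} \times T$ then makes the averaging well-defined and preserves proximity to $G$. All remaining ingredients (smooth approximation in finite-dimensional approximations of $S^\infty$ and $ET$, partition-of-unity arguments to globalize, and relative approximation preserving the boundary) are entirely standard.
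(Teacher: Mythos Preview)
Your proposal is correct and follows essentially the same route as the paper: lift $\bar f$ to the $\hat T\times T$-equivariant map $F=(F_H,F_S,F_T)$ on $Y$, reduce $F_S,F_T$ to finite-dimensional targets by compactness, pass to the adjoint $\tilde F_H:Y\times S^1\to G$, and apply equivariant smooth approximation to each piece; the ``moreover'' clause and the relative version are handled identically. The only cosmetic difference is how the equivariant approximation itself is justified: the paper cites Bredon's general theorem and separately gives a self-contained argument exploiting that the $\hat T\times T$-action on $Y$ is free (descend to the quotient, approximate there, lift back), whereas you invoke Mostow--Palais plus mollify-and-average; both are standard proofs of the same fact.
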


\begin{proof}
We start with the first part of the lemma.
Let $(F_H,F_S,F_T): Y \to LG \times S^{\infty} \times ET$ be the $S^1 \times T\times T$-equivariant map associated to $\bar{f}:\bar{B} \to  (LG/T)_{borel}$ as above.
It suffices to show that $\tilde{F}_H: Y \times S^1 \to G$, $F_S$ and $F_T$
can be  approximated by $S^1 \times T \times T$-equivariant, $S^1$-equivariant and $T$-equivariant smooth maps respectively.
These approximations exist by a general equivariant smoothing theorem (\cite[Theorem 4.2 on p.317]{Bredon72}).
If $\Pi \circ \bar{f}$ is already smooth to begin with, then $F_S$ and $F_T$ are smooth.
Therefore, we only need to choose an equivariant smooth approximation of $\tilde{F}_H$
and the corresponding $\bar{f}_k$ would have $\Pi \circ \bar{f}=\Pi \circ \bar{f}_k$ for all $k$.
This finishes the proof of the first part.

The proof of the second part is similar.  Since  $Y$ is a principal $\hat{T} \times T$ principal bundle, there is a simple proof for the existence of the equivariant smoothing which does not make use of the general theorem.
We provide the arguments below for readers convenience.

Since $Y$ is compact, there exists $m \in \mathbb{N}$ such that $F_S(Y) \subset S^{2m+1}$
and $F_T(Y) \subset (ET)_m$.
Therefore, we shall assume that $F_S:Y \to S^{2m+1}$ and $F_T:Y \to (ET)_m$.
Note that  both domain and codomain are finite dimensional, and the actions are free and smooth.
Therefore, a smooth approximation of the map between quotients lifts to an $S^1$-equivariant (resp. $T$-equivariant) smooth approximation of $F_S$ (resp. $F_T$).

Finally, we consider $\tilde{F}_H$. The $\hat{T} \times T$-equivariancy means that
$\tilde{F}_H((\tau,g,h)y,\theta)=g\tilde{F}_H(y,\theta -\tau)h^{-1}$.
Therefore, $\tilde{F}_H^{res}:=\tilde{F}_H|_{Y \times \{e\}}$ determines $\tilde{F}_H$ completely, and the smoothness of $\tilde{F}_H^{res}$ is equivalent to the smoothness of $\tilde{F}_H$.
As a result, it suffices to find a $T \times T$-equivariant smooth approximation of $\tilde{F}_H^{res}$.

Since $Y$ is a principal $\hat{T} \times T$-bundle (in particular, a principal $T \times T$-bundle), there exists a $T \times T$-equivariant map $F_{T\times T}:Y \to E(T \times T)_r$ for some $r \in \mathbb{N}$. 
The map $(\tilde{F}_H^{res},F_{T \times T}): Y \to G \times E(T \times T)_r$ is  $T \times T$-equivariant so it descends to a map
$Y/(T \times T) \to (G \times E(T \times T)_r)/T \times T$.
A smooth approximation of this descended map lifts to a $T \times T$-equivariant smooth approximation of $(\tilde{F}_H^{res},F_{T \times T})$.
In particular, the first component would be a $T \times T$-equivariant smooth approximation of $\tilde{F}_H^{res}$.
This completes the proof for the first part where $\Pi \circ \bar{f}$ is not assumed to be smooth.

\end{proof}

Given a smooth map $\bar{f}:\bar{B} \to  (LG/T)_{borel}$,  the induced $\cF$-fibration $p:P_f \to B$ inherits a free $T$-action.  By quotienting by this action, we obtain a fibration $\bar{p}: \bar{P}_f \to \bar{B}$ which we refer to as the T-reduced $\cF$-fibration associated to $\bar{f}:\bar{B} \to  (LG/T)_{borel}$.  We can equip this $T$-reduced $\cF$-fibration with families of symplectic forms as follows: On $p:P_f \to B$,  we can restrict to those admissible families of symplectic structures $\{\Omega_b\}_{b \in B}$ which are $T$-invariant (cf. Example \ref{e:admissSym}).   
By averaging over the group,  we again see that this is a non-empty and connected space.\footnote{We might need to add a positive multiple of the area form from the base of the fibration $\pi_b: X_b \to \mathbb{P}^1_b$ to guarantee that after taking average, we get a family of non-degenerate $2$-forms.}  
Given a $T$-invariant admissible family of symplectic structures $\{\Omega_b\}_{b \in B}$,  we can view this structure as being pulled-back from a family  $\{\Omega_{\bar{b}}\}_{\bar{b} \in \bar{B}}$ for $\bar{p}: \bar{P}_f \to \bar{B}$.  

Following the overview in Section \ref{ss:overviewSeidelMap}, we have to consider the corresponding cycles mapping to $(S_{T,0})_{borel}, (S_{T,\infty})_{borel} \subset U_{borel}/T$ (cf. \eqref{eq:ST0}, \eqref{eq:STinf}).  These will correspond to the submanifolds $(\bar{P}_f)_{0,\infty}:=(P_f)_{0,\infty}/T$ where $(P_f)_{0,\infty}$ is defined in \eqref{eq: divisors}.
Concretely, we have (cf. \eqref{eq:LGactions1}, \eqref{eq:LGactions2}) 
\begin{align}
(\bar{P}_f)_{0}=Y \times M/ (y, x) \sim ((\tau,g,h) \cdot y,  hx)\\
(\bar{P}_f)_{\infty}=Y \times M/ (y, x) \sim ((\tau,g,h) \cdot y,  gx) \label{eq:PtoSinf}
\end{align}
so there are natural maps $(\bar{P}_f)_{0/\infty} \to (S_{T,0/\infty})_{borel}$.

Let $\bar{B}$ be a {\it closed} oriented manifold.
For a smooth map $\bar{f}:\bar{B} \to (LG/T)_{borel}$, we will first use parametrized moduli spaces in $\bar{P}_f$ to define a map
\begin{align*}
\cC_{\bar{f}}: H_*((\bar{P}_f)_0)[q^{\pm 1}] \to H_*((\bar{P}_f)_{\infty})[q^{\pm 1}]
\end{align*}
Then we will show that these $\cC_{\bar{f}}$ are compatible for various $\bar{f}$ and they form the finite dimensional analogue of the map \eqref{eq:correlationmap}
 in Section \ref{ss:overviewSeidelMap}. To define these parameterized moduli spaces,  choose a family $\{\Omega_{\bar{b}}\}_{\bar{b} \in \bar{B}}$ of  symplectic structures on $\bar{p}_f:\bar{P}_f \to \bar{B}$ as above.  Let $\{J_{\bar{b}}\}_{\bar{b} \in \bar{B}}$ be a $\pi$-compatible family of almost complex structures with respect to $\{\Omega_{\bar{b}}\}_{\bar{b} \in \bar{B}}$.  

\begin{defn}

Let $\cM_{0,2}(\bar{P}_f, A, \{J_{\bar{b}}\})$ be the parametrized moduli space which consists of pairs $(u,\bar{b})$ such that
\begin{itemize}
\item $\bar{b} \in \bar{B}$
\item $u:\mathbb{P}^1_{\bar{b}} \to X_{\bar{b}}$ is $J_{\bar{b}}$-holomorphic section representing the class $A \in \im(H_2^{sec}(X_{\bar{b}}) \to H_2(\bar{P}_f))$
\end{itemize}
where $\mathbb{P}^1_{\bar{b}}$ is the fiber over $\bar{b}$ of the $\mathbb{P}^1$-bundle defined by quotienting \eqref{eq:Spherebun} by the remaining $T$-action, and $X_{\bar{b}}$ is the Seidel space associated to $\bar{b}$.
\end{defn}
The class $A$ is unambiguous because the structural group of an $\cF$-fibration acts trivially on the section classes (Lemma \ref{l:actSecTrivial}), and the same is true for the $T$-reduced $\cF$-fibration.  

The virtual dimension of $\cM_{0,2}(\bar{P}_f, A, \{J_{\bar{b}}\})$
is given by
\begin{align}
\vdim(\cM_{0,2}(\bar{P}_f, A, \{J_{\bar{b}}\}))=\dim(\bar{B})+2n+2c_1^{vert}(A) \label{eq:vdim}
\end{align}

\begin{lem}\label{l:Gcomp}
  For generic $\pi$-compatible family of almost complex structures $\{J_{\bar{b}}\}$, the moduli space $\cM_{0,2}(\bar{P}_f, A, \{J_{\bar{b}}\})$ is regular and admits a Gromov compactification $\ol{\cM}_{0,2}(\bar{P}_f, A, \{J_{\bar{b}}\})$ by adding strata of real codimension at least $2$.
\end{lem}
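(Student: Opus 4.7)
The plan is to treat this as a standard parametrized transversality and compactness problem for $J$-holomorphic sections in a Hamiltonian fibration, with the additional care imposed by restricting to the class of $\pi$-compatible almost complex structures. For regularity, I would form the universal parametrized moduli space $\cM^{univ}_{0,2}(\bar{P}_f, A)$ of triples $(u, \bar{b}, \{J_{\bar b}\})$, where $\{J_{\bar b}\}$ ranges over a Banach manifold $\mathcal{J}^\ell_\pi$ of $C^\ell$ $\pi$-compatible families of almost complex structures, and apply the Sard--Smale theorem to the forgetful projection $\cM^{univ}_{0,2}(\bar{P}_f, A) \to \mathcal{J}^\ell_\pi$. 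The Fredholm linearization $D_u$ of $\bar\partial$ on a section $u$ has index equal to the virtual dimension in \eqref{eq:vdim}, and any section is somewhere injective (in fact embedded) in $X_{\bar b}$ because the projection $\pi_{X_{\bar b}}: X_{\bar b} \to \mathbb{P}^1_{\bar b}$ retracts $u$ onto the base. After passing to a countable intersection over energies and applying the usual Taubes approximation to recover a smooth regular family, one obtains the required generic family of $\pi$-compatible almost complex structures.

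For compactness, Gromov compactness applies in a parametrized form: the energy $\int u^*\Omega_{\bar b}$ is bounded uniformly in terms of $[A] \cdot [\Omega_{\bar b}]$, and $\{\Omega_{\bar b}\}$ is continuous over the compact base $\bar B$. Any sequential Gromov limit of $(u_k, \bar b_k)$ with $\bar b_k \to \bar b_\infty$ is a stable nodal configuration whose principal component is a $J_{\bar b_\infty}$-holomorphic section in some class $A_0$ and whose remaining components are $J_{\bar b_\infty}$-holomorphic spheres. Because the projection $\pi_{X_{\bar b_\infty}}$ is $J_{\bar b_\infty}$-holomorphic and $\mathbb{P}^1_{\bar b_\infty}$ is already accounted for by the section, any non-section bubble must project to a constant, hence lie in a single fiber isomorphic to $(M,\omega)$.

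The key estimate, which is where the hypothesis that $(M,\omega)$ is monotone enters, is that each such boundary stratum has real codimension at least $2$. A stratum carrying a principal section in class $A_0$ together with $k$ simple bubbles of classes $A_1,\dots, A_k \in H_2(M)$ satisfying $A = A_0 + \sum_j A_j$ has expected dimension
\begin{equation*}
\dim(\bar B) + 2n + 2c_1^{vert}(A_0) + \sum_{j=1}^k \bigl(2n + 2c_1(M)\cdot A_j - 4\bigr) - k \cdot 2n,
\end{equation*}
where the $-4$ accounts for the reduction to unparametrized spheres modulo rotations fixing the nodal point and the $-2n$ enforces the matching condition; comparing with \eqref{eq:vdim}, the codimension is $\sum_j (2 c_1(M)\cdot A_j + 2 - 2) = \sum_j 2 c_1(M)\cdot A_j \geq 2$ since monotonicity forces $c_1(M)\cdot A_j \geq 1$ for every $J$-holomorphic class $A_j$. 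Multiply covered bubbles are controlled by applying the same estimate to their underlying simple sphere.

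The main subtlety I anticipate is verifying that the transversality argument can be carried out within the restricted class $\mathcal{J}^\ell_\pi$ of $\pi$-compatible families, since $\pi$-compatibility is a genuine constraint on $J$. This is handled by noting that any cokernel element of $D_u$ can be killed by a fiberwise (i.e.\ vertical) perturbation of $J$ supported on an open subset of $\mathbb{P}^1_{\bar b}$ disjoint from $\{0,\infty\}$, and such a perturbation preserves $\pi$-compatibility; this is the same mechanism used by McDuff--Salamon \cite{MSbook} in the non-parametrized Seidel setting, and the parametrization by $\bar B$ only enlarges the space of permissible perturbations.
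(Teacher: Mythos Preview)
Your approach is essentially the paper's: the authors give only a brief argument, noting that local trivializations allow Gromov compactness and that all bubble components must lie in fibers $(M,\omega)$, after which monotonicity and the standard theory of \cite[Chapters 6 and 8]{MSJbook} give the codimension estimate.  Your proposal spells out the underlying Sard--Smale and dimension-count arguments in more detail, which is fine.

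However, your dimension count contains an arithmetic slip.  With the formula you wrote for the stratum dimension, the codimension relative to \eqref{eq:vdim} is
\[
2c_1^{vert}(A) - 2c_1^{vert}(A_0) - \sum_j \bigl(2c_1(M)\cdot A_j - 4\bigr) = 4k,
\]
not $\sum_j 2c_1(M)\cdot A_j$ as you claim.  In fact your formula omits the $+2k$ real dimensions coming from the positions of the $k$ nodal points along the section; including them gives codimension $2k$.  Either way the conclusion ``codimension $\geq 2$'' survives, but as written the computation does not actually invoke monotonicity.  Monotonicity enters only when you pass to underlying simple spheres for multiply-covered bubbles: if $A_j = m_j A_j'$ with $A_j'$ simple, the relevant stratum is bounded in dimension by the moduli with simple classes $A_j'$, and the codimension becomes $2k + \sum_j 2(m_j-1)c_1(M)\cdot A_j'$, which is $\geq 2$ precisely because monotonicity forces $c_1(M)\cdot A_j' > 0$.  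Your final sentence (``apply the same estimate to the underlying simple sphere'') gestures at this, but the preceding display obscures where the hypothesis is actually used.
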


\begin{proof}
Given  any $\bar{b} \in \bar{B}$,  there is a small neighborhood $U_{\bar{b}}$ over which we can identify the fibers of $\bar{P}_f \to \bar{B}$ by a choice of local trivialization. 
Then by the smoothness of $\{J_{\bar{b}}\}$ and $\{\Omega_{\bar{b}}\}$,  as well as the energy uniform bound in this neighborhood,  we can apply Gromov compactness because we can regard all curves in fibers over $U_{\bar{b}}$ as lying in $X_{\bar{b}}$ using the local trivialization.  To verify the claim about the additional strata having real codimension at least two,  note that a point in the Gromov compactification is a genus $0$ stable maps to $X_{\bar{b}}$.
Since $A$ is a section class,  every component of the stable map except the main component maps into a fiber of $\pi_{\bar{b}}:X_{\bar{b}} \to \mathbb{P}^1_{\bar{b}}$.
The fiber is symplectomorphic to $(M,\omega)$ so the monotonicity of $M$ guarantees that these stable maps form lower dimensional strata of real codimension at least $2$ (cf. \cite[Chapter 6 and 8]{MSJbook}).
\end{proof}

We denote the point in $\mathbb{P}^1_{\bar{b}}$ coming from the origin of $D^2_0$ be $z_0$, and the one coming from the origin of $D^2_{\infty}$ be  $z_{\infty}$.
We denote the corresponding evaluation maps $\ol{\cM_{0,2}}(\bar{P}_f, A, \{J_{\bar{b}}\}) \to \bar{P}_f$ by $\ev_0$ and $\ev_{\infty}$, respectively.
By Lemma \ref{l:Gcomp}, the evaluation maps define pseudo-cycles on $\bar{P}_f$.

We define the following linear map
\begin{align}
\cC_{\bar{f}}: &H_*((\bar{P}_f)_0)[q^{\pm 1}] \to H_*((\bar{P}_f)_{\infty})[q^{\pm 1}] \label{eq:correlationf}\\
& x \mapsto  \sum_{A\in \im(H_2^{sec}(X_{\bar{b}}) \to H_2(\bar{P}_f))} [\ev_{\infty}: \ol{\cM_{0,2}}(\bar{P}_f, A, \{J_b\}) \times_{\ev_0} Z_x \to (\bar{P}_f)_{\infty}] q^{-c_1^{vert}(A)}
\end{align}
where $\times_{\ev_0} Z_x$ stands for fiber product between $\ev_0$ and a pseudocycle $Z_x$ in $(\bar{P}_f)_0$ representing $x$. 
Because of \eqref{eq:vdim} and $\deg(q)=2$, $\cC_{\bar{f}}$ is degree-preserving.

\begin{lem}\label{l:correlationmap}
The map $\cC_{\bar{f}}$ is well-defined.  In other words, it is independent of the choice of $\{J_{\bar{b}}\}$, $\{\Omega_{\bar{b}}\}$ and the choice of pseudocycle representing $x$ as long as they are generic.
\end{lem}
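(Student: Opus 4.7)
The plan is the standard cobordism argument in parametrized Gromov--Witten theory, carried out in three steps corresponding to the three choices: the pseudocycle $Z_x$ representing $x$, the almost complex structure family $\{J_{\bar{b}}\}$, and the family of symplectic forms $\{\Omega_{\bar{b}}\}$.

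First I would fix $\{\Omega_{\bar{b}}\}$ and $\{J_{\bar{b}}\}$ generic, and show that the result is independent of the pseudocycle $Z_x$. Two pseudocycles $Z_x, Z_x'$ representing the same class $x \in H_\ast((\bar{P}_f)_0)$ are related by a pseudocycle cobordism $W$ (in the sense used in the definition of homology via pseudocycles). After a further generic perturbation of $W$, the fiber product $\ev_\infty : \overline{\cM}_{0,2}(\bar{P}_f,A,\{J_{\bar{b}}\}) \times_{\ev_0} W \to (\bar{P}_f)_\infty$ is a pseudocycle cobordism between the two contributions, where the codimension-$2$ boundary strata in Lemma \ref{l:Gcomp} guarantee that no unwanted boundary appears. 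Summing over $A$ (which is a locally finite sum for each fixed degree by monotonicity) gives equality in $H_\ast((\bar{P}_f)_\infty)[q^{\pm 1}]$.

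Next I would fix $\{\Omega_{\bar{b}}\}$ and consider two generic choices $\{J_{\bar{b}}^0\}$ and $\{J_{\bar{b}}^1\}$ of $\pi$-compatible almost complex structure families. Because the space of such families is weakly contractible, we can connect them by a smooth path $\{J_{\bar{b}}^t\}_{t \in [0,1]}$, and after a generic perturbation of the path (rel endpoints) the one-parameter parametrized moduli space
\[
\cM_{0,2}^{[0,1]}(\bar{P}_f, A) := \{(t, \bar{b}, u) : t \in [0,1],\ (\bar{b}, u) \in \cM_{0,2}(\bar{P}_f, A, \{J_{\bar{b}}^t\})\}
\]
is a smooth manifold with boundary the disjoint union of $\cM_{0,2}(\bar{P}_f, A, \{J_{\bar{b}}^0\})$ and $\cM_{0,2}(\bar{P}_f, A, \{J_{\bar{b}}^1\})$. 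The same Gromov compactness and monotonicity argument as in Lemma \ref{l:Gcomp} applied fiberwise in $t$ shows that the additional strata in the compactification of $\cM_{0,2}^{[0,1]}$ have real codimension at least two; thus the fiber product with $Z_x$ gives a pseudocycle cobordism $(\bar{P}_f)_\infty$ between $\ev_\infty|_{t=0}$ and $\ev_\infty|_{t=1}$.

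Finally, to handle two choices $\{\Omega_{\bar{b}}^0\}, \{\Omega_{\bar{b}}^1\}$ of admissible families of symplectic forms, I would use the weak contractibility of admissible families to connect them by a smooth path $\{\Omega_{\bar{b}}^t\}_{t \in [0,1]}$, and then choose a generic path $\{J_{\bar{b}}^t\}$ with $J_{\bar{b}}^t$ being $\pi$-compatible with $\Omega_{\bar{b}}^t$ (again possible because the space of such compatible pairs is a fibration with weakly contractible fibers). Running the one-parameter argument of the previous paragraph then produces the desired cobordism, which completes the proof.

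The main technical obstacle will be the transversality/compactness for the one-parameter families: we need the universal moduli space to be cut out transversely by a generic path $\{J_{\bar{b}}^t\}$ while keeping the endpoint data fixed, and we need to verify that the failure of compactness still only occurs in codimension at least two uniformly in $t$. The first is handled by a standard Sard--Smale argument in the space of smooth paths rel endpoints among $\pi$-compatible families (using the weak contractibility cited after Definition of $\pi$-compatible families). The second follows from the fiberwise monotonicity of $M$, exactly as in Lemma \ref{l:Gcomp}, together with the fact that the energies of $J_{\bar{b}}^t$-holomorphic sections representing a fixed class $A$ remain uniformly bounded in $(t,\bar{b})$.
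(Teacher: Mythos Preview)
Your approach is essentially the same as the paper's: a standard cobordism argument handling each of the three choices in turn, relying on monotonicity of $M$ to control the boundary strata. The paper orders the steps differently ($J$, then $\Omega$, then $Z_x$), but this is immaterial.

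The one place where your proposal is thinner than the paper's proof is the finiteness of the sum over $A$, which you dispatch parenthetically as ``locally finite for each fixed degree by monotonicity.'' The paper treats this as a genuine step in showing $\cC_{\bar f}$ is well-defined, and the argument has two parts you should make explicit. First, the dimension formula \eqref{eq:vdim} bounds the range of $c_1^{vert}(A)$ for which the output pseudocycle can have degree between $0$ and $\dim((\bar P_f)_\infty)$. Second, and less obvious, for each fixed value $k$ of $c_1^{vert}$ there may a priori be infinitely many section classes $A$; one uses monotonicity to show that any two such classes differ by a fiber class with $\omega=0$, hence have equal $\Omega_{\bar b}$-area, so Gromov compactness forces $\bigcup_{c_1^{vert}(A)=k}\overline{\cM}_{0,2}(\bar P_f,A,\{J_{\bar b}\})$ to be compact. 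Without both pieces the sum defining $\cC_{\bar f}(x)$ is not obviously an element of $H_*((\bar P_f)_\infty)[q^{\pm 1}]$.

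A minor point: for the path of symplectic forms you invoke ``weak contractibility of admissible families,'' but recall from \S\ref{ss:parametrizedmoduli} that the families $\{\Omega_{\bar b}\}$ on $\bar P_f$ are those descended from $T$-invariant admissible families on $P_f$; it is the connectedness of \emph{that} space (established by averaging) that you need.
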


\begin{proof}

  For a fixed $A$, a standard cobordism argument shows that the pseudo-cycles defined by $\ev_{0/\infty}: \ol{\cM}_{0,2}(\bar{P}_f, A, \{J_{\bar{b}}\}) \to \bar{P}_f$ is independent of the choice of a regular family $\{J_{\bar{b}}\}$ (cf. \cite[Chapter 6 and 8]{MSJbook}) because $(M,\omega)$ is monotone.

Since the space of $T$-invariant admissible family of symplectic forms $\{\Omega_{b}\}_{b \in B}$ is connected, the space of family of symplectic form $\{\Omega_{\bar{b}}\}_{\bar{b} \in \bar{B}}$ descended from $T$-invariant admissible family is also connected.
Therefore, we can apply a further cobordism argument to show that  the pseudo-cycles are also 
 independent of the choice of $\{\Omega_{\bar{b}}\}_{\bar{b} \in \bar{B}}$  (cf. \cite[Remark 7.1.11]{MSJbook}, \cite[Lemma 3.8]{Sav}).

 A further cobordism argument shows that the $[\ev_{\infty}: \ol{\cM}_{0,2}(\bar{P}_f, A, \{J_b\}) \times_{\ev_0} Z_x \to (\bar{P}_f)_{\infty}] $ is also independent of the choice of a pseudocycle $Z_x$ representing $x$.

Finally, we need to argue that the sum in \eqref{eq:correlationf} is finite.
Let $\mathcal{A}_k$ be the collection of classes in
$\im(H_2^{sec}(X_{\bar{b}}) \to H_2(\bar{P}_f))$ such that $c_1^{vert}=k$.
For any two classes $A,A' \in \mathcal{A}_k$, there difference is a fiber class $F$ with $c_1(F)=0$.
By monotonicity of $(M,\omega)$, $\omega(F)=0$ and hence $\Omega_{\bar{b}}(A)=\Omega_{\bar{b}}(A')$.
By Gromov compactness, the union 
$\cup_{A \in \mathcal{A}_k} \ol{\cM}_{0,2}(\bar{P}_f, A, \{J_{\bar{b}}\})$ is compact.
 By the dimension formula \eqref{eq:vdim}, the pseudo-cycle $[\ev_{\infty}: \ol{\cM}_{0,2}(\bar{P}_f, A, \{J_b\}) \times_{\ev_0} Z_x \to (\bar{P}_f)_{\infty}] $ has dimension
\begin{align*}
2c_1^{vert}(A)+\deg(x)
\end{align*}
which is lying in between $0$ and $\dim((\bar{P}_f)_{\infty})=\dim(\bar{B})+2n$ only if
\begin{align}
-\deg(x) \le 2c_1^{vert}(A) \le \dim(\bar{B})+2n-\deg(x). \label{eq:possibleContribution}
\end{align}
Therefore, only finitely many $c_1^{vert}(A)$ can contribute to $\cC_{\bar{f}}(x)$.


\end{proof}

Later on, when we take inverse limit in the Borel direction, we will consider a sequence of $\bar{B}$ with increasing dimension.
Therefore, the number of values of $c_1^{vert}(A)$ satisfying equation \eqref{eq:possibleContribution} will diverge to infinity when the dimension of $\bar{B}$ increases to infinity.
A priori,  this would be a problem and force us to work with a completion of $\Lambda=\mathbf{k}[q^{\pm 1}]$.
However, since we are going to work with the semi-infinite homology, 
the number $d(x):=\dim(\bar{B})+2n-\deg(x)$ will be fixed when we take inverse limit in the Borel direction.
It means that \eqref{eq:possibleContribution} becomes 
\begin{align*}
-\dim(\bar{B})-2n+d(x) \le 2c_1^{vert}(A) \le d(x).
\end{align*}
and,  a priori, we have an upper bound of $c_1^{vert}(A)$ but not a lower bound when $\dim(\bar{B})$ goes to infinity.
However, if we choose $\{\Omega_{\bar{b}}\}_{\bar{b} \in \bar{B}}$ more carefully,  there is a uniform upper bound of $\Omega_{\bar{b}}(A)$ depending only on $c_1^{vert}(A)$.
Moreover, for sufficiently negative $c_1^{vert}(A)$, we have $\Omega_{\bar{b}}(A)<0$ and hence the moduli $\ol{\cM}_{0,2}(\bar{P}_f, A, \{J_{\bar{b}}\})$ will be empty.  This allows us to work over $\Lambda=\bk[q^{\pm 1}]$ as opposed to its completion.  The precise statement is the following lemma.

\begin{lem}\label{r:inverselimit}
Let $K \subset LG$ be a compact set that is $S^1 \times T \times T$-invariant.
There is a positive number $N$ depending only on $K$ such that for any smooth map
$\bar{f}:\bar{B} \to  (LG/T)_{borel}$ with image $im(\bar{f}) \subset (K/T)_{borel}$,
the moduli space $\ol{\cM}_{0,2}(\bar{P}_f, A, \{J_b\})$ is null-cobordant
if $c_1^{vert}(A) < -N$.
\end{lem}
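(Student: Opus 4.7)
The plan is to exploit monotonicity of $(M,\omega)$ to show that, for a careful choice of $\{\Omega_{\bar b}\}$, the symplectic area $\Omega_{\bar b}(A)$ of a section class $A$ in any fiber $X_{\bar b}$ is an affine function of $c_1^{vert}(A)$ whose intercept is uniformly bounded over $\bar b$ lying in $\bar f^{-1}((K/T)_{borel})$. Once such a bound is in hand, sufficiently negative $c_1^{vert}(A)$ forces $\Omega_{\bar b}(A)<0$, and $\Omega_{\bar b}$-tameness of $J_{\bar b}$ forces $\ol{\cM}_{0,2}(\bar P_f,A,\{J_{\bar b}\})$ to be empty, hence trivially null-cobordant.

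The key observation is that any two section classes $A,A'$ in a fiber $X_{\bar b}$ differ by a fiber class $F\in H_2(M)$, since both project to the generator of $H_2(\mathbb{P}^1_{\bar b})$. Monotonicity $[\omega]=\lambda[c_1(M)]$ then gives $\Omega_{\bar b}(A)-\lambda c_1^{vert}(A)=\Omega_{\bar b}(A')-\lambda c_1^{vert}(A')$, so the quantity
\[
C(\bar b):=\Omega_{\bar b}(A)-\lambda c_1^{vert}(A)
\]
is well-defined independently of the section class $A$, and depends only on $\bar b$ (and on the chosen family $\{\Omega_{\bar b}\}$).

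I would then take $\{\Omega_{\bar b}\}$ to be pulled back from the $S^1\times T\times T$-averaged family $\{\widetilde\Omega^{borel}_\gamma\}$ of Example~\ref{e:admissSym}, modulo an arbitrarily small smoothing perturbation. Because this family is built by explicit formulas together with averaging over a compact group, it depends continuously on $\gamma\in LG$; consequently $\gamma\mapsto C(\gamma)$ is continuous on $LG$ and $S^1\times T\times T$-invariant, hence descends to a continuous function on $K/T$. Since $K$ is compact, $C$ is uniformly bounded above by some constant $C_0(K)$ depending only on $K$. The point is that $C_0(K)$ is extracted once and for all from the universal family $U/T$ and does not depend on the particular cycle $\bar f:\bar B\to(LG/T)_{borel}$.

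Setting $N:=\lceil C_0(K)/\lambda\rceil+1$ then finishes the argument: if $c_1^{vert}(A)<-N$ then $\Omega_{\bar b}(A)=\lambda c_1^{vert}(A)+C(\bar b)<0$ for every $\bar b\in\bar B$, while any $J_{\bar b}$-holomorphic section $u$ must satisfy $\int u^{*}\Omega_{\bar b}\ge 0$ by $\Omega_{\bar b}$-tameness, a contradiction. The only real subtlety is ensuring that the smoothing perturbation does not destroy the uniform bound; this holds because the perturbation can be chosen $C^0$-small and $C(\bar b)$ depends continuously and linearly on the symplectic form, so adjusting it shifts $C_0(K)$ by an amount that can be absorbed into $N$.
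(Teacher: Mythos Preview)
Your proposal is correct and follows essentially the same approach as the paper: both use monotonicity to reduce the area of a section class to an affine function of $c_1^{vert}$, bound the intercept uniformly using compactness of $K$ and the explicit averaged forms $\widetilde\Omega^{aver}_\gamma$, and then argue that a small smoothing perturbation can be absorbed into the constant $N$. Your packaging via the invariant $C(\bar b)=\Omega_{\bar b}(A)-\lambda c_1^{vert}(A)$ is slightly cleaner than the paper's choice of a fixed reference section class $A_0$, but the content is identical. One small point you leave implicit: you show the moduli space is empty only for the particular $\{\Omega_{\bar b}\}$ you constructed, whereas the lemma asserts null-cobordance for an arbitrary (generic) choice; the paper closes this by explicitly invoking the cobordism invariance from Lemma~\ref{l:correlationmap}, and you should do the same.
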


\begin{proof}
Let $A_0 \in \im(H_2^{sec}(X)\to H_2(\bar{P}_f))$ (regarded as a reference section class).
Recall $\widetilde{\Omega}^{aver}_{\gamma}$ from Example \ref{e:admissSym}.
Since $K$ is compact, there is $U \in \mathbb{R}$ 
such that for all $\gamma \in K$, we have $\widetilde{\Omega}^{aver}_{\gamma}(A_0) \le U$. 
Therefore, we have $\widetilde{\Omega}^{borel}_{\gamma}(A_0) \le U$ for any $\gamma \in K_{borel} \subset (LG)_{borel}$.

Let $N=\frac{U}{\lambda}-c_1^{vert}(A_0) +1$ where $\lambda$ is the monotonicity constant such that $[\omega]=\lambda c_1(M)$.

Let $\bar{f}:\bar{B} \to  (LG/T)_{borel}$ be a smooth map that lies in $(K/T)_{borel}$.
If $\{\Omega_{\bar{b}}'\}_{\bar{b} \in \bar{B}}$ is the pull-back of $\{\widetilde{\Omega}^{borel}_{\gamma}\}_{\gamma}$, then $\Omega_{\bar{b}}'(A_0) \le U$.
Any section class $A$ can be uniquely written as $A=A_0+A_F$ where $A_F \in H_2(M,\mathbb{Z})$.
Therefore, we have 
\begin{align*}
\Omega_{\bar{b}}'(A) \le U+\omega (A_F)=U+\lambda c_1(A_F)=U+\lambda (c_1^{vert}(A)-c_1^{vert}(A_0))
\end{align*}
so $\Omega_{\bar{b}}'(A)<0$ when $c_1^{vert}(A)<-N<-(\frac{U}{\lambda}-c_1^{vert}(A_0))$.

However, $\{\Omega_{\bar{b}}'\}_{\bar{b} \in \bar{B}}$ is not necessarily smooth with respect to $\bar{b}$ so we need to perturb it to make it smooth.
We claim that after a sufficiently small perturbation, we have an admissible $\{\Omega_{\bar{b}}\}_{\bar{b} \in \bar{B}}$
such that $\Omega_{\bar{b}}(A_0) \le U+\lambda$.
Therefore, $\Omega_{\bar{b}}(A)$ is bounded above by $U+\lambda (c_1^{vert}(A)-c_1^{vert}(A_0))+\lambda$.
It implies that  $\Omega_{\bar{b}}(A)<0$ when $c_1^{vert}(A)<-N$.
Therefore, $\ol{\cM_{0,2}}(\bar{P}_f, A, \{J_b\})$ is an empty set for this choice of $\Omega_{\bar{b}}$.
By cobordism invariance (cf. Lemma \ref{l:correlationmap}), the result follows.

To verify the claim, we consider the map $F=(F_H,F_S,F_T):Y \to LG \times S^{\infty} \times ET$ associated to $\bar{f}$ (see \eqref{eq:Fmap}).
We also consider the family of symplectic structures $\{\widetilde{\Omega}_{F_H(y)}^{aver}\}_{y \in Y}$ on $P_F$ obtained by pulling-back $\{\widetilde{\Omega}_{\gamma}^{aver}\}_{\gamma \in LG}$.
By construction, the family $\{\widetilde{\Omega}_{F_H(y)}^{aver}\}_{y \in Y}$ descends to $\{\Omega_{\bar{b}}'\}_{\bar{b} \in \bar{B}}$.
Over $Y \times M \times D^2_0$, $\{\widetilde{\Omega}_{F_H(y)}^{aver}\}_{y \in Y}$ is given by
\begin{align*}
\widetilde{\Omega}_{F_H(y)}^{aver}= \omega + \epsilon 2r dr \wedge d \theta
\end{align*}
Over $Y \times M \times D^2_{\infty}$, it is given by
\begin{align}
\widetilde{\Omega}_{F_H(y)}^{aver}= \omega + \beta_1^{aver}(F_H(y))+\beta_2^{aver}(F_H(y)) \label{eq:SMoothing2}
\end{align}
where $\beta_1, \beta_2$ are given by \eqref{eq:beta1}, \eqref{eq:beta2} and $\beta_i^{aver}$ is the average of $\beta_i$ over the $\hat{T} \times T$-action (see Example \ref{e:admissSym}).
The reason that $\{\widetilde{\Omega}_{F_H(y)}^{aver}\}_{y \in Y}$ is not necessarily smooth is because there is a term $\max_x H^{F_H(y)}_{\theta}(x)$ in $\beta_2(F_H(y))$ and this term might not vary smoothly with $y \in Y$.
We can pick a smooth function $c:Y \times D^2_{\infty} \to \mathbb{R}$ such that 
\begin{align}
c(y,re^{2\pi i \theta}) \ge \max_x H^{F_H(y)}_{\theta}(x) \label{eq:upsmooth}
\end{align}
 for all $y$ and $\theta$.
If we replace the term $\max_x H^{F_H(y)}_{\theta}(x)$ in $\beta_2(F_H(y))$ by $c$ and denote the resulting $2$-form by $\beta_{2,c}$, then the average $\beta_{2,c}^{aver}(F_H(y))$ over the $\hat{T} \times T$-action is smooth over $Y \times M \times D^2_{\infty}$.
We can therefore replace the term $\beta_2^{aver}(F_H(y))$ in \eqref{eq:SMoothing2} by $\beta_{2,c}^{aver}(F_H(y))$ to get a smooth family of fiberwise $2$-forms on $P_F$.
The inequality \eqref{eq:upsmooth} guarantees that this smooth family of $S^1 \times T \times T$-invariant forms descends to an admissible family of symplectic forms for $\bar{P}_f \to \bar{B}$.
Moreover, by choosing $c$ to be sufficiently $C^0$-close to $\max_x H^{F_H(y)}_{\theta}(x)$, we can achieve our claim.


\end{proof}

Let $\bar{B}$ be closed and $\bar{f}:\bar{B} \to (LG/T)_{borel,n} \subset (LG/T)_{borel}$ be smooth.
Following the overview in Section \ref{ss:overviewSeidelMap}, we consider the composition of the following maps (which plays the role analogous to $\cC_M \circ \cQ$)
and denote it by $\cS_{\bar{f},n}$.
\begin{align}
\hat{H}_*(\bar{B}) \times \hat{H}_*^{\hat{T}}(M)[q^{\pm 1}] \simeq&  \hat{H}_{*+\dim(S^1 \times T \times T)}^{S^1 \times T \times T}(Y) \times \hat{H}_*^{\hat{T}}(M)[q^{\pm 1}] \label{eq:Se1}\\
\simeq& \hat{H}^{S^1 \times T \times T \times S^1 \times T}_{*+\dim(S^1 \times T \times T)}(Y \times M)[q^{\pm 1}] \label{eq:Se2}\\
\to & \hat{H}^{S^1_{\Delta} \times T \times T_{\Delta}}_{*+\dim(S^1 \times T \times T)}(Y \times M)[q^{\pm 1}] \label{eq:Se3}\\
\simeq &\hat{H}_*((\bar{P}_{f_k})_0)[q^{\pm 1}] \label{eq:Se4}\\
\to &\hat{H}_*((\bar{P}_{f_k})_{\infty})[q^{\pm 1}] \label{eq:Se5}\\
\to & \hat{H}_{*}((S_{T,\infty})_{borel, n})[q^{\pm 1}]=\hat{H}_{*}((LG/T \times M)_{borel,n})[q^{\pm 1}] \label{eq:Se6}\\
\to  &\hat{H}_*(M_{borel,n})[q^{\pm 1}] \label{eq:Se7}
\end{align}
The isomorphisms \eqref{eq:Se1} and \eqref{eq:Se2} follow from Lemma \ref{lem: freeactions}
and the K\"unneth formula.
The map \eqref{eq:Se3} is induced by the restriction to subgroup as in \eqref{eq:mult1}.
The isomorphism \eqref{eq:Se4} follows from Lemma \ref{lem: freeactions}.
The map \eqref{eq:Se5} is $\cC_{\bar{f}}$.
The maps \eqref{eq:Se6} and \eqref{eq:Se7} are induced by the natural maps $(\bar{P}_{f_k})_{\infty} \to (S_{T,\infty})_{borel, n}$ (cf. \eqref{eq:PtoSinf})
and $(LG/T \times M)_{borel,n} \to M_{borel,n}$, respectively.



\subsection{Digression: geometric homology}

The affine Schubert stratification gives a $\hat{T}$ CW complex structure on $L_{poly}G/T$. We denote by $(L_{poly}G/T)_{\le d}$ the union of cells with dimension less than $d$.
We have (see \eqref{eq: inverselimit}, \eqref{eq:directlim})
\begin{align*}
\hat{H}^{\hat{T}}_*(L_{poly}G/T) :=& \varinjlim_d \hat{H}^{\hat{T}}_*((L_{poly}G/T)_{\leq d})\\
=&\varinjlim_d \varprojlim_n H_ {\operatorname{dim} B\hat{T}_n+ \ast}((L_{poly}G/T)_{\le d, borel,n}) 
\end{align*}
We are going to describe a model of $H_ {\operatorname{dim} B\hat{T}_n+ \ast}((L_{poly}G/T)_{\le d, borel,n})$ called geometric homology \cite{Jak98}.

Let $N$ be a finite CW complex. 
We consider triples $(B,\alpha,f)$ such that $B$ is a closed oriented manifold, $\alpha \in H^*(B,\mathbb{Z}),$
and $f:B \to N$ is a continuous map.
Two such triples $(B,\alpha,f)$ and $(B',\alpha',f')$ are called {\it equivalent} if there is an orientation preserving diffeomorphism
$\phi:B' \to B$ such that $\phi^* \alpha=\alpha'$ and $f'=f \circ \phi$.
The geometric homology $H^{geo}_*(N,\mathbb{Z})$ is defined to be the free abelian group generated by equivalent classes of triples
modulo the following relations
\begin{itemize}
\item if $B=B_1 \sqcup B_2$, then $(B,\alpha,f)=(B_1, \alpha|_{B_1},f|_{B_1})+(B_2, \alpha|_{B_2},f|_{B_2})$
\item $(B,\alpha_1+\alpha_2,f)=(B,\alpha_1,f)+(B,\alpha_2,f)$
\item if there exist an oriented manifold $B'$ with boundary $\partial B'=B$ as oriented manifolds, then for any $\alpha' \in H^*(B',\mathbb{Z})$ and $f':B'\to N$, we have $(B, \alpha'|_{B}, f'|_{B})=0$
\item let $E$ be an oriented rank $r$ vector bundle over $B$, $S(E \oplus \ul{1})$ be the sphere bundle of the direct sum of $E$ and the rank $1$ trivial bundle $\ul{1}$, $\sigma:B \to S(E \oplus 1)$ be the section coming from $(0,1) \in E \oplus \ul{1}$ and $\pi:S(E \oplus \ul{1}) \to B$ be the projection.
Then we have $(B, \alpha,f)=(S(E \oplus 1), \sigma_!(\alpha), f\circ \pi)$, where $\sigma_!: H^*(B) \to H^{*+r}(S(E \oplus 1))$ is the Gysin homomorphism.
\end{itemize} 

\begin{thm}[Proposition 2.3.2 of \cite{Jak98}]\label{t:Jak}
The map $\psi: H^{geo}_*(N,\mathbb{Z}) \to H_*(N,\mathbb{Z})$ defined by
\begin{align*}
(B, \alpha,f) \mapsto f_*(\alpha \cap [B])
\end{align*}
is well-defined and is an isomorphism.
\end{thm}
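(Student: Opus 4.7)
The plan is to split the proof into (i) well-definedness, i.e., checking that the four defining relations on triples are all sent to zero by $\psi$, and (ii) bijectivity, which is most cleanly handled by verifying that $H^{geo}_*$ defines an ordinary homology theory (à la Eilenberg-Steenrod) and then invoking the uniqueness theorem for such theories on the category of CW pairs.

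For well-definedness, the disjoint union and $\alpha$-additivity relations follow immediately from the bilinearity of the cap product and the compatibility of pushforward with disjoint unions, so the substance lies in the remaining two relations. For the boundary relation, suppose $B = \partial B'$ with $f = f'|_B$ and $\alpha = \alpha'|_B$. Let $[B', B] \in H_{\dim B'}(B', B)$ denote the relative fundamental class, so that the connecting homomorphism satisfies $\partial[B',B]=[B]$. By naturality of cap product with the connecting map,
\begin{equation*}
\alpha|_B \cap [B] \;=\; \partial(\alpha' \cap [B', B]),
\end{equation*}
which lies in the image of $\partial$, hence in the kernel of $i_*: H_*(B) \to H_*(B')$. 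Since $f|_B = f' \circ i$, we conclude that $(f|_B)_*(\alpha|_B \cap [B]) = 0$ in $H_*(N)$. For the sphere bundle relation, one uses the projection formula for the oriented sphere bundle $\pi: S(E\oplus \underline{1}) \to B$: integration over the fiber $\pi_!$ is left-inverse to $\sigma_!$ since $\sigma$ is a smooth section, giving $\pi_*\bigl(\sigma_!(\alpha) \cap [S(E \oplus \underline{1})]\bigr) = \alpha \cap [B]$, after which post-composition with $f_*$ matches the two sides.

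For the isomorphism statement, the strategy is to extend $H^{geo}_*$ to a functor on CW pairs $(N,A)$ by allowing triples $(B, \alpha, f)$ with $B$ a compact oriented manifold with boundary and $f:(B,\partial B) \to (N,A)$, and to verify the Eilenberg-Steenrod axioms. Functoriality and homotopy invariance for $H^{geo}_*$ are direct from the definitions; the long exact sequence of a pair is built geometrically, with connecting map sending $(B,\alpha,f)$ to $(\partial B, \alpha|_{\partial B}, f|_{\partial B})$, and exactness is a consequence of the bordism relation. Agreement with singular homology on a point is also straightforward: any triple $(B,\alpha,f:B\to \mathrm{pt})$ with $\dim\alpha=\dim B$ is reduced via the sphere bundle relation (applied to any trivial bundle) and the bordism relation to an integer multiple of a point, so $H^{geo}_0(\mathrm{pt})=\mathbb{Z}$ matches $H_0(\mathrm{pt})$, and all higher groups vanish.

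The main obstacle is verifying the excision axiom (or equivalently Mayer-Vietoris) for $H^{geo}_*$. This requires a geometric transversality argument: given a triple $(B,\alpha,f)$ for $(N,A)$ with $N = A \cup U$, one must homotope $f$ and cut $B$ along $f^{-1}(\partial U)$ to replace $(B,\alpha,f)$ by a sum of triples whose supports lie separately in $A$ and in $U$. Carrying this out requires approximating $f$ smoothly on a collar of the preimage and invoking a bordism between the original triple and its cut version, built from a mapping cylinder and the sphere bundle relation to propagate the cohomology class $\alpha$ across the surgery. Once excision is in hand, the comparison map $\psi$ is a natural transformation of ordinary homology theories that is an isomorphism on the point, and hence an isomorphism on all CW complexes $N$ by the standard induction on cells.
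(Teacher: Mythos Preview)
Your well-definedness argument is correct and essentially what any proof must do. Note, however, that the paper does not give its own proof of this theorem: it is quoted verbatim from Jakob \cite{Jak98}, and the only comment the paper makes is that in Jakob's proof ``both surjectivity and injectivity of $\psi$ are proved by reducing it to the smooth category using smooth approximation theorems.'' So the approach on record is a \emph{direct} one: Jakob shows $\psi$ is onto by realizing singular cycles as images of manifold triples (after smooth approximation), and shows it is injective by showing that any triple mapping to zero becomes, after smooth approximation, a boundary modulo the sphere-bundle relation.

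Your route via the Eilenberg--Steenrod axioms is genuinely different. It is a legitimate strategy---once one has a natural transformation of homology theories that is an isomorphism on a point, uniqueness on CW pairs does the rest---but it trades a short direct argument for the overhead of setting up a relative theory and verifying all the axioms. The cost shows up exactly where you flag it: excision. Your sketch (``homotope $f$ and cut $B$ along $f^{-1}(\partial U)$'') tacitly assumes one can make $f$ transverse to something like $\partial U$, but $N$ is only a CW complex and $\partial U$ need not be a submanifold. The standard fix is to embed $N$ in Euclidean space, replace the open cover by preimages of half-spaces under a projection, and cut along a regular value; this works, but it is precisely the smooth-approximation maneuver that Jakob uses directly to prove surjectivity and injectivity, so the axiomatic detour does not actually avoid the hard step. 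Your dimension-axiom sketch is also thinner than it looks: reducing an arbitrary $(B,\alpha,f\colon B\to\mathrm{pt})$ with $\deg\alpha=\dim B$ to a multiple of a point requires a nontrivial use of both the bordism and sphere-bundle relations (the sphere-bundle relation as stated increases dimension, so one must run it in reverse, which needs an argument).

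In short: your plan is sound and would succeed, but it is longer than, and at its core relies on the same transversality/approximation input as, the direct surjectivity-plus-injectivity proof the paper cites from Jakob.
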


Although not explicitly stated in \cite{Jak98}, the result remains valid if we impose an additional assumption that $B$ and $E$ are smooth (but $f$ is only continuous because $N$ does not have a smooth structure).
To see this, denote this variant by $H^{geo,sm}_*(N,\mathbb{Z})$.
Since any relation in the smooth category is a relation in the topological category, there is a well-defined map
$\psi^{sm}: H^{geo,sm}_*(N,\mathbb{Z}) \to H_*(N,\mathbb{Z})$ as above.
In the proof of Proposition 2.3.2 (\cite[p.78-80]{Jak98}), both surjectivity and injectivity of $\psi$
are proved by reducing it to the smooth category using smooth approximation theorems so the same proof goes through for $H^{geo,sm}_*(N,\mathbb{Z})$.




\subsection{Main construction (continued)}

We are going to construct a map 
\begin{align*}
\varinjlim_d \varprojlim_n H^{geo,sm}_ {\operatorname{dim} B\hat{T}_n+ \ast}((L_{poly}G/T)_{\le d, borel,n}) \to \End(\hat{H}_*^{\hat{T}}(M)[q^{\pm 1}])
\end{align*}

Let $(\bar{B},\bar{\alpha},\bar{f})$ be a representative of 
$ H^{geo,sm}_ {\operatorname{dim} B\hat{T}_n+ \ast}((L_{poly}G/T)_{\le d, borel,n})$.
By Lemma \ref{l:smoothApproximation}, there is a sequence of smooth maps $\bar{f}_k:\bar{B} \to (LG/T)_{borel,n}$ which $C^0$-converges to $\bar{f}$.
For each $k$, we have a linear map (see \eqref{eq:Se1} to \eqref{eq:Se7})
\begin{align}
\cS_{\bar{f}_k,n}( PD(\bar{\alpha}), -): \hat{H}_*^{\hat{T}}(M)[q^{\pm 1}] \to \hat{H}_*(M_{borel,n})[q^{\pm 1}] \label{eq:Seidelkn}
\end{align}

\begin{lem}\label{l:SeGeoMap}
The map $\cS_{\bar{f}_k,n}( PD(\bar{\alpha}), -)$ is independent of $k$ for sufficiently large $k$.
Moreover, the $\mathbb{Z}$-linear map 
\begin{align}
H^{geo,sm}_ {\operatorname{dim} B\hat{T}_n+ \ast}((L_{poly}G/T)_{\le d, borel,n}) &\to \Hom(\hat{H}_*^{\hat{T}}(M)[q^{\pm 1}], \hat{H}_*(M_{borel,n})[q^{\pm 1}]) \label{eq:geosmSeidel1}\\
[\bar{B},\bar{\alpha},\bar{f}] & \mapsto  \lim_k \cS_{\bar{f}_k,n}( PD(\bar{\alpha}), -) \label{eq:geosmSeidel2}
\end{align}
is well-defined. In \eqref{eq:geosmSeidel1}, $\Hom$ refers to the space of degree preserving $\bk$-linear maps.
\end{lem}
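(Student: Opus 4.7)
The plan has two steps: first, show that $\cS_{\bar{f}_k,n}(PD(\bar{\alpha}), -)$ stabilizes for large $k$, and second, check that the resulting assignment respects the four defining relations of $H^{geo,sm}_*$. For the first step, for sufficiently large $k, k'$ the smooth approximations $\bar{f}_k, \bar{f}_{k'}$ both lie in an arbitrarily small $C^0$-neighborhood of $\bar{f}(\bar{B})$. Using the second part of Lemma \ref{l:smoothApproximation} (applied to the constant homotopy $\bar{c}(t,\bar{b}) = \bar{f}(\bar{b})$ with endpoints $\bar{f}_k, \bar{f}_{k'}$ fixed), we may connect them by a smooth homotopy $\bar{c}: [0,1] \times \bar{B} \to (LG/T)_{borel,n}$ whose image lies in a $C^0$-neighborhood of $\bar{f}(\bar{B})$ as small as we wish. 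The associated lift to $LG$ then takes values in an $S^1 \times T \times T$-saturated compact set $K$, so Lemma \ref{r:inverselimit} applies uniformly along the homotopy and bounds $c_1^{vert}$ from below. A parametrized cobordism argument using $\ol{\cM}_{0,2}(\bar{P}_{\bar{c}}, A, \{J_{(t,\bar{b})}\})$ over the cylinder $[0,1]\times \bar{B}$ then identifies the two operators.

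For the second step, additivity under disjoint unions is immediate since the parametrized moduli spaces, evaluation maps, and fiber products all decompose over connected components of $\bar{B}$. Additivity in $\bar{\alpha}$ follows because $PD$ is $\bk$-linear and the fiber product $\ol{\cM}_{0,2}(\bar{P}_{f_k}, A, \{J_{\bar{b}}\}) \times_{\ev_0} Z_{(-)}$ depends linearly on the pseudocycle representative of $PD(\bar{\alpha})$. For the cobordism relation, suppose $(\bar{B},\bar{\alpha},\bar{f})$ extends to $(\bar{B}', \bar{\alpha}', \bar{f}')$ with $\partial \bar{B}' = \bar{B}$. The relative form of Lemma \ref{l:smoothApproximation} (which I would verify by a collaring argument plus a partition of unity in the equivariant smoothing) lets us produce a smooth approximation $\bar{f}'_k$ of $\bar{f}'$ that restricts to $\bar{f}_k$ on $\bar{B}$. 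Then the parametrized moduli space $\ol{\cM}_{0,2}(\bar{P}_{f'_k}, A, \{J_{\bar{b}}\}) \times_{\ev_0} Z_{PD(\bar{\alpha}')}$ is a manifold with boundary whose evaluation at $z_\infty$ exhibits $\cS_{\bar{f}_k,n}(PD(\bar{\alpha}), -)$ as the boundary of a pseudocycle, hence zero in homology.

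The subtlest relation is the Gysin/sphere-bundle relation. The key observation is that $\bar{P}_{\bar{f}\circ\pi} = \pi^*\bar{P}_{\bar{f}}$, so we may pull back the admissible family of symplectic structures and the $\pi$-compatible almost complex structures on $\bar{P}_{\bar{f}}$ to the sphere bundle. With these choices, the parametrized moduli space $\ol{\cM}_{0,2}(\pi^*\bar{P}_{\bar{f}}, A, \{J_{\bar{b}}\})$ fibers over $\ol{\cM}_{0,2}(\bar{P}_{\bar{f}}, A, \{J_{\bar{b}}\})$ with fiber the sphere bundle $S(E \oplus \ul{1})_{\bar{b}}$, and the evaluation maps are compatible with the projection $\pi$. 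The defining property of $\sigma_!$ — that its Poincaré dual is the pushforward by the zero section — lets us represent a pseudocycle for $PD(\sigma_!\bar{\alpha})$ on the sphere bundle by the zero section applied to a pseudocycle for $PD(\bar{\alpha})$ on $\bar{B}$. The ensuing fiber product identification yields equality of the two operators.

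I expect the Gysin relation to be the main obstacle, since it requires a careful matching of parametrized moduli spaces and evaluation maps under a bundle pullback, including the compatibility of the Floer data on the pulled-back Seidel bundle. The $k$-independence step is also delicate, as one must control both the compact set $K$ needed for the uniform bound from Lemma \ref{r:inverselimit} and the smoothness of the auxiliary symplectic and almost complex data across the homotopy; both issues are manageable by choosing the approximation sufficiently fine before fixing auxiliary structures.
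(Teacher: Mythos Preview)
Your proposal is correct and takes essentially the same approach as the paper: cobordism invariance over a smooth homotopy for the $k$-independence, and pulling back the auxiliary data so that the moduli space over the sphere bundle fibers over the base moduli space for the Gysin relation. One minor remark: your appeal to Lemma \ref{r:inverselimit} in the first step is unnecessary, since for a fixed compact $\bar{B}$ (and a fixed homotopy over $[0,1]\times\bar{B}$) finiteness of the contributing section classes already follows from the dimension bound in Lemma \ref{l:correlationmap}; the uniform bound of Lemma \ref{r:inverselimit} is only needed later, in Theorem \ref{t:invSeidel}, when passing to the inverse limit over $n$.
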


\begin{proof}
For sufficiently large $k_0,k_1$, 
$\bar{f}_{k_0}$ and $\bar{f}_{k_1}$ are homotopic so
there is a continuous map $\bar{c}:[0,1] \times \bar{B} \to (LG/T)_{borel,n}$
such that $\bar{c}(t,-)=\bar{f}_{k_0}$ near $t=0$ and $\bar{c}(t,-)=\bar{f}_{k_1}$ near $t=1$.
By Lemma \ref{l:smoothApproximation}, we can assume that $\bar{c}$ is smooth.
Both $[0,1] \times \bar{P}_{\bar{f}_{k_0}}$ and $\bar{P}_{\bar{c}}$ are naturally a fiber bundle over $[0,1]$.
By choosing a diffeomorphism $[0,1] \times \bar{P}_{\bar{f}_{k_0}} \simeq \bar{P}_{\bar{c}}$ covering the identity map of $[0,1]$, we can regard $\{\cC_{\bar{c}(t,-)}\}_{t \in [0,1]}$ as a family of maps defined using a family of auxiliary data on 
$\bar{P}_{\bar{f}_{k_0}}$.
By cobordism invariance (Lemma \ref{l:correlationmap}), we have $\cC_{\bar{f}_{k_0}}=\cC_{\bar{f}_{k_1}}$.
It clearly implies that \eqref{eq:Seidelkn} is independent of $k$ as long as $\bar{f}_{k}$ is homotopic to $\bar{f}$, which is true when $k$ is large.

To show that \eqref{eq:geosmSeidel2} is well-defined, we need to check that it is independent of representatives.
There are $4$ relations to check (see the paragraph before Theorem \ref{t:Jak}).
The first two are rather straightforward.
The third follows from a standard cobordism argument.
Therefore, we only spell out the last one.

The section $\sigma:\bar{B} \to S(E \oplus \ul{1})$ induces natural inclusion maps
$\sigma^{Y,0/\infty}: (\bar{P}_{f_k })_{0/\infty} \to (\bar{P}_{f_k \circ \pi})_{0/\infty}$.
To finish the proof, it suffices to establish the following commutative diagram (the notation $[q^{\pm 1}]$ is omitted in the diagram for convenience)
\begin{equation*}
\begin{tikzcd}
\hat{H}_*(\bar{B}) \otimes \hat{H}_*^{\hat{T}}(M) \arrow{r} \arrow[swap]{d}{\sigma_* \otimes 1}
& \hat{H}_*((\bar{P}_{f_k })_0) \arrow{r}{\cC_{\bar{f}_k}} \arrow[swap]{d}{\sigma^{Y,0}_*}
& \hat{H}_*((\bar{P}_{f_k})_{\infty}) \arrow{r} \arrow[swap]{d}{\sigma^{Y,\infty}_*}
& \hat{H}_*((S_{T,\infty})_{borel,n}) \arrow[swap]{d}{=}
\\
\hat{H}_*(S(E \oplus \ul{1})) \otimes \hat{H}_*^{\hat{T}}(M) \arrow{r}
& \hat{H}_*((\bar{P}_{f_k \circ \pi})_0) \arrow{r}{\cC_{\bar{f}_k \circ \pi}}
& \hat{H}_*((\bar{P}_{f_k \circ \pi})_{\infty}) \arrow{r}
& \hat{H}_*((S_{T,\infty})_{borel,n})
\end{tikzcd}
\end{equation*}
where the horizontal maps from the first column to the second column come from \eqref{eq:Se1} to \eqref{eq:Se4}.
The commutativity for the square on the left comes from functorality of the operations in \eqref{eq:Se1} to \eqref{eq:Se4}.
The commutativity for the square on the right comes from the fact that 
the bottom horizontal map factors through $\hat{H}_*((\bar{P}_{f_k})_{\infty})$
and $\sigma^{Y,\infty}$ is an embedding.
Finally, to verify the  commutativity for the square on the middle, we choose the auxiliary data on 
$\bar{P}_{f_k \circ \pi}$ to be the pull-back of an auxiliary data on $\bar{P}_{f_k}$.
In this case, the  auxiliary data on $\bar{P}_{f_k \circ \pi}$  is regular if and only if that on $\bar{P}_{f_k}$ is regular.
As a result, $\cM_{0,2}(\bar{P}_{f_k \circ \pi}, A, \{J_{\bar{b}}\})$ is a sphere-bundle over
$\cM_{0,2}(\bar{P}_{f_k}, A, \{J_{\bar{b}}\})$
and the commutativity of the middle square follows easily.

\end{proof}

\begin{thm}\label{t:invSeidel}
The following diagram commutes
\begin{equation}\label{eq:ComGeo}
\begin{tikzcd}
H^{geo,sm}_ {\operatorname{dim} B\hat{T}_{n+1}+ \ast}((L_{poly}G/T)_{\le d, borel,n+1}) \arrow{d} \arrow{r}
& \Hom(\hat{H}_*^{\hat{T}}(M)[q^{\pm 1}], \hat{H}_*(M_{borel,n+1})[q^{\pm 1}]) \arrow{d}
\\
H^{geo,sm}_ {\operatorname{dim} B\hat{T}_n+ \ast}((L_{poly}G/T)_{\le d, borel,n})  \arrow{r}
& \Hom(\hat{H}_*^{\hat{T}}(M)[q^{\pm 1}], \hat{H}_*(M_{borel,n})[q^{\pm 1}])
\end{tikzcd}
\end{equation}
where the horizontal maps are defined by \eqref{eq:geosmSeidel1}.

Moreover, taking the inverse limit gives a well-defined map
\begin{align}
 \varprojlim_n H^{geo,sm}_ {\operatorname{dim} B\hat{T}_n+ \ast}((L_{poly}G/T)_{\le d, borel,n}) \to \End(\hat{H}_*^{\hat{T}}(M)[q^{\pm 1}]) \label{eq:invSeidel}
\end{align}
\end{thm}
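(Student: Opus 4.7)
The plan is to verify commutativity of diagram \eqref{eq:ComGeo} on representative geometric cycles and then invoke the universal property of inverse limits. The key technical input will be a compatible choice of auxiliary data for the parametrized moduli spaces at levels $n$ and $n+1$.

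Starting from a class $[\bar{B},\bar{\alpha},\bar{f}]$, Lemma \ref{l:smoothApproximation} produces a smooth representative $\bar{f}_{n+1}\colon \bar{B} \to (LG/T)_{borel,n+1}$, and by Lemma \ref{l:SeGeoMap} we may further homotope it so that $\Pi \circ \bar{f}_{n+1}$, where $\Pi\colon (LG/T)_{borel,n+1} \to B\hat{T}_{n+1}$ is the projection, is transverse to $B\hat{T}_n$. Set $\bar{B}' := (\Pi \circ \bar{f}_{n+1})^{-1}(B\hat{T}_n)$, a codimension-$d$ smooth submanifold of $\bar{B}$ with $d := \dim B\hat{T}_{n+1}-\dim B\hat{T}_n$, and let $\bar{f}_n := \bar{f}_{n+1}|_{\bar{B}'}$. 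The standard description of the Gysin pullback of \eqref{eq: inverselimit} at the level of geometric homology (via excision and the Thom isomorphism, together with the discussion at the end of Section \ref{section: BMH} which passes from geometric to semi-infinite homology) realizes the left vertical arrow of \eqref{eq:ComGeo} as the correspondence $[\bar{B},\bar{\alpha},\bar{f}_{n+1}] \mapsto [\bar{B}',\bar{\alpha}|_{\bar{B}'},\bar{f}_n]$.

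Using the contractibility arguments from Section \ref{ss:admissible structure}, we select regular admissible data $(\{\Omega_{\bar{b}}\},\{J_{\bar{b}}\})$ on $\bar{P}_{f_{n+1}} \to \bar{B}$ whose restrictions to $\bar{P}_{f_n} \to \bar{B}'$ are also regular. Since $\bar{P}_{f_n}$ is the restriction of $\bar{P}_{f_{n+1}}$, and similarly for the divisors $(\bar{P}_{f_n})_{0,\infty}$, the moduli space $\ol{\cM}_{0,2}(\bar{P}_{f_n},A,\{J_{\bar{b}|_{\bar{B}'}}\})$ is the transverse preimage of $\bar{B}'$ in $\ol{\cM}_{0,2}(\bar{P}_{f_{n+1}},A,\{J_{\bar{b}}\})$ under the forgetful map to $\bar{B}$, with matching normal bundles. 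The commutativity of \eqref{eq:ComGeo} then follows from the base-change compatibility of Gysin pullback with each step of the pushforward/restriction sequence \eqref{eq:Se1}--\eqref{eq:Se7}; the key geometric point is that the normal bundle of $M_{borel,n} \subset M_{borel,n+1}$ is pulled back from that of $B\hat{T}_n \subset B\hat{T}_{n+1}$, so the two Thom classes used to define the vertical Gysin maps of \eqref{eq:ComGeo} are compatible through the entire evaluation-and-projection sequence.

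Once the diagram commutes, the inverse limit map \eqref{eq:invSeidel} is produced by the universal property of the inverse limit, with well-definedness at each level supplied by Lemma \ref{l:SeGeoMap}. The main technical obstacle I anticipate is executing the base-change argument carefully at the level of the pseudocycles defined by $\ol{\cM}_{0,2}$, rather than at the level of smooth cycles; this is possible because the inclusions $B\hat{T}_n \subset B\hat{T}_{n+1}$ are normally nonsingular (Section \ref{section: BMH}) and pseudocycle pushforwards along proper maps of vector bundles commute with Thom-class multiplication. A secondary subtlety is arranging the transversality of $\Pi \circ \bar{f}_{n+1}$ simultaneously with the choice of regular auxiliary data, but this is straightforward since both spaces of relevant choices are nonempty and connected.
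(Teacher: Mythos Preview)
Your argument for the commutativity of diagram \eqref{eq:ComGeo} is essentially the paper's: restrict to the transverse preimage $\bar{B}'$ of $B\hat{T}_n$, choose auxiliary data on $\bar{P}_{f_{n+1}}$ whose restriction to $\bar{P}_{f_n}$ is still regular, and identify the level-$n$ moduli space as the fiber product of the level-$(n+1)$ moduli space with $B\hat{T}_n \to B\hat{T}_{n+1}$. (One minor correction: the transversality perturbation of $\Pi\circ\bar{f}_{n+1}$ comes from Lemma \ref{l:smoothing2}, not Lemma \ref{l:SeGeoMap}.)

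However, there is a genuine gap in the second half. You write that ``the inverse limit map \eqref{eq:invSeidel} is produced by the universal property of the inverse limit, with well-definedness at each level supplied by Lemma \ref{l:SeGeoMap}.'' This only gives a map into
\[
\varprojlim_n \Hom\bigl(\hat{H}_*^{\hat{T}}(M)[q^{\pm 1}],\ \hat{H}_*(M_{borel,n})[q^{\pm 1}]\bigr),
\]
and the issue is that inverse limit does not commute with $(-)[q^{\pm 1}]$. Concretely, as $n\to\infty$ the dimension of $\bar{B}_n$ grows without bound, so the range of $c_1^{vert}(A)$ allowed by the dimension inequality \eqref{eq:possibleContribution} also grows. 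A priori, then, the sequence $\cS_{\bar{f}_n,n}(PD(\bar{\alpha}_n),x)$ could involve arbitrarily negative powers of $q$ and the limit would only live in a completion of $\hat{H}_*^{\hat{T}}(M)[q^{\pm 1}]$, not in $\hat{H}_*^{\hat{T}}(M)[q^{\pm 1}]$ itself. The paper closes this gap precisely by invoking Lemma \ref{r:inverselimit}: since all the $\bar{f}_n$ factor through $(L_{poly}G/T)_{\le d}$, which is compact, there is a uniform lower bound on $c_1^{vert}(A)$ (independent of $n$) below which the moduli spaces are null-cobordant. You need to cite this energy/monotonicity bound explicitly; without it the target of \eqref{eq:invSeidel} is wrong.
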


We shall make use of the following lemma.
\begin{lem}\label{l:smoothing2}
Let $B_0,B_1$ be finite dimensional smooth compact manifolds and $\Pi:E \to B_1$ be a topological fiber bundle ($\Pi$ is not assumed to be smooth and $E$ is not assumed to be finite dimensional).
Let $f:B_0 \to E$ be a continuous map.
Then there exist a sequence of continuous maps $(f_k:B_0 \to E)_k$ such that $f_k$ converges to $f$ in $C^0$ and $\Pi \circ f_k$ is smooth for all $k$.

Moreover, if $B_2$ is a smooth closed submanifold of $B_1$, we can further assume that  $\Pi \circ f_k$ is transversal to $B_2$ for all $k$.
\end{lem}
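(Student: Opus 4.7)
The plan is to first smooth the composition $g := \Pi \circ f : B_0 \to B_1$ on the smooth manifold $B_1$ by standard mollification, producing smooth $g_k : B_0 \to B_1$ with $g_k \to g$ in $C^0$, and then to lift $g_k$ back to a continuous map $f_k : B_0 \to E$ with $\Pi \circ f_k = g_k$ and $f_k \to f$ in $C^0$. The latter step is the substantive one, since $E$ and $\Pi$ are not assumed smooth, so one cannot directly smooth $f$ itself.

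For the lift, I would exploit compactness of $B_0$ to choose a finite cover $\{V_i\}_{i=1}^N$ of $B_0$ together with trivializing open sets $U_i \subset B_1$ of $\Pi$ satisfying $g(\overline{V_i}) \subset U_i$, with trivializations $\tau_i : \Pi^{-1}(U_i) \xrightarrow{\sim} U_i \times F$. After refinement, each $U_i$ may be taken to be a geodesically convex neighborhood in some fixed Riemannian metric on $B_1$. Pick a smooth partition of unity $\{\rho_i\}$ subordinate to $\{V_i\}$ and set $s_i := \rho_1 + \cdots + \rho_i$. For $k$ large, $g_k$ is sufficiently close to $g$ that the point $h_i(x)$ on the geodesic from $g(x)$ to $g_k(x)$ at parameter $s_i(x)$ is well-defined, stays near $g(x)$ and hence inside every $U_j \ni g(x)$, and satisfies $h_0 = g$, $h_N = g_k$.

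Now construct $f_k$ in $N$ steps. Put $f_k^{(0)} := f$. Given $f_k^{(i-1)}$ with $\Pi \circ f_k^{(i-1)} = h_{i-1}$, note that for $x \in V_i$ one has $h_{i-1}(x) \in U_i$, so one may write $f_k^{(i-1)}(x) = \tau_i^{-1}(h_{i-1}(x), \phi_i(x))$; define $f_k^{(i)}(x) := \tau_i^{-1}(h_i(x), \phi_i(x))$ on $V_i$ and $f_k^{(i)}(x) := f_k^{(i-1)}(x)$ otherwise. Continuity along $\partial V_i$ is automatic because $\rho_i$ vanishes there, forcing $h_i = h_{i-1}$. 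Setting $f_k := f_k^{(N)}$ yields $\Pi \circ f_k = g_k$ smooth, and the $C^0$-convergence $f_k \to f$ follows from the uniform smallness of $h_i - g$ together with uniform continuity of the finitely many $\tau_i^{-1}$ on compact sets. For the transversality refinement, once $g_k$ is smooth one applies Thom transversality in the smooth category to replace $g_k$ by a $C^0$-nearby smooth map transverse to $B_2$, and then re-runs the above lifting.

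The main obstacle I anticipate is that overlapping trivializations $\tau_i, \tau_j$ are related by transition functions valued in $\mathrm{Homeo}(F)$, which carries no linear or smooth structure one could exploit to glue locally-defined lifts. The inductive construction above circumvents this by modifying $f$ inside a single trivialization at a time, so no simultaneous gluing on overlaps is required. The only compatibility that must then be verified, namely that each intermediate projection $h_{i-1}(x)$ lies in $U_i$ whenever a modification is performed at $x$, is ensured by the convexity of the $U_i$ combined with the uniform closeness of $g_k$ to $g$.
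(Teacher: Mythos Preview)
Your argument is correct, but it takes a different and more explicit route than the paper's.  The paper avoids the step-by-step lifting entirely: it first chooses a continuous homotopy $c:[0,1]\times B_0\to B_1$ with $c(0,-)=\Pi\circ f$ and $c(t,-)$ smooth for $t\neq 0$, pulls back $E$ along $c$, and then observes that $f$ is a section of $c^*E$ over $\{0\}\times B_0$.  Because $[0,1]\times B_0$ deformation retracts onto $\{0\}\times B_0$, this section extends to a global section $\sigma$ of $c^*E$; restricting to $\{t\}\times B_0$ gives the desired $f_t$ with $\Pi\circ f_t=c(t,-)$, and the $C^0$-convergence $f_t\to f$ is immediate from continuity of $\sigma$ and compactness of $B_0$.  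Transversality is handled by arranging that $c(t_k,-)$ is transverse to $B_2$ along a sequence $t_k\to 0$.

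Your explicit partition-of-unity construction has the virtue of being self-contained and not invoking the section extension property as a black box, at the cost of more bookkeeping.  One point where your write-up is a bit terse is the $C^0$-convergence: the fiber components $\phi_i$ that appear at step $i$ are extracted from $f_k^{(i-1)}$, not from $f$, and hence depend on $k$ through the transition functions applied at earlier stages.  To make the uniform-continuity argument go through you should note, by induction on $i$, that the $\phi_i$ stay in a fixed compact subset of $F$ for all large $k$ and converge uniformly to the fiber component of $f$ in the $i$-th trivialization; this follows from compactness of $B_0$ and continuity of the transition maps on the relevant compact sets.  With that remark added, your proof is complete.
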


\begin{proof}
By the smooth approximation theorem (e.g. by taking covolution), there exist a continuous map
$c:[0,1] \times B_0 \to B_1$
such that $c(0,-)=\Pi \circ f$ and $c(t,-)$ is smooth for all $t \neq 0$.
The continuity of $c$ and compactness of $B_0$ guarantees that $c(t,-)$ converges to $c(0,-)$ in $C^0$ as $t$ goes to $0$.
Therefore, it suffices to show that for a sequence $(t_k)_k$ converging to $0$, we can find $(f_k:B_0 \to E)_k$
such that $c(t_k,-)=\Pi \circ f_k$ for all $k$ and $f_k$ converges to $f$ in $C^0$.

Let $\Pi_c: c^*E \to [0,1] \times B_0$ be the pull-back fiber bundle.
Let  $(c^*E)_0:= \Pi_c^{-1}(\{0\} \times B_0)$ and $\Pi_{c,0}:=\Pi_c|_{(c^*E)_0}$.
The map $f:B_0 \to E$ can be canonically identified as a section of $\Pi_{c,0}$.
Since $[0,1] \times B_0$ is homotopic to $B_0$, we can extend this section to a section, denoted by $\sigma$, of $\Pi_{c}$.
For any $t \in [0,1]$, $\sigma|_{\{t\} \times B_0}$ can be canonically identitified with a continuous map
$f_t:B_0 \to E$ such that $\Pi \circ f_t=c(t,-)$.
By continuity of $\sigma$ and compactness of $B_0$, $f_t$ converges to $f$ in $C^0$ as $t$ goes to $0$.
This proves the first part the lemma.

For the second part, we can first pick
$c:[0,1] \times B_0 \to B_1$ such that $c(0,-)=\Pi \circ f$
and there is a sequence $t_k$ going to $0$ with the property that $c(t_k,-)$ is transversal to $B_2$ for all $k$.
Then we run the rest of the arguments.
\end{proof}

\begin{proof}[Proof of Theorem \ref{t:invSeidel}]
Let $(\bar{B}_{n+1},\bar{\alpha}_{n+1},\bar{f}_{n+1})$ be a representative of a class in 
$H^{geo,sm}_ {\operatorname{dim} B\hat{T}_{n+1}+ \ast}((L_{poly}G/T)_{\le d, borel,n+1})$.
Denote the projection $(L_{poly}G/T)_{\le d, borel,n+1}\to B\hat{T}_{n+1}$ by $\Pi$.
By possibly perturbing $\bar{f}_{n+1}$, we can assume that $\Pi \circ \bar{f}_{n+1}$ is smooth and transversal to $B\hat{T}_{n}$ (see Lemma \ref{l:smoothing2})
so that the preimage of $B\hat{T}_{n}$ is a smooth submanifold, denoted by $\bar{B}_n$, of $\bar{B}_{n+1}$.
Then $(\bar{B}_{n},\bar{\alpha}_{n}:=\bar{\alpha}_{n+1}|_{\bar{B}_{n}},\bar{f}_{n}:=\bar{f}_{n+1}|_{\bar{B}_{n}})$ is a representative of the restriction of the class $[\bar{B}_{n+1},\bar{\alpha}_{n+1},\bar{f}_{n+1}]$
to $H^{geo,sm}_ {\operatorname{dim} B\hat{T}_{n}+ \ast}((L_{poly}G/T)_{\le d, borel,n})$.

By Lemma \ref{l:smoothApproximation}, we have a sequence of smooth maps $\bar{f}_{n+1,k}:\bar{B}_{n+1} \to (LG/T)_{borel,n+1}$ converges in $C^0$ to  $\bar{f}_{n+1}$.
The smoothness of $\Pi \circ \bar{f}_{n+1}$ enable us to find $\bar{f}_{n+1,k}$ such that $\Pi \circ \bar{f}_{n+1,k}=\Pi \circ \bar{f}_{n+1}$ for all $k$.
In particular, $\Pi \circ \bar{f}_{n+1,k}$ is transversal to $B\hat{T}_{n}$
and $\bar{B}_{n,k}:=(\Pi \circ \bar{f}_{n+1,k})^{-1}(B\hat{T}_{n})$ is the same as $\bar{B}_{n}$.
Let $\bar{\alpha}_{n,k}:=\bar{\alpha}_{n+1}|_{\bar{B}_{n,k}}$ and $\bar{f}_{n,k}:=\bar{f}_{n+1}|_{\bar{B}_{n,k}}$.
The image of $[\bar{B}_{n},\bar{\alpha}_{n},\bar{f}_{n}]$ under the map \eqref{eq:geosmSeidel1}
is $\lim_k \cS_{\bar{f}_{n,k},n}(PD(\bar{\alpha}_{n,k}),-)$.
To show that $\lim_k \cS_{\bar{f}_{n+1,k},n+1}(PD(\bar{\alpha}_{n+1}),-)$ maps to $\lim_k \cS_{\bar{f}_{n,k},n}(PD(\bar{\alpha}_{n,k}),-)$ under the vertical map on the right of \eqref{eq:ComGeo}, it suffices to establish the following commutative diagram  (the notation $[q^{\pm 1}]$ is omitted in the diagram for convenience).
\begin{equation*}
\begin{tikzcd}
\hat{H}_*(\bar{B}_{n+1}) \otimes \hat{H}_*^{\hat{T}}(M) \arrow{r} \arrow{d}
& \hat{H}_*((\bar{P}_{f_{n+1,k} })_0) \arrow{r}{\cC_{\bar{f}_{n+1,k}}} \arrow{d}
& \hat{H}_*((\bar{P}_{f_{n+1,k}})_{\infty}) \arrow{r} \arrow{d}
& \hat{H}_*((S_{T,\infty})_{borel,n+1}) \arrow{d}
\\
\hat{H}_*(\bar{B}_{n,k}) \otimes \hat{H}_*^{\hat{T}}(M) \arrow{r}
& \hat{H}_*((\bar{P}_{f_{n,k}})_0) \arrow{r}{\cC_{\bar{f}_{n,k}}}
& \hat{H}_*((\bar{P}_{f_{n,k}} )_{\infty}) \arrow{r}
& \hat{H}_*((S_{T,\infty})_{borel,n})
\end{tikzcd}
\end{equation*}
where all the vertical maps are given by Poincare dual of the pull-back on cohomology.
The commutivity of the square on the left on the right follows from the functorality.
To establish the commutivity of the square in the middle, we choose the auxiliary data on 
$\bar{P}_{f_{n,k}}$ to be the restriction of the auxiliary data on $\bar{P}_{f_{n+1,k}}$.
We can ensure that both are regular simultaneously.
In this case, the moduli space 
$\cM_{0,2}(\bar{P}_{f_{n,k}}, A, \{J_{\bar{b}}\})$ is the fiber product between
$\cM_{0,2}(\bar{P}_{f_{n+1,k}}, A, \{J_{\bar{b}}\}) \to B\hat{T}_{n+1}$
and $B\hat{T}_{n} \to B\hat{T}_{n+1}$
so the commutivity of the middle square follows.

Finally, to conclude that we have a well-defined map \eqref{eq:invSeidel}, we need to show that 
for any $\{(\bar{B}_{n},\bar{\alpha}_{n},\bar{f}_{n})\}_{n \in \mathbb{N}}$ representing a class in the LHS of \eqref{eq:invSeidel} and for any pure degree class $x \in \hat{H}_*^{\hat{T}}(M)$, the limit
$\lim_n \lim_k \cS_{\bar{f}_{n,k},n}(PD(\bar{\alpha}_{n}),x)$ exists in $\hat{H}_*^{\hat{T}}(M)[q^{\pm 1}]$.
In other words, we need to bound the power of $q$ in $\lim_k \cS_{\bar{f}_{n,k},n}(PD(\bar{\alpha}_{n}),x)$ as $n$ goes to $\infty$.
Thanks to Lemma \ref{r:inverselimit}, the limit exists.
\end{proof}

It is clear that the map \eqref{eq:invSeidel} is compatible with taking direct limit in $d$.
Therefore, we obtain our equivariant Seidel map
\begin{align*}
\cS: \hat{H}^{\hat{T}}_*(L_{poly}G/T) \simeq \varinjlim_d \varprojlim_n H^{geo,sm}_ {\operatorname{dim} B\hat{T}_n+ \ast}((L_{poly}G/T)_{\le d, borel,n}) \to \End(\hat{H}_*^{\hat{T}}(M)[q^{\pm 1}])
\end{align*}

Let $E^0\hat{T}$ and $E^1\hat{T}$ be two different models for the classifying space of $\hat{T}$.
Then $E^2\hat{T}:=E^0\hat{T} \times E^1\hat{T}$ is also a model for the classifying space of $\hat{T}$.
Let $E^2\hat{T}_n:=E^0\hat{T}_n \times E^1\hat{T}_n$.
For any finite $\hat{T}$ CW complex $N$, let $N_{E^i,n}:=(N \times E^i\hat{T}_n)/\hat{T}$.
The natural projections
$N_{E^2,n}  \to N_{E^i,n}$
for $i=0,1$ and $n \in \mathbb{N}$ induce isomorphisms
\begin{align*}
\phi_{E^iE^2}:\varprojlim_n H^*(N_{E^i,n}) &\simeq \varprojlim_n H^*(N_{E^2,n}) 
\end{align*}
Let $\tilde{\phi}_{E^0E^1}:=\phi_{E^1E^2}^{-1} \circ \phi_{E^0E^2}$.
These isomorphisms are canonical in the sense that $\tilde{\phi}_{EE}=id$ and $\tilde{\phi}_{E^1E^2} \circ \tilde{\phi}_{E^0E^1}=\tilde{\phi}_{E^0E^2}$ for any models $E^0,E^1,E^2$.
The next result verifies that $\cS$ is independent of the model of the classifying space.

\begin{prop}[independence of classifying model]\label{p:indepModel}
The equivariant Seidel map $\cS$ intertwines the canonical isomorphisms between different models of the classifying space $B\hat{T}_\rho$ from \eqref{eq:classifying}.
\end{prop}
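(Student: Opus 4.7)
The plan is to factor the comparison through the product model. Since $\tilde{\phi}_{E^0E^1} = \phi_{E^1E^2}^{-1}\circ \phi_{E^0E^2}$ by construction, it suffices to show that for $i=0,1$ the Seidel map $\cS^{E^i}$ built from the $E^i$-model agrees with $\cS^{E^2}$ after applying $\phi_{E^iE^2}$. I will treat the case $i=0$; the case $i=1$ is identical.

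Working at a finite level $n$ and truncation $\le d$, let $p_0\colon (LG/T)^{E^2}_{borel,n}\to (LG/T)^{E^0}_{borel,n}$ denote the projection, which is a smooth fiber bundle with closed smooth fiber $E^1\hat T_n$. Given a representative $(\bar B,\bar\alpha,\bar f)$ of a class in $H^{geo,sm}_{\dim B\hat T_n + *}((L_{poly}G/T)^{E^0}_{\le d,borel,n})$, its image under $\phi_{E^0E^2}$ corresponds (via Poincar\'e duality) to $p_0^*\bar\alpha$, and by an argument parallel to the end of the proof of Theorem \ref{t:invSeidel} I would represent it geometrically by the pullback triple
$$
(\bar B^{E^2},\, q^*\bar\alpha,\, \bar f^{E^2}),\qquad \bar B^{E^2} := \bar B\times_{(LG/T)^{E^0}_{borel,n}} (LG/T)^{E^2}_{borel,n},
$$
where $q\colon \bar B^{E^2}\to \bar B$ and $\bar f^{E^2}\colon \bar B^{E^2}\to (LG/T)^{E^2}_{borel,n}$ are the canonical projections. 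Since $q$ is a fiber bundle with closed oriented smooth fiber, $\bar B^{E^2}$ is again closed oriented and smooth, and Lemma \ref{l:smoothApproximation} may be applied compatibly so that smooth approximations of $\bar f^{E^2}$ are pulled back from smooth approximations of $\bar f$.

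The key step is then to observe that the Seidel space bundle $\bar P_{\bar f^{E^2}}$ is canonically identified with the pullback $q^*\bar P_{\bar f}$, and to choose the auxiliary Floer data (admissible family of symplectic forms and $\pi$-compatible family of almost complex structures) on $\bar P_{\bar f^{E^2}}$ as the $q$-pullback of generic data chosen downstairs. Because $q$ is a submersion with compact fiber, regularity is preserved, and the parametrized moduli space becomes a fiber product
$$
\ol{\cM}_{0,2}(\bar P_{\bar f^{E^2}},A,\{J\}) = \ol{\cM}_{0,2}(\bar P_{\bar f},A,\{J\})\times_{\bar B}\bar B^{E^2},
$$
with evaluation maps pulled back along $q$. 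A diagram chase of the form used to establish the middle square in the proof of Theorem \ref{t:invSeidel} then gives $\cS_{\bar f^{E^2},n}(PD(q^*\bar\alpha),-) = p_0^*\circ \cS_{\bar f,n}(PD(\bar\alpha),-)$ as maps into $\hat H_*(M^{E^2}_{borel,n})[q^{\pm 1}]$, where $p_0^*$ denotes the identification induced by the corresponding projection on the target Borel construction.

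Taking the inverse limit over $n$ and the direct limit over $d$, and invoking Lemma \ref{r:inverselimit} to ensure convergence in $\hat H_*^{\hat T}(M)[q^{\pm 1}]$ uniformly in the model, yields the desired intertwining of $\cS^{E^0}$ with $\cS^{E^2}$ under $\phi_{E^0E^2}$. I expect the main technical obstacle to be verifying that the pulled-back auxiliary data genuinely remain regular and that the smooth approximations on $\bar B$ and $\bar B^{E^2}$ can be chosen to commute with $q$; both points reduce to the fact that $q$ is a smooth submersion with closed fiber, so that transversality of moduli problems and of cycles with respect to Borel strata descends from the $E^2$-model to the $E^0$-model.
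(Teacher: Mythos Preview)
Your proposal is correct and follows essentially the same route as the paper's own sketch: factor through the product model $E^2=E^0\times E^1$, pull the geometric cycle back along the projection $(LG/T)_{E^2,n}\to (LG/T)_{E^0,n}$, observe that the Seidel space bundle and its auxiliary data pull back so the parametrized moduli space is a fiber product over $\bar B$, and then pass to limits. The paper records the commutativity in cohomological degree conventions and simply notes that the pulled-back $\bar f^2$ is again smooth, whereas you phrase smoothness via compatible approximations, but the substance is the same.
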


\begin{proof}[Sketch of proof]
Let $\cS^i$ be the equivariant Seidel map defined using the model $E^i\hat{T}$ for $i=0,1,2$ as above.
We want to show that $\cS^0=\cS^2(=\cS^1)$ under the identifications by the canonical isomorphisms.

It suffices to check that the following diagram commutes for any $\bar{f}^0$ and $n$
\begin{equation}\label{eq:fiberCom}
\begin{tikzcd}
 H^{\dim(\bar{B}^0)-*}(\bar{B}^0) \otimes \varprojlim_n H^{\dim(M)-*}(M_{E^0,n})[q^{\pm 1}] \arrow{r}{\cS_{\bar{f}^0,n}} \arrow{d}
&  H^{\dim(\bar{B}^0)+\dim(M)-*}(M_{E^0,n})[q^{\pm 1}] \arrow{d}
\\
 H^{\dim(\bar{B}^0)-*}(\bar{B}^2) \otimes \varprojlim_n H^{\dim(M)-*}(M_{E^2,n})[q^{\pm 1}] \arrow{r}{\cS_{\bar{f}^2,n}}
&  H^{\dim(\bar{B}^0)+\dim(M)-*}(M_{E^2,n})[q^{\pm 1}]
\end{tikzcd}
\end{equation}
In the diagram, $\bar{f}^0:\bar{B}^0 \to (LG/T)_{E^0,n}$ is smooth, $\bar{B}^2 \to \bar{B}^0$ is the pull-back of
$(LG/T)_{E^2,n} \to (LG/T)_{E^0,n}$ along  $\bar{f}^0$ and $\bar{f}^2:\bar{B}^2 \to (LG/T)_{E^2,n}$ is the associated map covering $\bar{f}^0$.
It is immediate to check that $\bar{f}^2$ is also smooth so $\cS_{\bar{f}^0,n}$ and $\cS_{\bar{f}^2,n}$ are well-defined.
The vertical arrows in the diagram are induced by pull-back of the natural maps.
The horizontal arrow on top is the cohomology version of $\cS_{\bar{f}^0,n}$ (cf. \eqref{eq:Se1} to \eqref{eq:Se7})
and the horizontal arrow on the bottom is the cohomology version of $\cS_{\bar{f}^2,n}$ shifted by degree 
$\dim(\bar{B}^0)-\dim(\bar{B}^2)$.
Similar to the proof of Lemma \ref{l:SeGeoMap} and Theorem \ref{t:invSeidel}, the commutativity of the diagram \eqref{eq:fiberCom} boils down to the commutativity of
\begin{equation*}
\begin{tikzcd}
& \hat{H}^*((\bar{P}_{f^0 })_0)[q^{\pm 1}] \arrow{r}{\cC_{\bar{f}^0}} \arrow{d}
& \hat{H}^*((\bar{P}_{f^0})_{\infty})[q^{\pm 1}]  \arrow{d}
\\
& \hat{H}^*((\bar{P}_{f^2})_0)[q^{\pm 1}] \arrow{r}{\cC_{\bar{f}^2}}
& \hat{H}^*((\bar{P}_{f^2} )_{\infty})[q^{\pm 1}] 
\end{tikzcd}
\end{equation*}
which in turn folllows from the fact that $\bar{f}^2$ is a bundle map covering $\bar{f}^0$ and we can choose auxiliary data on $\bar{P}_{f^2}$ that are pull-back from the auxiliary data on $\bar{P}_{f^0}$.


The result now follows from Lemma \ref{l:SeGeoMap}, Theorem \ref{t:invSeidel}
and passing to the inverse limit in $n$.

\end{proof}

We end this section with the following basic properties.

\begin{lem}[Identity] \label{ex:identity} Let $\bar{B}_n=B\hat{T}_n$ and $\bar{f}: \bar{B}_n \to (LG/T)_{borel,n}$ be the inclusion $\lbrace [id] \rbrace \times B\hat{T}_n \subset (LG/T)_{borel,n}$.  Then $\cC_{\bar{f}}: \hat{H}_*(M_{borel,n})[q^{\pm 1}] \to \hat{H}_*(M_{borel,n})[q^{\pm 1}]$ is the identity.   
As a result, $\cS([id], -):\hat{H}_*^{\hat{T}}(M)[q^{\pm 1}] \to \hat{H}_*^{\hat{T}}(M)[q^{\pm 1}]$ is the identity map.
\end{lem}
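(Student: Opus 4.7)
The approach exploits the triviality of the Seidel bundle over the identity fixed point. First, I would identify $\bar{P}_f$ with the Borel quotient $X_{id} \times_{\hat{T}} E\hat{T}_n$, where $X_{id} = M \times S^2$ is the Seidel space of the constant loop at the identity. A direct check of the residual $\hat{T}$-action on $X_{id}/T$ shows that $S^1$ rotates $S^2$ (fixing $0$ and $\infty$) while $T$ acts on $M$ by its standard action at both divisors $\{0\}$ and $\{\infty\}$; hence $(\bar{P}_f)_0$ and $(\bar{P}_f)_\infty$ are canonically identified with $M_{borel,n}$, and the source and target of $\cC_{\bar{f}}$ agree.

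Choose an $\hat{T}$-equivariant admissible family of symplectic forms $\{\Omega_{\bar{b}}\}$ and a $\pi$-compatible family $\{J_{\bar{b}}\}$, obtained by averaging as in Example \ref{e:admissSym} and pulled back from $X_{id}$. For the trivial section class $A_0$, any $\pi$-compatible $J$-holomorphic section of $X_{id} \to S^2$ is forced to be constant in the $M$-direction, so $\cM_{0,2}(X_{id}, A_0, J) = M$ with both evaluations equal to the identity. In the Borel construction this yields the identity operator on $\hat{H}_*(M_{borel,n})[q^{\pm 1}]$ as the $A_0$-contribution to $\cC_{\bar{f}}$.

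The main work is to show that higher section classes $A_0 + B$, $B \ne 0$ (hence $c_1(B) > 0$ by monotonicity), contribute zero. The $S^1$-loop-rotation acts on $\cM_{0,2}(X_{id}, A_0+B, J)$ by precomposing sections with rotations of $S^2$, and this action is generically free on non-constant sections. Since $0, \infty \in S^2$ are $S^1$-fixed and $S^1$ acts trivially on $M$ at both divisors, the evaluations $\ev_0, \ev_\infty$ are $S^1$-invariant. Consequently the fiber product $\cM_{0,2}(\bar{P}_f, A_0 + B) \times_{\ev_0} Z_x$ inherits a generically free $S^1$-action compatible with $\ev_\infty$, and the evaluation factors through the $S^1$-quotient $\pi$, which lowers dimension by one. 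The pushforward $\pi_*[\cM \times_{\ev_0} Z_x]$ therefore lies in the top homology of a space of strictly smaller dimension and vanishes; consequently $[\ev_\infty : \ol{\cM}_{0,2}(\bar{P}_f, A_0+B) \times_{\ev_0} Z_x \to M_{borel,n}] = 0$ for every $Z_x$ and every $c_1(B) > 0$.

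The main obstacle I anticipate is regularity: an $\hat{T}$-equivariant $J$ need not be regular for the parametrized moduli. I plan to address this by the cobordism invariance of Lemma \ref{l:correlationmap}: first compute $\cC_{\bar{f}}$ with the equivariant data via the $S^1$-freeness argument above (interpreting the moduli as a pseudocycle cobordism class), then perturb to a generic $J$ without changing the count; alternatively, one can invoke equivariant transversality on the $S^1$-free non-constant locus. Granting the identity for $\cC_{\bar{f}}$, the second assertion follows by chasing the composition \eqref{eq:Se1}--\eqref{eq:Se7}: for $\bar{\alpha} = [\bar{B}_n]$ (so $PD(\bar{\alpha}) = 1$) with $\bar{f}$ the identity inclusion, the maps \eqref{eq:Se1}--\eqref{eq:Se4} and \eqref{eq:Se6}--\eqref{eq:Se7} are the natural identifications, so with $\cC_{\bar{f}}$ equal to the identity the entire composition is the identity on $\hat{H}_*^{\hat{T}}(M)[q^{\pm 1}]$.
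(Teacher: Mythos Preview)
Your argument is correct, but it takes a different route from the paper's proof.  The paper also identifies $\bar{P}_f \simeq M_{borel,n}\times\mathbb{P}^1$ and then, working in a single fiber, observes that the two--pointed section count in $M\times\mathbb{P}^1$ is literally the operator of quantum multiplication by the fundamental class $[M]$, which is the unit; choosing auxiliary data pulled back from one fiber finishes the computation.  In other words, the paper packages the vanishing of all $B\neq 0$ contributions into the fundamental class axiom for Gromov--Witten invariants (the two--pointed section invariant equals the three--pointed invariant in $M$ with one insertion $[M]$), rather than arguing it by hand.

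Your $S^1$--rotation argument is essentially a direct proof of that axiom in this special case, and is more self--contained.  Two small points worth tightening: first, the $S^1$--action on $\cM_{0,2}(X_{id},A_0+B,J)$ is only \emph{locally} free (multiple covers have finite cyclic stabilizers), not free; this is harmless, since the quotient still has strictly smaller dimension and your pushforward argument goes through verbatim.  Second, your regularity worry is best resolved by simply taking the product $J=J_M\times j_{S^2}$ with $J_M$ generic: this is $S^1$--invariant, regular for the class $A_0$ (the linearization at a constant section is surjective), and in the monotone setting the standard pseudocycle machinery handles the $B\neq 0$ strata.  The paper's route avoids this discussion entirely by appealing to the unit property of $[M]$ in quantum cohomology, which is perhaps why it is presented so briefly.
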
 
\begin{proof} 
We have that $\bar{P}_f \simeq M_{borel, n} \times \mathbb{P}^1$.  Working in a fiber of $M_{borel, n} \times \mathbb{P}^1$ over some point $b \in \bar{B}_n$,  the  count of two-pointed spheres is then equivalent to by quantum multiplication by the fundamental class $[M]$ (which is identity because $[M]$ is the unit).  We can then choose auxiliary data to be independent of $\bar{B}_n$ so
$\cC_{\bar{f}}=\operatorname{id}.$   

The identity in $\hat{H}^{\hat{T}}_*(L_{poly}G/T) $ is represented by $\lbrace [id] \rbrace \times B\hat{T} \subset (LG/T)_{borel}$ so it follows easily that $\cS([id], -)$ is the identity map.
\end{proof}

We have an algebra embedding $\hat{H}^{\hat{T}}_*(pt) \to \hat{H}^{\hat{T}}_*(L_{poly}G/T)$ induced by mapping the point to $[id] \in L_{poly}G/T$.
It induces both a left and a right $\hat{H}^{\hat{T}}_*(pt) $-action on $\hat{H}^{\hat{T}}_*(L_{poly}G/T)$.
On the other hand, we have a left $\hat{H}^{\hat{T}}_*(pt)$-action on $\hat{H}^{\hat{T}}_*(M) \simeq H^{\dim(M)-*}_{\hat{T}}(M)$ via the identification $\hat{H}^{\hat{T}}_*(pt)=H^{-*}(B\hat{T})$.

\begin{lem}[$\hat{H}^{\hat{T}}_*(pt)$-linearity]\label{l:linearity}
For any $\beta \in \hat{H}^{\hat{T}}_*(pt)$ and $\alpha \in \hat{H}^{\hat{T}}_*(L_{poly}G/T)$, we have $\cS(\beta \cdot \alpha,-)=\beta \cdot \cS(\alpha,-)$.
\end{lem}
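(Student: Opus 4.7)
The plan is to unpack both sides of $\cS(\beta \cdot \alpha,-) = \beta \cdot \cS(\alpha,-)$ using the geometric-cycle model of Section 3.5 and to invoke naturality of the parametrized moduli construction with respect to the projection to $B\hat{T}$.

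First, I would give a geometric interpretation of left-multiplication by $\beta$. Under the embedding $\hat{H}^{\hat{T}}_*(pt) \hookrightarrow \hat{H}^{\hat{T}}_*(L_{poly}G/T)$, the class $\beta$ is sent to $\beta \cdot e_{id}$ in the notation of \eqref{eq:multipbasis}. Since $\mathcal{A}_{id}$ acts trivially on $R$, the formula \eqref{eq:multipbasis} gives $(\beta e_{id}) \ast_{fp} (f e_{\tilde{w}}) = \beta f e_{\tilde{w}}$, so left multiplication by $\beta$ coincides with multiplication by $\beta$ in the $R$-module structure on $\hat{H}^{\hat{T}}_*(L_{poly}G/T)$ pulled back via the projection $\Pi: (L_{poly}G/T)_{borel} \to B\hat{T}$. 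Translated to the geometric-cycle model of Section 3.5, if $\alpha$ is represented by a smooth triple $(\bar{B}, \bar{\alpha}, \bar{f})$, then $\beta \cdot \alpha$ is represented by $(\bar{B}, \bar{\alpha} \cup (\Pi \circ \bar{f})^*\beta, \bar{f})$, where $\beta$ is identified with its image in $H^*(B\hat{T})$ under Poincar\'e duality.

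Second, I would verify the identity at each Borel approximation level $n$, where it reduces to the equality
\[
\cS_{\bar{f}_k, n}\bigl(PD(\bar{\alpha} \cup (\Pi \circ \bar{f})^*\beta),\, x\bigr) \;=\; p_{M,n}^*\beta \;\cup\; \cS_{\bar{f}_k, n}(PD(\bar{\alpha}), x),
\]
where $p_{M,n}: M_{borel,n} \to B\hat{T}_n$ is the projection. The key point is that every space appearing in the composition \eqref{eq:Se1}--\eqref{eq:Se7} sits naturally over $B\hat{T}_n$: the parametrized moduli space $\overline{\cM}_{0,2}(\bar{P}_{f_k}, A, \{J_{\bar{b}}\})$ projects to $\bar{B}$ and hence to $B\hat{T}_n$; the divisors $(\bar{P}_{f_k})_{0}$, $(\bar{P}_{f_k})_{\infty}$, and $(S_{T,\infty})_{borel, n}$ all fibre over $B\hat{T}_n$; and all evaluation maps, pushforwards and pullbacks in the construction are morphisms of spaces over $B\hat{T}_n$. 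Cup product with a class pulled back from $B\hat{T}_n$ therefore commutes with all these operations by a routine application of the projection formula. Choosing auxiliary data $(\Omega_{\bar{b}}, J_{\bar{b}})$ so that the moduli space is regular (as in Lemma \ref{l:Gcomp}) ensures the fibre-product operations in the definition of $\cC_{\bar{f}_k}$ are transverse, which is what the projection formula needs.

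Third, I would pass to the limits $k \to \infty$, $\varprojlim_n$, $\varinjlim_d$ exactly as in Lemma \ref{l:SeGeoMap} and Theorem \ref{t:invSeidel}, noting that both the $R$-action on the source and the $R$-action on the target are compatible with these limits. The step requiring the most care is the projection-formula commutativity at finite level: one must verify that $\cC_{\bar{f}_k}$ commutes with cup product by $(\Pi \circ \bar{f})^*\beta$, which comes down to the observation above that all the evaluation maps on Gromov-compactified moduli spaces are maps of spaces over $B\hat{T}_n$. I expect no serious obstacle beyond careful bookkeeping of the various projections.
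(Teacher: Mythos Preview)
Your proposal is correct and follows essentially the same route as the paper: represent $\beta\cdot\alpha$ by the triple $(\bar B,\ \bar\alpha\cup(\Pi\circ\bar f)^*\beta,\ \bar f)$, check at each finite level that capping with a class pulled back from $B\hat T_n$ commutes with the composition \eqref{eq:Se1}--\eqref{eq:Se7}, and then pass to the limits. The only cosmetic difference is that you justify the first step via the localized multiplication formula \eqref{eq:multipbasis}, whereas the paper argues directly that the K\"unneth map \eqref{eq:kunnethR} and the pushforward along \eqref{eq:pointmult} are left $R$-linear; both arguments yield the same geometric representative.
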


\begin{proof}


Suppose that $\alpha$ is a represented by a sequence $\{(\bar{B}_{n},\bar{\alpha}_{n},\bar{f}_{n})\}_{n \in \mathbb{N}}$ such that $\bar{f}_{n}:\bar{B}_{n} \to (L_{poly} G/T)_{\le d, borel,n}$ is smooth for all $n$.
Let $\Pi_n: (L_{poly} G/T)_{\le d, borel,n} \to B\hat{T}_n$ be the projection and $\beta \in H^*(B\hat{T}_n)$.
We claim that the 
sequence $\{(\bar{B}_{n},\bar{\alpha}_{n} \cup (\Pi_n \circ \bar{f}_{n})^* \beta,\bar{f}_{n})\}_{n \in \mathbb{N}}$
represent the class $\beta \cdot \alpha \in \hat{H}^{\hat{T}}_*(L_{poly}G/T)$.

To see this, recall that the algebra structure on $\hat{H}_*^{\hat{T}}(LG/T) $ is defined to be the composition of the 
K\"unneth map \eqref{eq:kunnethR} and the map induced by the pointwise multiplication \eqref{eq:pointmult}.
Both of these maps are left $R$-linear.
Therefore, $\beta \cdot \alpha$ is the inverse limit of $(\bar{f}_n)_*( \bar{\alpha}_n \cap [\bar{B}_n]) \cap \Pi_n^* \beta$ over $n$. 
By the functorality of cap product, it is the same as 
 $(\bar{f}_n)_*( (\bar{\alpha}_n \cup (\Pi_n \circ \bar{f}_{n})^* \beta) \cap [\bar{B}_n])$.

Inspecting \eqref{eq:Se1}-\eqref{eq:Se7}, we see that
$\cS_{\bar{f}_{n},n}(\bar{\alpha}_{n} \cup (\Pi_n \circ \bar{f}_{n})^* \beta,-)=\cS_{\bar{f}_{n},n}(\bar{\alpha}_{n},-) \cap \Pi_{M,n}^*\beta$,
where $\Pi_{M,n}: M_{borel,n} \to  B\hat{T}_n$ is the projection.
By passing to the inverse limit over $n$, it shows that $\cS(\beta \cdot \alpha,-)=\beta \cdot \cS(\alpha,-)$.

\end{proof}


\section{Properties of Shift operators}
\subsection{Main properties}  \vskip 5 pt

\subsubsection{Module property}\label{s:module}
It will be convenient in this section to denote by $\rho_{\operatorname{id}}$ the obvious $\hat{T}$ action on $M$ given by $\rho_{\operatorname{id}} (a, t, m) = t \cdot m$.  Any fixed point of the form $\sigma [w] \in LG/T$ defines an equivarant cycle $$B_{\sigma [w]} \to LG/T_{borel} $$   In view of \eqref{eq: Taction0},  the corresponding divisor at zero $S_{T,0}$ is $\hat{T}$-equivariantly identified with $M$ equipped with the twisted action:  \begin{align*} \rho_{\sigma [w]} (a, t,m) = \rho_{\operatorname{id}} (a,  w^{-1} \sigma(a)^{-1}t w,  m)= \rho_{\operatorname{id}} (\mathcal{A}_{(\sigma[w])^{-1}}(a,t),m) \end{align*}  We denote the cohomology with respect to this twisted action by $H^*_{\hat{T},\rho_{\sigma [w]}}(M)$ and for simplicity we keep $H^*_{\hat{T}, \rho_{\operatorname{id}}}(M):=H^*_{\hat{T}}(M)$.  The map \begin{align*} \mathcal{C}_{\sigma[w]}: H^*_{\hat{T},\rho_{\sigma [w]}}(M)[q^{\pm 1}] \to H^*_{\hat{T}}(M)[q^{\pm 1}]  \end{align*} 
defined by \eqref{eq:Se5} is a map of $H^*(B\hat{T})$-modules (compare \cite[\S 3.16]{LJ21}).  We will let $H^*_{\hat{T}}(M)_{(\sigma [w])}$ be a copy of $H^*_{\hat{T}}(M)$  equipped with the twisted $R$-module structure \begin{align} \label{eq:twistedR} g \cdot_{\sigma w} \beta = \mathcal{A}_{(\sigma [w])^{-1}}(g) \cdot \beta.  \end{align} where $g \in R$, $\beta \in H^*_{\hat{T}}(M)$ and $\mathcal{A}_{(\sigma [w])^{-1}}(g)$ is defined in  \eqref{eq:actfun}.  In the present context,  the Kunneth map \eqref{eq:cQ}(or more precisely the component corresponding to $\sigma[w]$) \begin{align} \label{eq:Kunneth2} \cQ: H^*_{\hat{T}}(M) \cong H^*_{\hat{T},\rho_{\sigma [w]}}(M) \end{align} is simply the pull-back map in equivariant cohomology along the automorphism $\mathcal{A}_{(\sigma[w])^{-1}}: \hat{T} \to \hat{T}.$ This means that the shift-operator corresponding to this component is an $R$-linear map \begin{align*} \mathcal{S}_{\sigma [w]}: H^*_{\hat{T}}(M)_{(\sigma [w])}[q^{\pm 1}] \to H^*_{\hat{T}}(M)[q^{\pm 1}]. \end{align*}   Passing to the fraction field,  we obtain 
\begin{align} \label{eq:FRotimes} F_R \otimes \mathcal{S}_{\sigma [w]}: F_R \otimes_{R} H^*_{\hat{T}}(M)_{(\sigma [w])}[q^{\pm 1}] \to F_R\otimes_{R} H^*_{\hat{T}}(M)[q^{\pm 1}] \end{align}

Let $M^{T}$ denote the fixed point locus of the $T$-action on $M$.  By the Atiyah-Bott localization theorem,  restriction to the fixed point locus gives canonical identifications of $F_R \otimes_{R} H^*_{\hat{T}}(M)$ and $F_R \otimes_{R} H^*_{\hat{T}}(M)_{(\sigma [w])}$ with $F_R \otimes_{\mathbf{k}} \hat{H}_*(M^{T})$ as $\mathbf{k}$-modules.  Using these identifications,  we obtain a map: \begin{align} \label{eq:sigmaw} \mathcal{S}_{\sigma [w]}^{F}: F_R \otimes_{\mathbf{k}} \hat{H}_*(M^{T})[q^{\pm 1}]  \to F_R \otimes_{\mathbf{k}} \hat{H}_*(M^{T})[q^{\pm 1}] \end{align}

This map is $F_R$-linear if the source is given the $\sigma [w]$-twisted $F_R$-module structure from  \eqref{eq:twistedR}.

 We will make use of the following algebraic lemma: 
\begin{lem}\label{l:basic1} Suppose $i_A: A_0 \hookrightarrow A_1$ is an embedding of $\mathbf{k}$-algebras and $i_M: M_0 \hookrightarrow M_1$ is an embedding of $\mathbf{k}$-vector spaces.  Let $S_0: A_0\otimes_\mathbf{k} M_0 \to M_0$ and   $S_1: A_1\otimes_\mathbf{k} M_1 \to M_1$ be $\mathbf{k}$-linear maps such that the following diagram commutes:\[
\begin{tikzcd}
A_0 \otimes_\mathbf{k} M_0 \arrow{r}{S_0} \arrow[swap]{d}{i_A\otimes i_M}& M_0 \arrow{d}{i_M} \\
A_1\otimes_\mathbf{k} M_1 \arrow{r}{S_1} & M_1
\end{tikzcd}
\]
 If $S_1$ defines a left-module structure on $M_1$ over $A_1$,  then $S_0$ defines a left-module structure on $M_0$ over $A_0.$  \end{lem}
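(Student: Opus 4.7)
The plan is to verify the two defining axioms of a left-module directly --- associativity $S_0(a \otimes S_0(a' \otimes m)) = S_0(aa' \otimes m)$ and the unit axiom $S_0(1_{A_0} \otimes m) = m$ --- by transporting each identity through the embedding $i_M$, where the corresponding identity for $S_1$ is already known to hold.

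First I would address associativity. Given $a, a' \in A_0$ and $m \in M_0$, I apply $i_M$ to both sides of the desired equation. Using commutativity of the given square twice (first to move $S_0(a'\otimes m)$ past $i_M$ as $S_1(i_A(a') \otimes i_M(m))$, and then to move the outer $S_0$), the left-hand side becomes $S_1(i_A(a) \otimes S_1(i_A(a') \otimes i_M(m)))$. The right-hand side becomes $S_1(i_A(aa') \otimes i_M(m)) = S_1(i_A(a)i_A(a') \otimes i_M(m))$, using that $i_A$ is an algebra homomorphism. Associativity of the $A_1$-action on $M_1$ equates these two expressions, and injectivity of $i_M$ then yields the identity in $M_0$.

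Next, for the unit axiom, I use that $i_A$ preserves the unit (being an algebra embedding), so $i_A(1_{A_0}) = 1_{A_1}$. Then $i_M(S_0(1_{A_0} \otimes m)) = S_1(i_A(1_{A_0}) \otimes i_M(m)) = S_1(1_{A_1} \otimes i_M(m)) = i_M(m)$, and injectivity of $i_M$ forces $S_0(1_{A_0} \otimes m) = m$.

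There is no serious obstacle here: the lemma is purely formal and the argument is a two-step diagram chase for each axiom, with injectivity of $i_M$ used to cancel it at the end. The only thing to take note of is that the hypothesis that $i_A$ is an algebra embedding (as opposed to a mere linear embedding) is essential, since we need $i_A$ to preserve both the multiplication and the unit in order for the commutative diagram to translate the $A_1$-module structure back to $M_0$.
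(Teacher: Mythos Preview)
Your argument is correct and follows the same diagram chase as the paper's proof: push both sides of the associativity identity through $i_M$, use commutativity of the square and multiplicativity of $i_A$, invoke the $A_1$-module axiom, and cancel $i_M$ by injectivity. You additionally verify the unit axiom, which the paper's proof omits but which is handled by the same mechanism.
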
 
\begin{proof} 
This follows from simple diagram chasing.   
\begin{align*}
i_M(S_0(a_1a_2,m))=&S_1(i_A(a_1a_2),i_M(m))=S_1(i_A(a_1)i_A(a_2),i_M(m))\\
=&S_1(i_A(a_1),S_1(i_A(a_2),i_M(m)))=S_1(i_A(a_1),i_M(S_0(a_2,m)))\\
=&i_M(S_0(a_1,S_0(a_2,m)))
\end{align*}
The injectivity of $i_M$ implies that $S_0(a_1a_2,m)=S_0(a_1,S_0(a_2,m))$.
\end{proof} 

We have the following generalization of  Lemma \ref{ex:identity}.  

\begin{lem} \label{ex:Weyl} Let $w \in N_G(T)$ be a representative of $[w] \in W$  and  $\bar{f}': \bar{B} \to (LG/T)_{borel,n}$ is smooth and $\bar{f}=w \bar{f}'$,  then $\cS_{\bar{f},n}$ is given by $([w])_* \circ \cS_{\bar{f}',n}$.  Similarly, if $\bar{f}': \bar{B} \to (LG/T)_{borel,n}$ is smooth
and $\bar{f}=\bar{f}'w$, then $\cS_{\bar{f},n}$ is given by $\cS_{\bar{f}',n} \circ ([w])_*$. \end{lem}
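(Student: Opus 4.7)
\emph{Proof plan.} The plan is, in each of the two cases, to construct an explicit fiberwise Hamiltonian bundle isomorphism between the Seidel bundles $\bar{P}_{\bar{f}'}$ and $\bar{P}_{\bar{f}}$ covering the identity on $\bar{B}$, and then to use this isomorphism to identify the parametrized moduli spaces. Under such an identification, the evaluation maps at zero and infinity intertwine with the restrictions of the isomorphism to the corresponding divisors, and I expect these restrictions to be computable by inspection, producing the claimed formulas.

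For the left-multiplication case $\bar{f} = w\bar{f}'$, I would define $\Phi_w^L: \bar{P}_{\bar{f}'} \to \bar{P}_{\bar{f}}$ on the charts of \eqref{eq:Seidelbun} by
\[
[y, x, re^{2\pi i\theta}]_0 \mapsto [y, x, re^{2\pi i\theta}]_0, \qquad [y, x, re^{2\pi i\theta}]_\infty \mapsto [y, w\cdot x, re^{2\pi i\theta}]_\infty.
\]
Compatibility with the gluing is an immediate check: in $\bar{P}_{\bar{f}'}$ the identification uses $F'_H(y)(\theta)$, while in $\bar{P}_{\bar{f}}$ it uses $(wF'_H(y))(\theta) = w \cdot F'_H(y)(\theta)$. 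For the right-multiplication case $\bar{f} = \bar{f}'w$, I would symmetrically define $\Phi_w^R$ by multiplication by $w^{-1}$ on the $M$-factor at zero and the identity at infinity; compatibility with the gluing follows from the identity $F'_H(y)(\theta)\cdot w \cdot (w^{-1}x) = F'_H(y)(\theta)\cdot x$.

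With $\Phi_w^{L/R}$ in hand, I would pull back along them any admissible family of symplectic forms and any $\pi$-compatible family of almost complex structures chosen on $\bar{P}_{\bar{f}}$; by cobordism invariance (Lemma \ref{l:correlationmap}) the freedom in this choice does not affect $\cC_{\bar{f}}$. The isomorphism then induces a canonical bijection of parametrized moduli spaces $\ol{\cM}_{0,2}(\bar{P}_{\bar{f}'}, A, \{J'_{\bar{b}}\}) \cong \ol{\cM}_{0,2}(\bar{P}_{\bar{f}}, A, \{J_{\bar{b}}\})$ commuting with evaluation up to the restrictions of $\Phi_w^{L/R}$ on the divisors at $0$ and $\infty$. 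Inspecting these restrictions gives: for $\Phi_w^L$, the identity on $(\bar{P}_{\bar{f}'})_0$ and left-multiplication by $[w]$ on the $M$-factor of $(\bar{P}_{\bar{f}'})_\infty$; for $\Phi_w^R$, multiplication by $[w^{-1}]$ on $(\bar{P}_{\bar{f}'})_0$ and the identity on $(\bar{P}_{\bar{f}'})_\infty$. Propagating this through the pushforwards \eqref{eq:Se6}--\eqref{eq:Se7} and the K\"unneth identifications \eqref{eq:Se1}--\eqref{eq:Se4} then yields the two claimed identities $\cS_{\bar{f},n} = ([w])_* \circ \cS_{\bar{f}',n}$ and $\cS_{\bar{f},n} = \cS_{\bar{f}',n} \circ ([w])_*$ respectively.

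The main subtlety I anticipate is equivariance: left or right multiplication by $w \in N_G(T)$ does not commute with the $T$-action on the nose, but only up to the conjugation twist $g \mapsto w g w^{-1}$, so the isomorphisms $\Phi_w^{L/R}$ are strictly $\hat{T}$-equivariant only once one absorbs this Weyl twist using the right $T$-quotient passing from $P_F$ to $\bar{P}_{\bar{f}}$. Matching this twist against the corresponding twist inherent in $([w])_*$ (visible in \eqref{eq:twistedR}--\eqref{eq:Kunneth2}) is the principal bookkeeping task of the proof.
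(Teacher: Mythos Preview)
Your approach is essentially the paper's: build an explicit chartwise isomorphism of Seidel bundles and transport moduli spaces and auxiliary data across it. The gluing checks you wrote are correct, and you correctly flag the equivariance twist as the crux.

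Two points of comparison worth noting. First, before doing anything the paper fixes a model for $E\hat{T}$ that is simultaneously a universal bundle for $S^1\times G$; this is what makes ``$([w])_*$ on $\hat{H}_*(M_{borel,n})$'' literally meaningful (the Weyl group then acts on $B\hat{T}$ and $M_{borel}$ on the nose), and it lets one lift $w$ to act on the $E\hat{T}$ factor as well. You do not mention this, and without it the target of your isomorphism and the statement of the lemma are awkward to match. Second, rather than wrestling directly with the fact that $\Phi_w^L$ is only $\hat{T}\times T$-equivariant after a conjugation twist, the paper factors the comparison through an auxiliary \emph{twisted} bundle $P^{tw}_{F'}$ (built with the same $Y_F$ but the $w$-twisted action). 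The map $P_F\to P^{tw}_{F'}$ given by your formulas is then genuinely equivariant, yielding one commutative square; a second, purely formal identification $\bar P^{tw}_{f'}\simeq \bar P_{f'}$ (not coming from an equivariant map upstairs, but well-defined on quotients, exactly like the $W$-action on $(LG/T)_{borel}$) gives the remaining square. This two-step factorization is precisely the ``principal bookkeeping task'' you allude to; the paper's device of the intermediate twisted bundle is a clean way to execute it. For the right-multiplication case the paper proceeds symmetrically and more briefly, as you anticipate.
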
 
\begin{proof} 
Choose a model for the universal bundle $E\hat{T}$ which is also a universal bundle for $S^1\times G$ and form the classifying space $B\hat{T}$ using this model.  It follows that the Weyl group $W$ acts directly on $B\hat{T}$ and $M_{borel}.$  Let $\bar{B}_n=B\hat{T}_n$ and $\bar{f}: \bar{B}_n \to (LG/T)_{borel,n}$ be the inclusion $\lbrace [w] \rbrace \times B\hat{T}_n \subset (LG/T)_{borel,n}$.
The equivalence relation in \eqref{eq:Seidelbun} reduces to $(y,x,e^{2\pi i \theta})_0 \sim (y,wx,e^{2\pi i \theta})_\infty$.
Therefore, the map
\begin{align}
(y,x,e^{2\pi i \theta})_0 &\mapsto  (y,x,e^{2\pi i \theta})_0 \label{eq:Weyl1}\\
(y,x,e^{2\pi i \theta})_\infty&\mapsto  (y,w^{-1}x,e^{2\pi i \theta})_{\infty} \label{eq:Weyl2}
\end{align}
is an isomorphism from $P_F$ to $Y \times M  \times \mathbb{P}^1$, so $i_w: \bar{P}_f \simeq M_{borel, n} \times \mathbb{P}^1$.  Note that this isomorphism restricted to the divisor $S_{T,0}$ intertwines the Kunneth maps from  \eqref{eq:Kunneth2} i.e.  there is a commutative triangle:

\begin{tikzcd}[column sep=small]
  H^*_{\hat{T}}(M) \arrow{r}{\cQ_{[w]}}  \arrow{rd}{\cQ_{[\operatorname{id}]}=\operatorname{id}} 
  & H^*_{\hat{T},\rho_{[w]}}(M) \arrow{d}{(i_w)_*} \\
    & H^*_{\hat{T}}(M)
\end{tikzcd}
 The argument of Lemma \ref{ex:identity} then implies that $\cS_{\bar{f},n}=([w])_*$. 

 More generally,  let  $\bar{f}': \bar{B} \to (LG/T)_{borel,n}$ be smooth and $\bar{f}=w\bar{f}'$.
Let $F=(F_H,F_{\hat{T}}):Y_{F} \to LG \times E\hat{T}$ be the pul-back along $\bar{f}$.
It looks different from (see \eqref{eq:Fmap}) because we don't use a split model for $E\hat{T}$.
Let $\phi_{w^{-1}}: LG \times E\hat{T}$ be given by $(\gamma, z) \mapsto (w^{-1}\gamma, w^{-1}z)$, which covers the action of $[w^{-1}]$ on $(LG/T)_{borel}$.
Let $F'=\phi_{w^{-1}} \circ F=(F'_H, F'_{\hat{T}})$, which covers the map $\bar{f}'=w^{-1}\bar{f}$.
It satisfies
\begin{align}
F'((a,t_1,t_2)y)(\theta)=&(w^{-1}t_1F_H(y)(\theta-a) t_2^{-1}, w^{-1}(a,t_1)F_{\hat{T}}(y) )\\
=&(w^{-1}t_1w w^{-1}F_H(y)(\theta-a) t_2^{-1}, (a,w^{-1} t_1 w) w^{-1}F_{\hat{T}}(y) ) \\
=&((a, w^{-1} t_1 w, t_2)F'(y))(\theta)\\
=&((\mathcal{A}_{w^{-1}}(a,t_1), t_2)F'(y))(\theta)  \label{eq:tWisted}
\end{align}
In other words, $F'$ is equivariant with respect to the $w^{-1}$-twisted action on $LG \times E\hat{T}$.
Let $P^{tw}_{F'}=(Y_F \times M \times D^2_0 \cup Y_F \times M \times D^2_{\infty})/ \sim$ where the equivalent relation is given by $(y,x,e^{2\pi i \theta})_0 \sim (y, F'_H(y)(\theta)(x), e^{2\pi i \theta})_{\infty}$.
We can define a diffeomorphism $P_{F} \to P^{tw}_{F'}$ via \eqref{eq:Weyl1} \eqref{eq:Weyl2}.
It is $\hat{T} \times T$ equivariant with respect to the standard action on $P_{F}$ and the following twisted action on 
$P^{tw}_{F'}$
\begin{align*}
(a,t_1,t_2)\cdot (y,x,e^{2\pi i \theta})_0 &=  ((a,t_1,t_2) \cdot y,t_2 x,e^{2\pi i (\theta+a)})_0 \\
(a,t_1,t_2) \cdot (y,x,e^{2\pi i \theta})_\infty&=  ((a,t_1,t_2) \cdot y,w^{-1}t_1w x,e^{2\pi i (\theta+a)})_{\infty} 
\end{align*}
Note that this twisted action is well-defined on $P^{tw}_{F'}$ because of \eqref{eq:tWisted}.
Taking the quotient by the $S^1 \times T \times T$ action, we get an isomorphism
$i_w: \bar{P}_f \simeq \bar{P}^{tw}_{f'}$. As a result, we have the following commutative diagram (the notation $[q^{\pm 1}]$ is omitted for convenience)
\begin{equation*}
\begin{tikzcd}
\hat{H}_*(\bar{B}) \otimes \hat{H}_*^{\hat{T}}(M) \arrow{r} \arrow{d}{ =}
& \hat{H}^{S^1 \times T\times T}_*(Y_F \times M) \simeq \hat{H}_*((\bar{P}_{f})_0) \arrow{r}{\cC_{\bar{f}}} \arrow{d}{((i_w)_{0})_*}
& \hat{H}_*((\bar{P}_{f})_{\infty}) \arrow{d}{((i_w)_{\infty})_*}
\\
\hat{H}_*(\bar{B}) \otimes \hat{H}_*^{\hat{T}}(M) \arrow{r}
& \hat{H}^{S^1 \times T\times T}_*(Y_F \times M) \simeq \hat{H}_*((\bar{P}^{tw}_{f'})_0) \arrow{r}{\cC^{tw}_{\bar{f}'}}
& \hat{H}_*((\bar{P}^{tw}_{f'} )_{\infty}) 
\end{tikzcd}
\end{equation*}
The map $((i_w)_{0})_*$ is the identity map but $((i_w)_{\infty})_*$ is induced by $w^{-1}$ on $M$.

Now we want to compare $\cC^{tw}_{\bar{f}'}$ with $\cC_{\bar{f}'}$.
Let $Y_{F'}=Y_F$ but we equip it with a different $\hat{T} \times T$-action such that $F'$ is (untwisted) $\hat{T} \times T$-equivariant.
Even though the identity map $Y_F \times M \to Y_{F'} \times M$ is not $\hat{T} \times T$, it descends to a well-defined map to the quotient (just like $\phi_{w^{-1}}$ is not $\hat{T} \times T$-equivariant but it descends  to an action on $(LG/T)_{borel}$).
Similarly, the identity induces an isomorphism $\bar{P}^{tw}_{f'} \simeq \bar{P}_{f'}$ even though it does not come form an $\hat{T} \times T$-equivariant isomorphism from $P^{tw}_{F'}$ to $P_{F'}$.
Therefore, we have another commutative diagram (the notation $[q^{\pm 1}]$ is omitted for convenience)
\begin{equation*}
\begin{tikzcd}
\hat{H}_*(\bar{B}) \otimes \hat{H}_*^{\hat{T}}(M) \arrow{r} \arrow{d}{=}
& \hat{H}^{S^1 \times T\times T}_*(Y_F \times M) \simeq \hat{H}_*((\bar{P}^{tw}_{f'})_0) \arrow{d}{\simeq} \arrow{r}{\cC^{tw}_{\bar{f}'}}
& \hat{H}_*((\bar{P}^{tw}_{f'} )_{\infty}) \arrow{d}{\simeq}
\\
\hat{H}_*(\bar{B}) \otimes \hat{H}_*^{\hat{T}}(M) \arrow{r}
& \hat{H}^{S^1 \times T\times T}_*(Y_{F'} \times M) \simeq \hat{H}_*((\bar{P}_{f'})_0) \arrow{r}{\cC_{\bar{f}'}}
& \hat{H}_*((\bar{P}_{f'} )_{\infty}) 
\end{tikzcd}
\end{equation*}
Combining the two commutative diagram, it implies that $\cS_{\bar{f},n}$ is given by $([w])_* \circ \cS_{\bar{f}',n}$.


Similarly, if $\bar{f}': \bar{B} \to (LG/T)_{borel,n}$ is smooth
and $\bar{f}=\bar{f}'w$, then $\bar{P}_f \simeq \bar{P}_{f'}$ via the maps
\begin{align*}
(y,x,e^{2\pi i \theta})_0 &\mapsto  (y,wx,e^{2\pi i \theta})_0 \\
(y,x,e^{2\pi i \theta})_\infty&\mapsto  (y,x,e^{2\pi i \theta})_{\infty} 
\end{align*}
It implies that $\cS_{\bar{f},n}$ is given by $\cS_{\bar{f}',n} \circ ([w])_*$.
\end{proof}

We are now in a position to prove the module property of shift-operators.   

\begin{thm}\label{t:proof1} The action $$ \cS: \hat{H}_*^{\hat{T}}(LG/ T) \otimes_\mathbf{k}\hat{H}_*^{\hat{T}}(M)[q^{\pm 1}] \to \hat{H}_*^{\hat{T}}(M)[q^{\pm 1}] $$ defines a module structure. \end{thm}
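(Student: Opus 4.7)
The plan is to apply the algebraic criterion of Lemma \ref{l:basic1} after base-changing to the fraction field $F_R$, where the affine nil-Hecke algebra becomes free with the distinguished basis $\{e_{\sigma[w]}\}_{\sigma[w] \in \tilde{W}}$. Concretely, I take $A_0 = \hat{H}_*^{\hat{T}}(LG/T)$ with its embedding into $A_1 = \mathcal{N} \cong F_R \otimes_R \hat{H}_*^{\hat{T}}(LG/T)$, and take $M_0 = \hat{H}_*^{\hat{T}}(M)[q^{\pm 1}]$ with its embedding into $M_1 = F_R \otimes_R \hat{H}_*^{\hat{T}}(M)[q^{\pm 1}]$. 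Injectivity of $i_M$ follows from equivariant formality of the Hamiltonian $G$-space $M$ via Atiyah--Bott localization (in characteristic zero, as elsewhere in the paper). The extension of $\cS$ to an $F_R$-bilinear operation is provided by Lemma \ref{l:linearity}, so the hypotheses of Lemma \ref{l:basic1} are in place.

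With this reduction, it suffices to verify $\cS^F(e_{\tilde{w}_1} \ast_{fp} e_{\tilde{w}_2}, m) = \cS^F(e_{\tilde{w}_1}, \cS^F(e_{\tilde{w}_2}, m))$ on basis elements $\tilde{w}_i = \sigma_i [w_i] \in \tilde{W}$. Using Lemma \ref{ex:Weyl} to factor $\cS_{\sigma[w]} = \cS_\sigma \circ [w]_*$ (or its left-hand analogue), together with the group law $\sigma_1[w_1] \cdot \sigma_2[w_2] = (\sigma_1 + w_1 \sigma_2 w_1^{-1})[w_1 w_2]$ in $\tilde{W}$, the verification reduces to two sub-claims: (i) the composition law $\cS_{\sigma_1 + \sigma_2} = \cS_{\sigma_1} \circ \cS_{\sigma_2}$ for pure co-characters $\sigma_1, \sigma_2 \in \mathcal{X}(T)$, and (ii) the Weyl conjugation equivariance $[w]_* \circ \cS_\sigma = \cS_{w \sigma w^{-1}} \circ [w]_*$. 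The scalar twist $\mathcal{A}_{\tilde{w}_1}(f_2)$ appearing in \eqref{eq:multipbasis} is automatically handled by the twisted $R$-module structure \eqref{eq:twistedR} combined with Lemma \ref{l:linearity}.

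Sub-claim (i) is the classical composition property of abelian shift operators from \cite{MR2753265, LJ21}. I would derive it geometrically in the parametrized setting of \S\ref{ss:parametrizedmoduli}: the Seidel space $X_{\sigma_1 \cdot \sigma_2}$ associated to the product loop admits a symplectic degeneration to the nodal union $X_{\sigma_1} \cup X_{\sigma_2}$, glued along the divisor at infinity of the first and the divisor at zero of the second (both canonically identified with $M$). A standard neck-stretching and gluing argument for parametrized $J$-holomorphic sections then identifies $\cS_{\sigma_1 + \sigma_2}$ with $\cS_{\sigma_1} \circ \cS_{\sigma_2}$. Sub-claim (ii) follows from the naturality of the universal Seidel bundle under the adjoint action of $N_G(T)$, which carries the loop $\sigma$ to $w \sigma w^{-1}$ while transporting $M$ by $[w]$.

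The principal technical obstacle is carrying out the neck-stretching argument in sub-claim (i) $\hat{T}$-equivariantly over the parameter base, with admissible families $\{\Omega_{\bar{b}}\}$ and $\pi$-compatible $\{J_{\bar{b}}\}$ that are simultaneously regular at the degenerate limit and on the smooth side, and ensuring that the resulting identification of parametrized moduli spaces passes to the inverse limit over the Borel approximation index $n$ and the direct limit over the cell-filtration index $d$ defining $\hat{H}_*^{\hat{T}}(L_{poly}G/T)$. A uniform control on the contributing section classes, in the spirit of Lemma \ref{r:inverselimit}, will be required to ensure this compatibility.
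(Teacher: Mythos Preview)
Your proposal follows essentially the same route as the paper: apply Lemma \ref{l:basic1} after base-changing to $F_R$, use Lemma \ref{ex:Weyl} to peel off Weyl-group factors, and reduce to the abelian composition law for co-characters from \cite{LJ21}. Two small corrections are worth noting. First, Lemma \ref{l:linearity} alone does not provide the extension $S_1$ on $M_1$: that lemma gives left $R$-linearity in the \emph{algebra} variable, but $\cS_{\sigma[w]}$ is only \emph{twisted}-$R$-linear in the \emph{module} variable, so one cannot simply tensor up; the paper instead defines $\cS^F$ summand-by-summand on the fixed-point basis $\{e_{\sigma[w]}\}$ using the twisted structure \eqref{eq:twistedR} and then checks explicitly that the square in Lemma \ref{l:basic1} commutes. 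Second, your ``principal technical obstacle'' is unnecessary: the paper does not re-derive sub-claim (i) via a parametrized neck-stretching argument but simply invokes the module property for abelian shift operators already established in \cite{LJ21}.
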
 
\begin{proof} 
We wish to apply Lemma \ref{l:basic1} with \begin{align*} A_0:= \hat{H}_*^{\hat{T}}(LG/ T),  A_1:=F_R \otimes_{R} \hat{H}_*^{\hat{T}}(LG/ T) \\ M_0:=  \hat{H}_*^{\hat{T}}(M)[q^{\pm 1}] ,  M_1:= F_R \otimes_{\mathbf{k}} \hat{H}_*(M^{T})[q^{\pm 1}] ,\end{align*} the map $S_0$ will be $\cS$ and the maps $i_A$, will be the obvious inclusion map.  The map $i_M$ will be the restriction map  $i_{Fix}^*: \hat{H}_*^{\hat{T}}(M)[q^{\pm 1}]  \to \hat{H}_*^{\hat{T}}(M^{T})[q^{\pm 1}]  \cong R\otimes_\mathbf{k} \hat{H}_*(M^{T})[q^{\pm 1}] $ followed by tensoring.  This map is injective by \cite[\S 7]{Atiyah-Bott}.  \vskip 5 pt

\emph{Step 1:} The first step is to construct a map: \begin{align*} \cS^{F}:  (F_R \otimes_R \hat{H}_*^{\hat{T}}(LG/ T)) \otimes_{\mathbf{k}} (F_R \otimes_{\mathbf{k}} \hat{H}_*(M^{T})[q^{\pm 1}] ) \to F_R \otimes_{\mathbf{k}} \hat{H}_*(M^{T})[q^{\pm 1}]  \end{align*}
so that the diagram commutes.  To define this,  use $F_R \otimes_{R} \hat{H}_*^{\hat{T}}(LG/ T) \cong \bigoplus_{\sigma [w]}F_R \cdot e_{\sigma [w]}$,  to view this as \begin{align} \label{eq:fix1} \cS^{F}:  \bigoplus_{\sigma [w]} (F_R \cdot e_{\sigma [w]} \otimes_\mathbf{k} (F_R \otimes_{\mathbf{k}} \hat{H}_*(M^{T})[q^{\pm 1}] ) \to F_R \otimes_{\mathbf{k}} \hat{H}_*(M^{T})[q^{\pm 1}]  \end{align}

Define the map on each $\sigma [w]$-summand of \eqref{eq:fix1} by $\cS_{\sigma [w]}^{F}$ from \eqref{eq:sigmaw}.    
To check commutativity of the diagram,  note that,  on each summand,  $\cS^F(id \otimes i_M)$ is equivalent to $\cS_{\sigma [w]}$ now viewed as a map $H^*_{\hat{T}}(M) \to F_R\otimes_{R} H^*_{\hat{T}}(M).$ It follows that  $\cS^F(id \otimes i_M)$ is equivalent to just tensoring $\cS$ on the left to obtain $$F_R \otimes_R \cS: F_R \otimes_{R} \hat{H}_*^{\hat{T}}(LG/ T) \otimes_\mathbf{k}  \hat{H}_*^{\hat{T}}(M)[q^{\pm 1}]  \to F_R \otimes_{R} \hat{H}_*^{\hat{T}}(M)[q^{\pm 1}].$$ From here,  it follows that $\cS^F(i_A \otimes i_M)=i_M \circ \cS.$ \vskip 5 pt

\emph{Step 2:} It remains to check that $\cS^F$ is a module map.  Because each $\cS_{\sigma [w]}^{F}$ is $\sigma w$-twisted,  it follows that for any $f \in F_R$,  \begin{align*} \cS_{\sigma [w]}^{F} \circ \cS_f^{F} = \cS_{(\mathcal{A}_{\sigma [w]}(f))}^{F} \circ \cS_{\sigma w}^{F} = \cS_{(\mathcal{A}_{\sigma [w]}(f) \cdot \sigma [w])}^{F}= \cS_{\sigma [w] \cdot f}^{F} \end{align*}

It therefore suffices to check that \begin{align*} \cS_{\sigma_1 [w_1]}^{F} \circ \cS_{\sigma_2 [w_2]}^{F} = \cS_{(\sigma_1 [w_1]) \cdot (\sigma_2 [w_2])}^{F}.  \end{align*} Because these operators are equivalent to the corresponding $F_R$ linear extensions from \eqref{eq:FRotimes},  it suffices to check $\cS_{\sigma_1 [w_1]} \circ \cS_{\sigma_2 [w_2]} = \cS_{(\sigma_1 [w_1]) \cdot (\sigma_2 [w_2])}.$ This reduces this to the case of abelian shift operators studied in \cite{LJ21} by noting that \begin{align*} \cS_{\sigma_{1} [w_{1}]} \circ \cS_{\sigma_2 [w_2]} &= \cS_{w_{1}(w_{1}^{-1} \sigma_{1} [w_{1}])} \circ \cS_{\sigma_{2} [w_{2}]} \\&=\cS_{[w_{1}]} \circ \cS_{(w_1^{-1} \sigma_1 [w_1])} \circ \cS_{\sigma_2} \circ \cS_ {[w_{2}]} \\ &= \cS_{[w_{1}]} \circ \cS_{(w_1^{-1} \sigma_1 [w_1])\cdot \sigma_2} \circ \cS_ {[w_{2}]} \\&= \cS_{(\sigma_1 [w_1])\cdot (\sigma_2 \cdot [w_{2}])}  \end{align*}


In the second and fourth lines we have used Lemma \ref{ex:Weyl}.  The third line is where we have used the module property of abelian shift operators \cite{LJ21}.  
  \end{proof} 


\subsubsection{Quantum connection}\label{s:connection}
We next examine the interplay between our shift operators and the quantum connection (compare \cite[Cor.  3.15]{Iritani} or  \cite[Theorem 1.6]{LJ21}).  For simplicity,  we consider a one-parameter connection along the direction $-c_1^{\hat{T}}(M) \in  H^2_{\hat{T}}(M)$ (an equivariant version of the ``the anti-canonical line" in the terminology of \cite{Galkin-Golyshev-Iritani}).  

 The character $\hat{T}=S^1 \times T \to S^1$ coming from the projection to the first factor induces an algebra map
$H^*(BS^1) \simeq \mathbf{k}[\hat{y}] \to H^*(B\hat{T})$ and we denote the image of the positive generator
by $u \in H^*(B\hat{T})$.
For elements in $A:=H^*(B\hat{T})[q^{\pm 1}] $,  we define
$u q\frac{d}{d q}: A \to A$ to be the $H^*(B\hat{T})$-linear map such that
\begin{align*}
u q\frac{d}{d q}(q^ny):= n q^n u y
\end{align*}
 for any $y \in H^*(B\hat{T})$.
Recall that $QH^*_{\hat{T}}(M):=H^*_{\hat{T}}(M) [q^{\pm 1}]=A \otimes_{H^*(B\hat{T})} H^*_{\hat{T}}(M)$.
By slight abuse of notation, we denote $u q\frac{d}{d q} \otimes id: QH^*_{\hat{T}}(M) \to QH^*_{\hat{T}}(M)$ by $u q\frac{d}{d q}$.

For a class $\beta \in H^*_{\hat{T}}(M)$, there is a well-defined quantum multiplication $\beta \ast_{QH}: QH^*_{\hat{T}}(M) \to QH^*_{\hat{T}}(M)$ (see \cite[Section 3.10]{LJ21}).  
The form $\omega$ is $G$-invariant and the space of $G$-invariant compatible almost complex structures $J$ is contractible (in particular non-empty) \cite[Example D.12]{Ginzburg4}.  Choose one such $J$ so that the determinant $det_\mathbb{C}(TM)$ becomes an $G$-equivariant complex line bundle.  It therefore has an equivariant first Chern class $c_1^{S^1 \times G}(M)$, which restricts to a Weyl-invariant class $c_1^{\hat{T}}(M) \in H^2_{\hat{T}}(M).$ Note that $c_1^{S^1 \times G}(M)$ and $c_1^{\hat{T}}(M)$ are well-defined by contractibility of the space of such $J$.  
The quantum connection is defined by 
\begin{align*}
\nabla_{q\partial_q}= u q \frac{d}{d q}+ -c_1^{\hat{T}}(M) \ast_{QH}:  QH^*_{\hat{T}}(M) \to QH^*_{\hat{T}}(M)
\end{align*}
where  as above $\ast_{QH}$ denotes the equivariant quantum product. 

\begin{rem} Note that this differs from the standard quantum connection by multiplication by $u$ and thus only gives a connection in the direction $u q \frac{d}{d q}.$ On the other hand it is well-defined on $QH^*_{\hat{T}}(M)$ without inverting $u$.   \end{rem} 



\begin{thm}\label{t:commute}
For any $a \in H_*^{\hat{T}}(LG/T)$, the commutator $[\cS(a,-), \nabla_{q\partial_q}]$ vanishes.
\end{thm}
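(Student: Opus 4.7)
The plan is to reduce Theorem \ref{t:commute} to property (C) of the classical shift operators (\cite{Iritani, LJ21}) by combining the fixed-point basis of $\hat{H}_*^{\hat{T}}(LG/T)$ with the factorization provided by Lemma \ref{ex:Weyl}.

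First, I would observe that the commutator $[\cS(a,-), \nabla_{q\partial_q}]$ is $R$-linear in $a$: the shift operator is left $R$-linear in $a$ by Lemma \ref{l:linearity}, while $u q \partial_q$ and quantum multiplication by $c_1^{\hat{T}}(M)$ both commute with the $R$-action on $QH^*_{\hat{T}}(M)$.  By equivariant formality for Hamiltonian actions, $H^*_{\hat{T}}(M)$ is free over $R$, so $QH^*_{\hat{T}}(M) \hookrightarrow F_R \otimes_R QH^*_{\hat{T}}(M)$ is injective.  Base changing to $F_R$ and invoking \eqref{eq:alternativebasis}, it suffices to check $[\cS(e_{\sigma[w]}, -), \nabla_{q\partial_q}] = 0$ for every $\sigma[w] \in \tilde{W}$.

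Each fixed-point class $e_{\sigma[w]}$ is represented, up to a unit in $F_R$, by the constant cycle $\bar{f}_{\sigma w}: B\hat{T} \to (LG/T)_{borel}$ at $\sigma w \in LG/T$, where $\sigma \in \mathcal{X}(T)$ and $w \in N_G(T)$.  Since $\bar{f}_{\sigma w}$ factors as $\bar{f}_{\sigma} \cdot w$ (right multiplication), the second part of Lemma \ref{ex:Weyl} gives
\[\cS_{\bar{f}_{\sigma w}} = \cS_{\bar{f}_{\sigma}} \circ [w]_{\ast},\]
so
\[[\nabla_{q\partial_q}, \cS_{\bar{f}_{\sigma w}}] = [\nabla_{q\partial_q}, \cS_{\bar{f}_{\sigma}}] \circ [w]_{\ast} + \cS_{\bar{f}_{\sigma}} \circ [\nabla_{q\partial_q}, [w]_{\ast}].\]
The Weyl factor $[w]_{\ast}$ commutes with $u q \partial_q$ because the loop-rotation parameter $u$ is Weyl-invariant, and with multiplication by $c_1^{\hat{T}}(M)$ because this class is Weyl-invariant (as the restriction of $c_1^{S^1 \times G}(M)$) and the quantum product is Weyl-equivariant; hence $[\nabla_{q\partial_q}, [w]_{\ast}] = 0$.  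The remaining operator $\cS_{\bar{f}_{\sigma}}$ is built from the Seidel space $X_{\sigma}$ associated to the loop $\sigma: S^1 \to T \subset G$, which depends only on the restriction of the Hamiltonian action to $T$.  Unwinding the construction of Section \ref{ss:parametrizedmoduli} at this constant cycle identifies $\cS_{\bar{f}_{\sigma}}$ with the classical $\hat{T}$-equivariant abelian shift operator of \cite{MR2753265, LJ21}, for which property (C) gives $[\cS_{\bar{f}_\sigma}, \nabla_{q\partial_q}] = 0$.  Combining the two vanishings completes the argument.

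The main obstacle will be this last identification: verifying that our parametrized construction, specialized to a constant $\hat{T}$-cycle at $\sigma \in LG$, really does recover the classical abelian shift operator with its full $\hat{T}$-equivariant structure.  Both operators are built from the same Seidel space $X_{\sigma}$ with the same $\hat{T}$-action, but reconciling the Borel-model bookkeeping of the two frameworks requires some care.  An alternative route would be to redo the geometric argument of \cite[Theorem 1.6]{LJ21} directly in the parametrized setting: construct an equivariant lift $\hat{c}_1 \in H^2_{\hat{T}}(\bar{P}_f)$ of the vertical first Chern class of $\bar{p}_f: \bar{P}_f \to \bar{B}$, apply the divisor equation to $\cM_{0,3}(\bar{P}_f, A, \{J_{\bar{b}}\})$ to extract a factor $c_1^{vert}(A)$, and match the resulting sum $-u \sum_A c_1^{vert}(A) (\cdots) q^{-c_1^{vert}(A)}$ against $[u q \partial_q, \cS_{\bar{f}}]$.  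This bypasses the identification step but requires comparing equivariant representatives of $\hat{c}_1$ supported near $(\bar{P}_f)_0$ and $(\bar{P}_f)_\infty$ with pull-backs of $c_1^{\hat{T}}(M)$ along $i_0$ and $i_\infty$, which involves a nontrivial equivariant correction from the loop rotation.
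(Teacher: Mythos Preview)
Your proposal is correct and follows essentially the same route as the paper: localize to $F_R$, reduce to the fixed-point basis $e_{\sigma[w]}$, peel off the Weyl factor using Lemma~\ref{ex:Weyl}, and invoke the abelian case of \cite{LJ21} for the remaining co-character $\sigma$. The obstacle you flag is exactly the one the paper addresses: rather than making a black-box identification with the operator of \cite{LJ21}, the paper reproduces the intertwining-relation argument (the divisor equation applied to $c_{1,vert}^{\hat{T}}$ on $E(\sigma)$, matching the weighted count against $uq\partial_q$) directly in its own notation for the single co-character $\sigma$---which is precisely the second route you sketch, but carried out only after reducing to a point cycle rather than in the full parametrized setting.
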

\begin{proof}
Recall that $R=H^{-*}(B\hat{T})$ and $F_R$ is the field of fraction.
Similar to above, we consider
\begin{align*}
F_R \otimes \cS(-,-):& F_R \otimes_R H_*^{\hat{T}}(LG/T) \otimes QH^*_{\hat{T}}(M) \to F_R \otimes_R QH^*_{\hat{T}}(M)  \\
F_R \otimes \nabla_{q\partial_q}:& F_R \otimes_R QH^*_{\hat{T}}(M) \to F_R \otimes_R QH^*_{\hat{T}}(M)
\end{align*}
and it suffices to show that $F_R \otimes \cS(a,\nabla_{q\partial_q} y)- (F_R \otimes \nabla_{q\partial_q})( (F_R \otimes \cS)(a, y))=0$ for all $a \in  F_R \otimes_R H_*^{\hat{T}}(LG/T)$ and $y \in QH^*_{\hat{T}}(M)$.  As above,  it suffices to consider the case where the operator is given by $\cS_{\sigma [w]}^{F}$ from \eqref{eq:sigmaw}.  $\cS_{[w]}$ commutes with the connection because the Weyl action commutes with the quantum product (simply because one can use the symplectomorphism to pushforward the almost complex structures/holomorphic spheres).  

Thus,  we are again reduced to the case of a co-character $\cS_{\sigma}$ as in  \cite[Theorem 3.6]{LJ21}.  As strictly speaking the setup for Theorem 3.6 of \emph{loc.  cit} is slightly different we explain how to adapt the proof to our setting.   Let $E(\sigma)$ be the Seidel space and let $c_{1, vert}^{\hat{T}} \in H^2_{S^1 \times T}(E(\sigma))$ be the $\hat{T}$-equvariant first Chern class of the vertical tangent bundle.  Then the intertwining relation of \cite[Theorem 3.4]{LJ21} says that for any $y \in QH^*_{\hat{T}}(M)$,  
\begin{align} \label{eq:intertwining1} \mathcal{C}_{\sigma}(\cQ(y)\ast (c_{1, vert}^{\hat{T}})_{|{0}})- \mathcal{C}_{\sigma}(\cQ(y)) \ast (c_{1, vert}^{\hat{T}})_{|\infty}= u\mathcal{WC}_{\sigma}(\cQ(y),  c_{1, vert}^{\hat{T}}) \end{align} 
where $(c_{1, vert}^{\hat{T}})_{|{0}}$(respectively $(c_{1, vert}^{\hat{T}})_{|\infty}$ is the restriction of the vertical first Chern class to the divisor at 0 (respectively $\infty$).  The operator $\mathcal{WC}_{\sigma}(\cQ(y),  c_{1, vert}^{\hat{T}})$ is a Seidel map where curves in section class $A$ are weighted by $c_{1, vert}^{\hat{T}}(A)=c_{1, vert}(A)$.  We have that $(c_{1, vert}^{\hat{T}})_{|\infty}=c_1^{\hat{T}}(M)$ and $(c_{1, vert}^{\hat{T}})_{|{0}}=\cQ(c_1^{\hat{T}}(M))$.  Using this together with the fact that $\cQ$ commutes with quantum product,  we can rewrite \eqref{eq:intertwining1} as 
\begin{align} \label{eq:intertwining2} \mathcal{S}_{\sigma}(y \ast c_{1}^{\hat{T}}(M))-\mathcal{S}_\sigma(y) \ast c_{1}^{\hat{T}}(M)= u\mathcal{WC}_{\sigma}(\cQ(y),  c_{1, vert}^{\hat{T}}) \end{align} 
On the other hand from the definition of $\mathcal{WC}_{\sigma}(\cQ(y),  c_{1, vert}^{\hat{T}})$,  we have that 
\begin{align} \label{eq:intertwining3} uq\frac{d}{dq} (\mathcal{S}_{\sigma}(y))-\mathcal{S}_\sigma(uq\frac{d}{dq}y)= u\mathcal{WC}_{\sigma}(\cQ(y),  c_{1, vert}^{\hat{T}}) \end{align} 
The fact that the right hand side of this equation and \eqref{eq:intertwining2} are the same implies the commutation with the connection.  
\end{proof} 



\subsection{A general Peterson map} 

We next consider the based-loop space $\Omega G$.  This admits a natural action of $\hat{G}:=S^1 \times G$ given by \begin{align} \label{eq:conjug} G\times \Omega G \to \Omega
  G  \\ g \cdot \gamma(t) = g\gamma(t)g^{-1}.  \nonumber
\end{align} 

\begin{align} \label{eq:loopy} S^1 \times \Omega G \to
  \Omega G  \\ a \cdot \gamma(t) =
  \gamma(t-a)\gamma(-a)^{-1}.  \nonumber 
\end{align} 

The space $\Omega G$ admits a Pontryagin product given by pointwise multiplication:
\begin{align} \label{eq:mG} m_{\Omega G}: \Omega G \times \Omega G \to \Omega G \end{align}
This map \eqref{eq:mG} is manifestly $G$-equivariant if $\Omega G \times \Omega G$ is given the diagonal $G$-action.  It therefore induces a map \begin{align} \label{eq: Ghatmg} \hat{m}_{G}: \hat{H}_*^G(\Omega G) \otimes \hat{H}_*^G(\Omega G) \to \hat{H}_*^G(\Omega G)  \end{align} 
where $\hat{m}_{G}$ is the composition of $m_{\Omega G,*}$ with the restriction along the diagonal subgroup and the K{\"u}nneth map.  This restricts to a $T$-equivariant Pontryagin product 
\begin{align} \label{eq: Thatmg} \hat{m}_{T}: \hat{H}_*^T(\Omega G) \otimes \hat{H}_*^T(\Omega G) \to \hat{H}_*^T(\Omega G)  \end{align} 

As before,  the homology groups are concentrated in even degrees.  Moreover,  it is well-known that \eqref{eq:mG} and \eqref{eq: Thatmg} define (graded-) commutative structures--- this can be seen for example by rewriting $\Omega G$ as a double loop space $\Omega G \cong \Omega^2 BG$ (this is easily checked to be compatible with Pontryagin products and $G$-actions).  There is a natural $T$-equivariant map $j: \Omega G \to LG/T$,  which induces a map: \begin{align*} j_*: \hat{H}_*^{T}(\Omega G) \to \hat{H}_*^{T}(LG/T).  \end{align*}

The map $j_*$ is easily verified to be a homomorphism of $H^*(BT)$ algebras (see  \cite[Lemma 4.3]{Lam}).  Let \begin{align} \label{eq:peterson} \mathcal{P}: \hat{H}_*^{T}(\Omega G) \to H_T^*(M) \\ \alpha_0 \to \mathcal{S}_{j_{*}(\alpha_0)}^{u=0}([M]_T) \nonumber \end{align} 

We will again need a simple algebraic fact: 

\begin{lem} \label{lem:tensorhom} Suppose $i_A: A_0 \hookrightarrow A_1$ and $i_B: B_0 \hookrightarrow B_1$ are embeddings of $\mathbf{k}$-algebras.  Let $F_1: A_0 \to B_0$ and  $F_2: A_1 \to B_1$ be maps of $\mathbf{k}$-modules such that the following diagram commutes:\[
\begin{tikzcd}
A_0  \arrow{r}{F_1} \arrow[swap]{d}{i_A}& B_0 \arrow{d}{i_B} \\
A_1 \arrow{r}{F_2} & B_1
\end{tikzcd}
\]
 If $F_2$ is a ring homomorphism,  then so is $F_1$.  \end{lem}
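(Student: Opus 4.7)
The plan is to prove Lemma \ref{lem:tensorhom} by a direct diagram chase, in essentially the same spirit as the proof of Lemma \ref{l:basic1}. The whole content of the statement is that the ring-theoretic axioms can be ``pulled back'' across injective algebra maps, so the only tools needed are (i) the commuting square, (ii) the fact that $F_2$, $i_A$, $i_B$ all preserve multiplication and unit, and (iii) the injectivity of $i_B$.

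For multiplicativity, I would fix $a_1, a_2 \in A_0$ and compute $i_B(F_1(a_1 a_2))$ along both legs of the relevant square. Using commutativity and that $i_A, F_2$ are ring maps,
\[
i_B(F_1(a_1 a_2)) = F_2(i_A(a_1 a_2)) = F_2(i_A(a_1)) F_2(i_A(a_2)) = i_B(F_1(a_1)) \cdot i_B(F_1(a_2)) = i_B(F_1(a_1) F_1(a_2)).
\]
Since $i_B$ is injective, this gives $F_1(a_1 a_2) = F_1(a_1) F_1(a_2)$. The identical argument applied to the unit $1_{A_0}$ yields $i_B(F_1(1_{A_0})) = F_2(i_A(1_{A_0})) = F_2(1_{A_1}) = 1_{B_1} = i_B(1_{B_0})$, so $F_1(1_{A_0}) = 1_{B_0}$ by injectivity of $i_B$.

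I do not anticipate any real obstacle: the only mild subtlety is to note that an \emph{embedding of $\mathbf{k}$-algebras} means $i_A$ (and $i_B$) is not merely an injective $\mathbf{k}$-linear map but a unital ring homomorphism, which is what licenses the equalities $i_A(a_1 a_2) = i_A(a_1) i_A(a_2)$ and $i_A(1_{A_0}) = 1_{A_1}$ used above. The proof does not use either surjectivity or any compatibility beyond the commuting square, so the lemma will apply verbatim to the intended setting where $A_0 = \hat{H}_*^{T}(\Omega G)$ with its Pontryagin product embeds into an $F_R$-localized analogue on which the shift-operator action is already known to be a ring homomorphism.
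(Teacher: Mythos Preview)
Your proof is correct and follows essentially the same diagram chase as the paper's own argument. In fact you are slightly more careful: you also verify that $F_1$ preserves the unit, which the paper omits (and the paper's proof contains a typo writing $F_2(a_2)$ where $F_1(a_2)$ is meant).
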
 
\begin{proof} 
This again follows from diagram chasing:
\begin{align*}
i_B(F_1(a_1a_2))=&F_2(i_A(a_1a_2))=F_2(i_A(a_1)i_A(a_2))\\
=&F_2(i_A(a_1)) \cdot F_2(i_A(a_2))=i_B(F_1(a_1)) \cdot i_B(F_2(a_2))\\
=&i_B(F_1(a_1) \cdot F_2(a_2)).
\end{align*}
The injectivity of $i_B$ implies that $F_1(a_1a_2)=F_1(a_1) \cdot F_2(a_2)$.
\end{proof}

\begin{lem}\label{l:Peterson}
 Equip  $\hat{H}_*^{T}(\Omega G)$ with the $T$-equivariant Pontryagin product \eqref{eq: Thatmg} and $H_T^*(M)$ with the equivariant quantum product.  The map \eqref{eq:peterson} becomes a ring homomorphism.   \end{lem}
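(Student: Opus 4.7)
The plan is to apply Lemma \ref{lem:tensorhom} by comparing $\mathcal{P}$ to the shift-operator action of the full convolution algebra $\hat{H}_\ast^T(LG/T)$. The necessary inputs are that $j_\ast$ is multiplicative and that the $u=0$ shift operators coming from cycles in $\Omega G$ act on $H^\ast_T(M)[q^{\pm 1}]$ by equivariant quantum multiplication.

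I first verify that $j_\ast\colon \hat{H}_\ast^T(\Omega G) \to \hat{H}_\ast^T(LG/T)$ is a ring homomorphism, from the Pontryagin product to the convolution product. Both products are defined as K\"unneth maps followed by pushforward along pointwise multiplication of loops (compare \eqref{eq: Thatmg} and the construction of \eqref{eq: hatmaff}), and the inclusion $j$ intertwines these pointwise multiplications; naturality of K\"unneth and pushforward then gives $j_\ast(\alpha \cdot \beta) = j_\ast(\alpha) \ast_{fp} j_\ast(\beta)$. Combining this with the module property of Theorem \ref{t:main} specialized at $u=0$ yields
\[
\mathcal{P}(\alpha \cdot \beta)
= \mathcal{S}^{u=0}_{j_\ast(\alpha)}\!\bigl(\mathcal{S}^{u=0}_{j_\ast(\beta)}([M]_T)\bigr)
= \mathcal{S}^{u=0}_{j_\ast(\alpha)}(\mathcal{P}(\beta)),
\]
so the lemma reduces to the identity
\[
\mathcal{S}^{u=0}_{j_\ast(\alpha)}(\gamma) = \mathcal{P}(\alpha) \ast_{QH} \gamma \qquad (\star)
\]
for every $\alpha \in \hat{H}_\ast^T(\Omega G)$ and $\gamma \in H^\ast_T(M)[q^{\pm 1}]$. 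Given $(\star)$, the lemma follows from Lemma \ref{lem:tensorhom} applied with $A_0 = \hat{H}_\ast^T(\Omega G)$, $A_1 = \hat{H}_\ast^T(LG/T)$, $i_A = j_\ast$; $B_0 = H^\ast_T(M)[q^{\pm 1}]$, $B_1 = \operatorname{End}_{\mathbf{k}}(H^\ast_T(M)[q^{\pm 1}])$, $i_B(\gamma) = \gamma \ast_{QH}(-)$ (injective since $[M]_T$ is the quantum unit), $F_2(\alpha) = \mathcal{S}^{u=0}_\alpha$ (a ring homomorphism by the module property), and $F_1 = \mathcal{P}$.

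The main step, and principal obstacle, is $(\star)$. Two observations will drive the argument. First, for a based loop $\gamma_0 \in \Omega G$ one has $\gamma_0(0) = e$, so the two divisors of the Seidel space $X_{\gamma_0}$ over $0, \infty \in \mathbb{P}^1$ are canonically $T$-equivariantly identified with $M$. Second, at $u = 0$ the twist $\mathcal{A}_{\sigma^{-1}}$ at a cocharacter fixed point $\sigma[e]$ acts trivially on $H^\ast(BT)$: by \eqref{eq:actfun} this twist shifts only in the $H^\ast(BS^1)$-direction, which is killed when $u=0$. Consequently $\mathcal{S}^{u=0}_{j_\ast(\alpha)}$ is an honest $H^\ast(BT)$-linear endomorphism of $H^\ast_T(M)[q^{\pm 1}]$, and the parameterized two-pointed section count of Section \ref{ss:parametrizedmoduli} defining it admits a unit/splitting analysis: inserting $[M]_T$ at one marked point collapses that point by the equivariant Gromov--Witten unit axiom, reducing the two-pointed invariant to a one-pointed section count evaluating to $\mathcal{P}(\alpha)$, while pairing the other marked point with $\gamma$ reintroduces quantum multiplication by $\gamma$. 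This is the parameterized, non-abelian analogue of the classical fact that abelian shift operators at $u=0$ act by quantum multiplication by Seidel elements (cf.\ \cite[\S 3.15--3.16]{LJ21} and \cite[\S 3.1]{Iritani}); the essential technical work is carrying out this moduli-space degeneration in a family over a smooth cycle in $\Omega G$, using the trivialization of both divisors afforded by basedness. Everything else is formal given Theorem \ref{t:main}.
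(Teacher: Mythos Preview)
Your route is genuinely different from the paper's, and the gap lies in the identity $(\star)$.

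The paper applies Lemma~\ref{lem:tensorhom} with $A_1 = F_{\bar R}\otimes_{\bar R}\hat H_*^T(\Omega G)$ and $B_1 = F_{\bar R}\otimes_{\bar R} QH_T^*(M)$, where $\bar R = H^*(BT)$ and $F_{\bar R}$ is its fraction field; $F_2$ is simply the $F_{\bar R}$-linear extension of $\mathcal P$. After localizing, $\hat H_*^T(\Omega G)$ acquires a basis indexed by cocharacters $\sigma$, and one only has to check $\mathcal S_{\sigma_1}^{u=0}([M]_T)\ast_{QH}\mathcal S_{\sigma_2}^{u=0}([M]_T)=\mathcal S_{\sigma_1\sigma_2}^{u=0}([M]_T)$, which is the classical abelian Seidel-element identity. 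No parameterized degeneration is needed.

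By contrast, your $(\star)$ asserts that for \emph{every} class $\alpha\in\hat H_*^T(\Omega G)$ the operator $\mathcal S^{u=0}_{j_*(\alpha)}$ coincides with quantum multiplication by $\mathcal P(\alpha)$. This is a parameterized intertwining statement---essentially property~\ref{item:B} of shift operators upgraded to families over cycles in $\Omega G$---and the paper never establishes it. Your sketch (``unit/splitting analysis'', ``moduli-space degeneration in a family'') names the right mechanism, but carrying it out means proving that $\mathcal C_{\bar f}$ is a $QH_T^*(M)$-module map over an arbitrary smooth cycle $\bar f$ in $(\Omega G)_{borel}$, which requires a genuine gluing argument for the parameterized two-pointed invariants. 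That is real additional work beyond anything in \S\ref{s:Seidelmor}--\ref{s:module}. Note also that once you have $(\star)$, your invocation of Lemma~\ref{lem:tensorhom} is superfluous: you already wrote down $\mathcal P(\alpha\cdot\beta)=\mathcal S^{u=0}_{j_*(\alpha)}(\mathcal P(\beta))$, and $(\star)$ turns this directly into $\mathcal P(\alpha)\ast_{QH}\mathcal P(\beta)$.

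The quickest fix is to localize at the point where you need $(\star)$: both sides of $(\star)$ are $\bar R$-linear, so it suffices to verify it after tensoring with $F_{\bar R}$, where it reduces to the cocharacter case---exactly what the paper does. If you want your direct geometric argument to stand on its own, you would need to supply the parameterized module property for $\mathcal C_{\bar f}$ over cycles in $\Omega G$.
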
 
\begin{proof} We set $\bar{R}=H^*(BT)$  and $F_{\bar{R}}$ be its fraction field.   We apply Lemma \ref{lem:tensorhom} with the following choices:  \begin{align*}A_0:= \hat{H}_*^{T}(\Omega G),  A_1:= F_{\bar{R}} \otimes_{\bar{R}} \hat{H}_*^{T}(\Omega G) \\
 B_0:= QH^*_T(M),  B_1:= F_{\bar{R}} \otimes_{\bar{R}} QH^*_T(M) 
 \end{align*} The maps $i_A$,  $i_B$ are the obvious inclusions and the map $F_1$ is the map $\mathcal{P}$ defined in \eqref{eq:peterson}.  Note that it follows from the fact that $j_*$ is $\bar{R}$-linear that $\mathcal{P}$ is $\bar{R}$-linear.  We set $F_2$ to be the $F_{\bar{R}}$-linear extension of $\mathcal{P}$. 

Having made these choices,  it suffices to prove that $F_2$ is a ring homomorphism.  $F_{\bar{R}} \otimes_{\bar{R}} \hat{H}_*^{T}(\Omega G)$ has a basis given by fixed points of the $T$-action on $\Omega G$,  which are given by co-characters $\sigma \in \mathcal{X}(T)$ viewed as loops in $\Omega G$.  Pushing this forward along $j$ simply sends this to same loop $\sigma$,  now viewed as lying in $LG/T$ and the element $\mathcal{S}_{\sigma}^{u=0}([M]_T)$ is the equivariant Seidel element \cite{LJ21} corresponding to this co-character.  It is well-known c.f.  \emph{loc. cit} that for any two co-characters,  $\sigma_1, \sigma_2$,  \begin{align*}  \mathcal{S}_{\sigma_1}^{u=0}([M]_T) \ast_{QH} \mathcal{S}_{\sigma_2}^{u=0}([M]_T)=  \mathcal{S}_{\sigma_1 \cdot \sigma_2}^{u=0}([M]_T).  \end{align*}     \end{proof}

\section{The Lagrangian $\mathbb{L}_G(M)$}\label{section: other}

Throughout this section,  we take our ground field $\mathbf{k}$ to a field of characteristic zero.  If we view $\Omega G$ as $LG/G$,  the convolution construction from \S \ref{sec:conv} gives rise to an associative algebra structure which was heavily studied in \cite{BFM}: 
\begin{align} \label{eq: hatmg} \hat{m}: \hat{H}_*^{\hat{G}}(\Omega G) \otimes \hat{H}_*^{\hat{G}}(\Omega G) \to \hat{H}_*^{\hat{G}}(\Omega G)  \end{align} 

To connect this story with the main story in this paper,  we need the following observation: 

\begin{thm}[\cite{KK1,KK2,Ku}, see also Theorem 1.4.1 of \cite{Ginzburg18}] \label{ref:Ginzburg}
 There is an embedding of algebras $\hat{H}_*^{\hat{G}}(\Omega G) \hookrightarrow \hat{H}_*^{\hat{T}}(LG/T)$.
It identifies $\hat{H}_*^{\hat{G}}(\Omega G)$ as the spherical subalgebra of $\hat{H}_*^{\hat{T}}(LG/T)$
with respect to the symmetrizer idempotent $\mathbf{e}=\frac{1}{|W|}\sum_{w \in W}w \in \mathbf{k}[W]$. \end{thm}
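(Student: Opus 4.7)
The plan is to construct the embedding geometrically from the natural projection $\pi: LG/T \to LG/G = \Omega G$ together with the inclusion $\hat{T} \subset \hat{G}$, and then identify its image with the spherical subalgebra by exploiting the right $W$-action on $LG/T$ coming from $W = N_G(T)/T$.

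In more detail, I would build the embedding as the composite of the restriction-to-subgroup map $\hat{H}_*^{\hat{G}}(\Omega G) \to \hat{H}_*^{\hat{T}}(\Omega G)$ discussed in Section \ref{section: BMH} with a Gysin wrong-way map $\pi^{!}: \hat{H}_*^{\hat{T}}(\Omega G) \to \hat{H}_{*+\dim(G/T)}^{\hat{T}}(LG/T)$. Since $\pi$ is a smooth proper fibration with even-dimensional orientable fiber $G/T$, $\pi^{!}$ is defined at each finite-dimensional approximation by Poincar\'e--Lefschetz duality and passes to the semi-infinite limit. Compatibility with the convolution products \eqref{eq: hatmaff} and \eqref{eq: hatmg} then follows from the commutative square
\begin{equation*}
\begin{tikzcd}
LG \times_T (LG/T) \ar[r, "m_{LG}"] \ar[d] & LG/T \ar[d, "\pi"]\\
LG \times_G \Omega G \ar[r, "m_{\Omega G}"] & \Omega G
\end{tikzcd}
\end{equation*}
combined with the naturality of the K\"unneth map from Lemma \ref{lem:kunneth} and the projection formula for $\pi^{!}$.

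For the spherical identification, I would use the right $W$-action on $LG/T$. Weyl elements $[w] \in W \subset \tilde{W}$ give fixed points in $LG/T$ whose equivariant fundamental classes act on $\hat{H}_*^{\hat{T}}(LG/T)$ via convolution as in \eqref{eq:multipbasis}, realizing both the left and right $W$-actions inside the algebra itself. Since $\pi$ is constant on the right $W$-orbits and $W$-invariant on $\Omega G$ through the conjugation action, the image of the embedding is contained in the bi-invariants $\mathbf{e}\,\hat{H}_*^{\hat{T}}(LG/T)\,\mathbf{e}$. Both injectivity and surjectivity onto this subspace can be verified after localizing at $F_R$, where the target acquires the $\tilde{W}$-indexed basis \eqref{eq:alternativebasis} and the source acquires an analogous $\mathcal{X}(T)$-indexed basis indexed by $T$-fixed points of $\Omega G$; the map becomes an explicit Weyl symmetrization from which both claims can be read off.

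The main obstacle is making the Gysin map $\pi^{!}$ precise in the semi-infinite setting, since $LG/T$ and $\Omega G$ are infinite-dimensional ind-spaces and one cannot invoke finite-dimensional Poincar\'e duality directly. One must pass through the approximation systems implicit in Lemma \ref{lem: freeactions} (together with the cell-by-cell direct limit used in Section \ref{section: BMH}), or alternatively translate the entire argument into the Kostant--Kumar polynomial model, which is the route taken in \cite{KK1,KK2,Ku} and reviewed in \cite{Ginzburg18}; since the statement is a well-established theorem from those works, we invoke it rather than reproducing the full proof.
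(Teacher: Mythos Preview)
The paper does not prove this theorem; it is stated as a result from the literature \cite{KK1,KK2,Ku,Ginzburg18} and then used as a black box in the proof of Corollary \ref{c:LagrangianSupport}. Your proposal ultimately does the same thing in its final paragraph, so in that sense the two agree.

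Your preceding sketch goes beyond what the paper offers, and the overall strategy (restrict from $\hat{G}$ to $\hat{T}$, pull back along the $G/T$-fibration $LG/T \to \Omega G$, then identify the image using the right $W$-action and fixed-point localization) is the right shape. One point to be careful about if you ever flesh it out: the Gysin map $\pi^{!}$ shifts degree by $\dim(G/T)$, so the composite you write down is not literally a degree-preserving algebra embedding on the nose. The correct statement is that the image is $\mathbf{e}\,\hat{H}_*^{\hat{T}}(LG/T)\,\mathbf{e}$, which is a (non-unital) subalgebra of $\hat{H}_*^{\hat{T}}(LG/T)$ with its own unit $\mathbf{e}$; the isomorphism $\hat{H}_*^{\hat{G}}(\Omega G) \cong \mathbf{e}\,\hat{H}_*^{\hat{T}}(LG/T)\,\mathbf{e}$ absorbs this shift (essentially because $\mathbf{e}$ itself, built from the classes $[w]$, already encodes the pushforward along $G/T \to pt$). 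This is exactly how the Kostant--Kumar polynomial model handles it, which is why your last paragraph is the honest resolution.
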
 

The other basic property of the algebra $\hat{H}_*^{\hat{G}}(\Omega G)$ that we will need is that it is a flat deformation of the Pontryagin ring $\eqref{eq: Ghatmg}.$

\begin{thm}[\cite{BFM}]  Identify $H^*(BS^1) \cong \mathbf{k}[u].$ \begin{itemize} \item The homology $\hat{H}_*^{G}(\Omega G)$ equipped with the product \eqref{eq: hatmg} is concentrated in even degrees and is strictly commutative.  \item There is an additive identification of $H^*(BS^1)$ modules \begin{align} \label{eq:additivelytrivial} \hat{H}_*^{\hat{G}}(\Omega G) \cong \hat{H}_*^{G}(\Omega G)[u]. \end{align} \item The  $u=0$ limit  $\hat{H}_*^{\hat{G}}(\Omega G) \otimes_ {\mathbf{k}[u]}\mathbf{k}$ is isomorphic to the Pontryagin ring \eqref{eq: Ghatmg}.   \end{itemize} 
 \end{thm}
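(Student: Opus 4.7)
The plan is to prove the three assertions in sequence, reducing each to structural facts about $\Omega G$ together with parity arguments that collapse the relevant Serre spectral sequences.

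For the first assertion, the starting point is the affine Schubert decomposition of $\Omega G \simeq \Omega_{poly} G$: identifying $\Omega G$ with the affine Grassmannian $Gr = G_\mathbb{C}((t))/G_\mathbb{C}[[t]]$, its $G_\mathbb{C}[[t]]$-orbits are complex-algebraic and hence even-real-dimensional, so $H_*(\Omega G;\mathbf{k})$ is even-concentrated. The $G$-action on $\Omega G$ by conjugation is induced from left-multiplication on $Gr$, which preserves this stratification; the Serre spectral sequence for the Borel fibration $\Omega G \to (\Omega G)_G \to BG$ then has $E_2$-page $H^*(BG;\mathbf{k}) \otimes H_*(\Omega G)$ of even total degree and degenerates for parity reasons, giving even concentration for $\hat{H}_*^G(\Omega G)$. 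Strict commutativity of the Pontryagin product then reduces to graded-commutativity, which is automatic from the double-loop-space structure on $\Omega G \simeq \Omega^2 BG$ (valid since $BG$ is simply connected).

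For the second assertion, I would run the same argument on the fibration $(\Omega G)_G \to (\Omega G)_{\hat{G}} \to BS^1$, whose $E_2$-page $H^*(BS^1;\mathbf{k}) \otimes \hat{H}_*^G(\Omega G) \cong \mathbf{k}[u] \otimes \hat{H}_*^G(\Omega G)$ is again concentrated in even total degree by the first part, forcing degeneration and providing the additive identification of $\mathbf{k}[u]$-modules \eqref{eq:additivelytrivial}. For the third assertion, I would trace through the construction of $\hat{m}$ in \eqref{eq: hatmg}, which is the composition of the K\"unneth map for the $\hat{G}\times\hat{G}$-action, restriction to the diagonal $\hat{G}$, and pushforward along $m_{\Omega G}$. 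Each of these steps commutes with the further restriction of subgroup from $\hat{G}$ to $G$; under the identifications above this restriction corresponds exactly to the quotient $\mathbf{k}[u] \to \mathbf{k}$, so the specialization of $\hat{m}$ at $u=0$ reproduces the analogous $G$-equivariant convolution, which is by construction the Pontryagin product \eqref{eq: Ghatmg}.

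The hard part will be verifying the Serre spectral sequence arguments rigorously in light of the inverse-limit structure implicit in the definition of $\hat{H}_*^K$ from Section \ref{section: BMH}. Concretely, one must check that the local systems on $BG$ and $BS^1$ are trivial (which follows from connectedness and from cellular compatibility of the actions) and that the parity-based degeneration survives passage from finite-dimensional approximations to the full inverse limit. Once these technical points are handled, the entire theorem is essentially a formal consequence of parity, matching the more conceptual framework of \cite{BFM}.
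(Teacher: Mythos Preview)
Your arguments for the first two bullets are essentially those of the paper: even concentration via the Schubert/Bruhat cells and a Serre spectral sequence, commutativity via the double loop space structure, and the additive splitting over $\mathbf{k}[u]$ by a second parity-degeneration argument. That part is fine.

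The gap is in the third bullet. You describe the convolution $\hat{m}$ of \eqref{eq: hatmg} as ``K\"unneth for the $\hat{G}\times\hat{G}$-action, restrict to the diagonal $\hat{G}$, push forward along $m_{\Omega G}$.'' But that is not how $\hat{m}$ is built: the construction in \S\ref{sec:conv} (with $G$ in place of $T$) passes through the \emph{twisted} product $LG \times_G (LG/G)$ and the multiplication map $LG \times_G \Omega G \to \Omega G$, not through $\Omega G \times \Omega G$ and $m_{\Omega G}$. In particular $m_{\Omega G}$ is \emph{not} $S^1$-equivariant for the diagonal loop-rotation action (this is exactly the warning in the Remark following the proof), so the product you describe does not even make sense $\hat{G}$-equivariantly. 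Consequently your ``each step commutes with restriction from $\hat{G}$ to $G$, hence the $u=0$ limit is the Pontryagin product by construction'' is circular: the $G$-equivariant convolution you land on still lives on $LG \times_G \Omega G$, and identifying it with the Pontryagin product is precisely the content of the third bullet.

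What the paper actually uses is the single geometric observation that there is a $G$-equivariant identification $LG \times_G (LG/G) \cong \Omega G \times \Omega G$ under which the convolution multiplication map becomes $m_{\Omega G}$. Once you insert this identification (and note, as in the Remark, that it does not respect the $S^1$-actions), your restriction argument goes through and the third bullet follows.
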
 

\begin{proof} (Sketch) These results are all proven in \cite{BFM},  but because they are elementary we indicate how they are proven.  As noted just below \eqref{eq: Thatmg},  the first bullet point is classical.  The additive identification in the second bullet follows from the fact that $\hat{H}_*^{G}(\Omega G)$ is concentrated in even degree together with standard spectral sequence arguments.  The claim about the $u=0$ limit amounts to the fact that we have a $G$-equivariant identification $LG \times_G (LG/G) \cong \Omega G \times \Omega G. $ \end{proof}

 \begin{rem} It is worth noting one potential point of confusion --- the induced $S^1$-action on $\Omega G \times \Omega G$ coming from identifying $LG \times_G (LG/G) \cong \Omega G \times \Omega G$ is \emph{not} the diagonal $S^1$-action.  (The map \eqref{eq:mG} is not equivariant with respect to this diagonal $S^1$-action. ) \end{rem}

We will need to make use of a celebrated result of Gabber
concerning modules over quantized algebras.   In the
discussion below,  let $A$ be an algebra over
$\operatorname{D}= \mathbf{k}[u]/(u^2)$ which is free as a module over $\operatorname{D}$.  Suppose that $A/uA=A_0$ is a smooth,  finitely generated \emph{commutative} algebra over a field $\mathbf{k}$.  It is well-known that $A_0$ comes equipped with a Poisson bracket defined as follows.  Given any two elements $a_0,b_0 \in A_0$ take lifts $a,b$ to $A$ and form the commutator $[a,b] \in A/uA \cong A_0$.  It is easy to see that this is well-defined and satisfies the Leibnitz identity.  It is worth mentioning that in general,  these Poisson brackets satisfy the Leibnitz identity but not necessarily the Jacobi identity.  However,  the Poisson bracket will also satisfy the Jacobi identity if the deformation can be extended modulo $u^3$.  

\begin{defn} Let $Z \subset Y$ be a reduced subvariety and let $I_Z \subset A_0$ be the corresponding ideal sheaf.  The subvariety $Z$ is said to be co-isotropic if $\lbrace I_Z, I_Z \rbrace \subset I_Z.$ \end{defn} 

If the Poisson structure is dual to a symplectic structure,  then $Z \subset Spec(A_0)$ being co-isotropic is equivalent to it being co-isotropic in the usual sense along $Z_{reg}$.  In this case,  we say that a subvariety $Z$ is Lagrangian if it is co-isotropic and of minimal dimension $\operatorname{dim}(Y)/2$. 

 \begin{thm}[\cite{Gabber}, see also Theorem 1.2.8 of \cite{GinzburgLecture}] \label{thm: Gabber} 
 Let $M$ be a finitely generated module over $A$,  which is also free over $\operatorname{D}.$  Finally set $I_M=\sqrt{Ann_{A_{0}}(M_0)}$  to be the radical of the annhilator ideal $Ann_{A_{0}}(M_0).$   Then $$\lbrace I_M,I_M \rbrace \subset I_M.$$ In other words,  the (reduced) support of $M_0$ over $A_0$ is co-isotropic.  
 \end{thm}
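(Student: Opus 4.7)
The plan is to exploit the non-commutativity of $A$ via a commutator calculation that uses $u^2 = 0$ in $D$ crucially, and then to upgrade from the annihilator to its radical by invoking Seidenberg's theorem on differential ideals in characteristic zero.

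First I would introduce the auxiliary left ideal
\[ J := \{a \in A : aM \subset uM\}. \]
One checks directly that $J$ is in fact two-sided (for $x \in A$ and $a \in J$, $xaM \subset x \cdot uM = u(xM) \subset uM$), contains $uA$, and that the quotient $J/uA$ is canonically identified with $\mathrm{Ann}_{A_0}(M_0)$. The central observation then exploits $u^2 = 0$: for any $a, b \in J$,
\[ ab \cdot M \subset a(uM) = u(aM) \subset u(uM) = 0 \]
and symmetrically $ba \cdot M = 0$, hence $[a,b] \cdot M = 0$. On the other hand, commutativity of $A/uA$ forces $[a,b] \in uA$, so write $[a,b] = uc$ for some $c \in A$; the equality $ucM = 0$ combined with freeness of $M$ over $D = \mathbf{k}[u]/(u^2)$ (which gives $\ker(u : M \to M) = uM$) forces $cM \subset uM$, i.e., $c \in J$. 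Since the Poisson bracket on $A_0$ is defined precisely by $\{\bar a, \bar b\} = \bar c$, we conclude $\{\bar a, \bar b\} \in J/uA = \mathrm{Ann}_{A_0}(M_0)$, establishing the ``self-Poisson'' relation
\[ \{\mathrm{Ann}_{A_0}(M_0), \mathrm{Ann}_{A_0}(M_0)\} \subset \mathrm{Ann}_{A_0}(M_0). \]

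To upgrade this to the desired statement $\{I_M, I_M\} \subset I_M$, I would invoke Seidenberg's theorem: over a Noetherian ring of characteristic zero, any derivation preserving an ideal $I$ also preserves $\sqrt{I}$. For each $a \in \mathrm{Ann}$, the Hamiltonian derivation $\delta_a = \{a, -\}$ preserves $\mathrm{Ann}$ by the previous step, hence preserves $I_M = \sqrt{\mathrm{Ann}}$, giving $\{\mathrm{Ann}, I_M\} \subset I_M$. The main obstacle is promoting this further to $\{I_M, I_M\} \subset I_M$: the purely formal implication ``$\{I,I\} \subset I \Rightarrow \{\sqrt{I}, \sqrt{I}\} \subset \sqrt{I}$'' is false for a general self-Poisson ideal (for instance $I = (x^2, y^2, xy) \subset \mathbf{k}[x,y]$ with $\{x,y\}=1$ has radical $(x,y)$, yet $\{x,y\} = 1 \notin (x,y)$), so the proof must genuinely use that $\mathrm{Ann}$ arises as the annihilator of an honest module $M$ over $A$, not merely the formal property of the ideal. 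I would handle this via primary decomposition $\mathrm{Ann} = \bigcap Q_i$ with minimal primes $P_i = \sqrt{Q_i}$, so that $I_M = \bigcap P_i$, combined with the refined form of Seidenberg (minimal primes over a derivation-stable ideal are themselves stable). Then, by localizing the full deformation picture at each $P_i$ and reapplying the commutator calculation of the previous paragraph to the localized module $(M)_{P_i}$ over $A_{P_i}$ --- where the reduced support is now the closed point and the annihilator is $P_i(A_0)_{P_i}$-primary --- one shows each $P_i$ is itself a Poisson prime ideal, and concludes that $I_M$, as the intersection of Poisson primes, is Poisson.
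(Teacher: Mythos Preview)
The paper does not supply a proof of this statement --- it is quoted as a black box from \cite{Gabber} and Ginzburg's lectures --- so there is no in-paper argument to compare against. I will therefore just assess your proposal on its own merits.

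Your first two paragraphs are correct and are the standard opening: the commutator computation over $\operatorname{D}=\mathbf{k}[u]/(u^2)$ cleanly yields $\{\mathrm{Ann}_{A_0}(M_0),\mathrm{Ann}_{A_0}(M_0)\}\subset\mathrm{Ann}_{A_0}(M_0)$, and one application of Seidenberg (char~$0$) then gives $\{\mathrm{Ann}_{A_0}(M_0),I_M\}\subset I_M$. (Minor point: you verified $J$ is closed under left multiplication; you should also note $(ax)M\subset aM\subset uM$, which gives the right-ideal property.)

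The genuine gap is in the last step. Localizing at a minimal prime $P_i$ does not reduce the difficulty: after localization the annihilator of $(M_0)_{P_i}$ is the $P_i$-primary ideal $(\mathrm{Ann})_{P_i}$ with radical the maximal ideal $(P_i)_{P_i}$, and re-running the commutator calculation only reproduces $\{(\mathrm{Ann})_{P_i},(\mathrm{Ann})_{P_i}\}\subset(\mathrm{Ann})_{P_i}$ --- exactly the same shape of problem you started with. ``Refined Seidenberg'' tells you each minimal prime $P_i$ is preserved by the derivations $\{a,-\}$ for $a\in\mathrm{Ann}$, not for $a\in P_i$, so it does not give $\{P_i,P_i\}\subset P_i$ either. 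Indeed, your own counterexample $I=(x^2,y^2,xy)\subset\mathbf{k}[x,y]$ is already $(x,y)$-primary in the local ring $\mathbf{k}[x,y]_{(x,y)}$, so localization alone cannot be the mechanism that separates it from a bona fide annihilator ideal.

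Gabber's actual argument does not run purely at the level of $\mathbf{k}[u]/(u^2)$: it uses the full filtered structure (equivalently, the deformation to all orders in $u$) in an essential inductive way. In the paper's application this extra data is present --- $\hat{H}_*^{S^1\times G}(\Omega G)$ is flat over all of $\mathbf{k}[u]$, the bracket is symplectic and satisfies Jacobi --- so the classical Gabber theorem applies directly. But your sketch, working only modulo $u^2$, does not supply the missing idea needed to pass from the primary annihilator to its radical.
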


For the present paper,  the key example will be the following:\footnote{In \cite{BFM},  the authors assume semi-simplicity.  However this is not needed,  see e.g. \cite[Appendix A]{BFN}.}

\begin{thm}[\cite{BFM}] We have
\begin{itemize} 
\item The spectrum $\operatorname{Spec}(\hat{H}_*^{G}(\Omega G,\mathbb{C}))$ is a smooth holomorphic symplectic manifold.   \item The infinitesimal deformation given by restricting \eqref{eq:additivelytrivial} modulo $u^2$ is induced by a Poisson structure dual to this symplectic structure.   \end{itemize}  \end{thm}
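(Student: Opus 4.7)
The plan is to derive both bullets from the BFM isomorphism together with its geometric interpretation. For the first bullet, I would invoke the identification $\operatorname{Spec}(\hat{H}_*^G(\Omega G, \mathbb{C})) \cong BFM(G^\vee_\mathbb{C})$ of \cite[Thm.~2.12]{BFM} as affine schemes, and then appeal to the alternative description of $BFM(G^\vee_\mathbb{C})$ as a moduli of solutions to Nahm's equations for $G^\vee$ on $[0,1]$ with appropriate boundary conditions, as in \cite[Appendix A]{BFN}. This moduli space is hyperk\"ahler, hence smooth, and one of its complex structures provides the required holomorphic symplectic form. As a sanity check one can also argue directly from the universal-centralizer description: the projection $BFM(G^\vee_\mathbb{C}) \to (\text{Kostant slice})$ has constant fiber dimension $\operatorname{rk}(G)$ because Kostant-slice elements are regular, so it is smooth, and the symplectic form arises from viewing $BFM$ as an open piece of the group scheme of regular centralizers equipped with its canonical $2$-form.

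For the second bullet, the general recipe recalled just before Theorem \ref{thm: Gabber} already produces a Poisson bracket on $\hat{H}_*^G(\Omega G, \mathbb{C})$ out of the quantization $\hat{H}_*^{\hat G}(\Omega G)$: lift $a_0,b_0$ to $a,b$, and set $\{a_0,b_0\} := [a,b]/u \bmod u$, using the additive splitting \eqref{eq:additivelytrivial}. The point is to match this bracket with the Poisson structure dual to the symplectic form. My plan is to carry out the comparison by localization. Using Theorem \ref{ref:Ginzburg} together with the embedding $\hat{H}_*^{\hat T}(LG/T) \hookrightarrow (\mathcal{N},*_{fp})$ from Section \ref{sec:conv}, the commutator can be computed fiber-wise via \eqref{eq:multipbasis}: for $f \in R$ one has $[e_{\tilde w}, f] = (\mathcal{A}_{\tilde w}(f)-f)\,e_{\tilde w}$, which is divisible by $u$ and whose leading term is precisely the derivative of $f$ along the co-character direction encoded by $\tilde w$. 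Matching this with the symplectic pairing on $BFM(G^\vee_\mathbb{C})$ between tangent vectors along the Kostant slice and tangent vectors along the fiber (the regular centralizer) is then a character-lattice computation.

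The main obstacle will be this final matching of the Poisson brackets: to do it from scratch, one must trace through the BFM isomorphism itself, which relies on identifying both sides with functions on the universal centralizer using the integrable system associated to the Toda lattice and a careful computation in the equivariant homology of the affine Grassmannian. This is the hard geometric-representation-theoretic content of \cite{BFM}, and in practice I would simply invoke it rather than redoing the identification. Once the two brackets are known to agree on the generators coming from the $R$-subalgebra and the fixed-point basis $\{e_{\tilde w}\}$, the rest is formal since both brackets are determined by their values on a generating set by the Leibniz rule.
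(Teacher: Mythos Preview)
The paper does not give a proof of this theorem: it is stated as a citation of \cite{BFM} and left without argument (unlike the preceding \cite{BFM}-cited theorem, which does carry a short proof sketch). So there is no ``paper's own proof'' to compare your proposal against.

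That said, your outline is a reasonable gloss on how the statement is established in \cite{BFM}. A few comments. For the first bullet, invoking the Nahm-moduli/hyperk\"ahler description from \cite[Appendix~A]{BFN} is a clean way to get smoothness and the holomorphic symplectic form, and is exactly the route the present paper alludes to in the introduction. For the second bullet, your localization computation via \eqref{eq:multipbasis} correctly identifies the first-order commutator, but be aware that this computation lives in $\hat{H}_*^{\hat{T}}(LG/T)$, not in the spherical subalgebra $\hat{H}_*^{\hat{G}}(\Omega G)$; passing between them requires the identification of Theorem~\ref{ref:Ginzburg} and some care with the idempotent $\mathbf{e}$, since the fixed-point basis $\{e_{\tilde w}\}$ is not itself $W$-invariant. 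You are right that the actual matching of this bracket with the symplectic form on $BFM(G^\vee_\mathbb{C})$ is the substantive content of \cite{BFM} and is best invoked rather than reproved.
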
 

We are now in a position to expand upon \cite[Remark 2.3]{Teleman1}: 

\begin{proof}[Proof of Corollary \ref{c:LagrangianSupport}]
The quantum cohomology $QH^*_T(M)$ becomes a module over $\hat{H}_*^{\hat{G}}(\Omega G)$ by Theorem \ref{ref:Ginzburg}.  This module structure preserves $QH^*_G(M)=QH^*_T(M)^{W}$ because $\hat{H}_*^{\hat{G}}(\Omega G)$ is the spherical subalgebra with respect to the symmetrizer idempotent. The module $QH^*_G(M,\mathbb{C})$ has co-isotropic support $\mathbb{L}_G(M)$ over $\operatorname{Spec}(\hat{H}_*^{G}(\Omega G,\mathbb{C}))$ by Theorem \ref{thm: Gabber}.  The support is Lagrangian because $QH^*_G(M)$ is a finite module over $H^*(BG)$
and hence $\dim( \operatorname{Spec}(QH^*_G(M)))=\dim( \operatorname{Spec}(H^*(BG))) =\frac{1}{2} \dim( \operatorname{Spec}(\hat{H}_*^{G}(\Omega G)) )$. \end{proof}

We close the paper with a couple of calculations from the literature which illustrate this corollary.  \vskip 10 pt

\emph{Toric varieties:}  Let $M^{2n}$ be a compact,  monotone toric manifold.  In this case,  $$BFM(G_{\mathbb{C}}^{\vee}) \cong T^*T^{\vee}_\mathbb{C}.$$  We view this as $\operatorname{Spec}(\mathbb{C}[h_1,\cdots, h_n,  z_1^{\pm 1},\cdots,z_n^{\pm 1}])$ with the holomorphic symplectic form $\sum_{i=1}^n dh_i \wedge \frac{dz_i}{z_i}$ where the coordinates $h_i$ correspond to the the equivariant variables and the $z_i$ correspond to the Seidel operators.  Let $$W_{HV}(z_1,\cdots,z_n):(\mathbb{C}^*)^n \to \mathbb{C} $$ denote the Givental-Hori-Vafa superpotential (see \cite[Theorem 3]{MR1403947}) and consider its equivarant version $$W_{HV}^{eq}:= W_{HV}+\sum_i h_i\operatorname{log}(z_i).$$ Then it follows from \cite{LJ21, MR3651574} that  there is an isomorphism of rings:  $$ QH_T^*(M) \cong \operatorname{Jac}_{/\mathbf{k}[h_i]}(W_{HV}^{eq}).$$  Here the notation  $\operatorname{Jac}_{/\mathbf{k}[h_i]}(W_{HV}^{eq})$ denotes the relative Jacobian ring of $W_{HV}^{eq}$,  meaning we only take partial derivatives with respect to the $z_i$ variables.  Writing this out explicitly,  this is the subvariety of $\operatorname{Spec}(\mathbb{C}[h_1,\cdots, h_n,  z_1^{\pm 1},\cdots,z_n^{\pm 1}])$  defined by the equations \begin{align} -z_i\frac{\partial W_{HV}}{\partial z_i}= h_i \end{align} which is Lagrangian because it is  essentially the graph of the differential of $W_{HV}$ written out in coordinates.   \vskip 10 pt

\emph{Flag varieties:} 
(compare \cite[\S 6.2]{Teleman2})
This example is strictly speaking conjectural.  Suppose $M=G/T$ is a full flag variety.  The description of the Lagrangian in this case involves an alternative realization of $BFM(G_{\mathbb{C}}^{\vee})$ as a (bi-Whittaker) Hamiltonian reduction of $T^*G_{\mathbb{C}}^{\vee}$.  Let $\rho: \mathfrak{sl}(2,\mathbb{C}) \to \mathfrak{g}^{\vee}_\mathbb{C}$ be a Lie algebra homomorphism such that $y$ (the image under $\rho$ of the standard lower triangular generator) is a principal nilpotent element in $\mathfrak{g}_{\mathbb{C}}^{\vee}.$  Let $N^{\vee}_{\mathbb{C}}$ be the unipotent subgroup of $G_{\mathbb{C}}^{\vee}$ whose Lie algebra $\mathfrak{n}^{\vee}_{\mathbb{C}}$
is the sum of negative eigenspaces of $h$ (the image under $\rho$ of the standard diagonal generator).  View $y$ as lying in $\mathfrak{n}^{\vee,\ast}$ via an invariant pairing on $\mathfrak{g}_{\mathbb{C}}^{\vee}.$ Let $\mu$ be the moment map for the $N^{\vee} \times N^{\vee}$ action on $T^*G_{\mathbb{C}}^{\vee}.$ Then we have an isomorphism: \begin{align} BFM(G_{\mathbb{C}}^{\vee}) \cong \mu^{-1}(y)/N^{\vee} \times N^{\vee}  \end{align}

If $w_0$ is a longest word in the Weyl group,  the inclusion of $N^{\vee} \times w_0 T_\mathbb{C}^{\vee} \times N^{\vee} \subset G_\mathbb{C}^{\vee}$ leads to a (symplectic) embedding of $T^*T_\mathbb{C}^{\vee} \subset BFM(G_{\mathbb{C}}^{\vee}).$ Then we expect that $\mathbb{L}_G(G/T)$ is given by a cotangent fiber in $T^*T_\mathbb{C}^{\vee}.$ In fact,  under the identification of \cite[Theorem 2.12]{BFM},  the induced projection $T^*T_\mathbb{C}^{\vee} \to \operatorname{Spec}(H^*(BG))$ becomes identified with the Toda integrable system.  The classical calculations of \cite{GiventalKim93,  Kim} describe the equivariant quantum cohomology $QH_G^*(G/T)$ (as an $H^*(BG)$ module) precisely as a cotangent fiber in this Toda integrable system.  To complete this example,  it would suffice to show that these calculations are compatible with our geometric description of the module structure over $BFM(G_{\mathbb{C}}^{\vee}).$ While such comparisons would take us too far afield,  we expect that this could be verified using the methods of \cite{ChiHong3}.

\def\cprime{$'$}
\begin{bibdiv}
\begin{biblist}

\bib{Atiyah-Bott}{article}{
      author={Atiyah, M.~F.},
      author={Bott, R.},
       title={The moment map and equivariant cohomology},
        date={1984},
        ISSN={0040-9383},
     journal={Topology},
      volume={23},
      number={1},
       pages={1\ndash 28},
  url={https://doi-org.ezproxy.is.ed.ac.uk/10.1016/0040-9383(84)90021-1},
      review={\MR{721448}},
}

\bib{MR3267019}{article}{
      author={Allday, Christopher},
      author={Franz, Matthias},
      author={Puppe, Volker},
       title={Equivariant cohomology, syzygies and orbit structure},
        date={2014},
        ISSN={0002-9947},
     journal={Trans. Amer. Math. Soc.},
      volume={366},
      number={12},
       pages={6567\ndash 6589},
         url={https://doi.org/10.1090/S0002-9947-2014-06165-5},
      review={\MR{3267019}},
}

\bib{MR3190596}{article}{
      author={Allday, Christopher},
      author={Franz, Matthias},
      author={Puppe, Volker},
       title={Equivariant {P}oincar\'{e}-{A}lexander-{L}efschetz duality and
  the {C}ohen-{M}acaulay property},
        date={2014},
        ISSN={1472-2747},
     journal={Algebr. Geom. Topol.},
      volume={14},
      number={3},
       pages={1339\ndash 1375},
         url={https://doi.org/10.2140/agt.2014.14.1339},
      review={\MR{3190596}},
}

\bib{Atiyah-Pressley}{incollection}{
      author={Atiyah, M.~F.},
      author={Pressley, A.~N.},
       title={Convexity and loop groups},
        date={1983},
   booktitle={Arithmetic and geometry, {V}ol. {II}},
      series={Progr. Math.},
      volume={36},
   publisher={Birkh\"{a}user Boston, Boston, MA},
       pages={33\ndash 63},
      review={\MR{717605}},
}

\bib{Bredon72}{book}{
      author={Bredon, Glen~E.},
       title={Introduction to compact transformation groups},
      series={Pure and Applied Mathematics, Vol. 46},
   publisher={Academic Press, New York-London},
        date={1972},
      review={\MR{0413144}},
}

\bib{MR1786481}{incollection}{
      author={Brion, Michel},
       title={Poincar\'{e} duality and equivariant (co)homology},
        date={2000},
      volume={48},
       pages={77\ndash 92},
         url={https://doi.org/10.1307/mmj/1030132709},
        note={Dedicated to William Fulton on the occasion of his 60th
  birthday},
      review={\MR{1786481}},
}

\bib{ChiHong1}{article}{
      author={Bae, Hanwool},
      author={Chow, Chi~Hong},
      author={Leung, Naichung~Conan},
       title={Applications of the theory of floer to symmetric spaces},
        date={2021},
     journal={arxiv:2103.00382},
}

\bib{BFM}{article}{
      author={Bezrukavnikov, Roman},
      author={Finkelberg, Michael},
      author={Mirkovi\'{c}, Ivan},
       title={Equivariant homology and {$K$}-theory of affine {G}rassmannians
  and {T}oda lattices},
        date={2005},
        ISSN={0010-437X},
     journal={Compos. Math.},
      volume={141},
      number={3},
       pages={746\ndash 768},
         url={https://doi.org/10.1112/S0010437X04001228},
      review={\MR{2135527}},
}

\bib{BFN}{article}{
    AUTHOR = {Braverman, Alexander}, 
AUTHOR = {Finkelberg, Michael},  
AUTHOR = {Nakajima,  Hiraku},
     TITLE = {Towards a mathematical definition of {C}oulomb branches of
              3-dimensional N=4 gauge theories, {II}},
   JOURNAL = {Adv. Theor. Math. Phys.},
  FJOURNAL = {Advances in Theoretical and Mathematical Physics},
    VOLUME = {22},
      YEAR = {2018},
    NUMBER = {5},
     PAGES = {1071--1147},
      ISSN = {1095-0761},
   MRCLASS = {57R57 (14J33 14N35 16G20 17B67 81T13)},
  MRNUMBER = {3952347},
MRREVIEWER = {Dave Auckly},
       DOI = {10.4310/ATMP.2018.v22.n5.a1},
       URL = {https://doi.org/10.4310/ATMP.2018.v22.n5.a1},
}

\bib{BMO}{article}{
      author={Braverman, Alexander},
      author={Maulik, Davesh},
      author={Okounkov, Andrei},
       title={Quantum cohomology of the {S}pringer resolution},
        date={2011},
        ISSN={0001-8708},
     journal={Adv. Math.},
      volume={227},
      number={1},
       pages={421\ndash 458},
         url={https://doi-org.ezproxy.is.ed.ac.uk/10.1016/j.aim.2011.01.021},
      review={\MR{2782198}},
}

\bib{BDGH}{article}{
    AUTHOR = {Bullimore, Mathew},
author={Dimofte, Tudor},
author={Gaiotto, Davide}, 
author= {Hilburn, Justin},
     TITLE = {Boundaries, mirror symmetry, and symplectic duality in 3d
              {$N=4$} gauge theory},
   JOURNAL = {J. High Energy Phys.},
  FJOURNAL = {Journal of High Energy Physics},
      YEAR = {2016},
    NUMBER = {10},
     PAGES = {108, front matter+191},
      ISSN = {1126-6708},
   MRCLASS = {81T13 (53D37)},
  MRNUMBER = {3578533},
MRREVIEWER = {Kazuhiro Hikami},
       DOI = {10.1007/JHEP10(2016)108},
       URL = {https://doi.org/10.1007/JHEP10(2016)108},
}

\bib{ChiHong3}{article}{
      author={Chow, Chi~Hong},
       title={Peterson-lam-shimozono's theorem is an affine analogue of quantum
  chevalley formula},
        date={2021},
     journal={arXiv:2110.09985},
}

\bib{ChiHong2}{article}{
      author={Chow, Chi~Hong},
       title={Quantum characteristic classes, moment correspondences and the
  hamiltonian groups of coadjoint orbits},
        date={2021},
     journal={arXiv:2107.08576},
}

\bib{OblomkovCarlsson}{article}{
      author={Carlsson, Erik},
      author={Oblomkov, Alexei},
       title={Affine schubert calculus and double coinvariants},
     journal={arxiv:1801.09033},
}

\bib{Dye}{article}{
      author={Dyer, M.~J.},
       title={The nil {H}ecke ring and {D}eodhar's conjecture on {B}ruhat
  intervals},
        date={1993},
        ISSN={0020-9910},
     journal={Invent. Math.},
      volume={111},
      number={3},
       pages={571\ndash 574},
         url={https://doi.org/10.1007/BF01231299},
      review={\MR{1202136}},
}

\bib{MR1614555}{article}{
      author={Edidin, Dan},
      author={Graham, William},
       title={Equivariant intersection theory},
        date={1998},
        ISSN={0020-9910},
     journal={Invent. Math.},
      volume={131},
      number={3},
       pages={595\ndash 634},
         url={https://doi.org/10.1007/s002220050214},
      review={\MR{1614555}},
}

\bib{MR3868001}{article}{
      author={Evans, Jonathan~David},
      author={Lekili, Yank\i},
       title={Generating the {F}ukaya categories of {H}amiltonian
  {$G$}-manifolds},
        date={2019},
        ISSN={0894-0347},
     journal={J. Amer. Math. Soc.},
      volume={32},
      number={1},
       pages={119\ndash 162},
         url={https://doi.org/10.1090/jams/909},
      review={\MR{3868001}},
}

\bib{MR3615739}{incollection}{
      author={Franz, Matthias},
       title={Syzygies in equivariant cohomology for non-abelian {L}ie groups},
        date={2016},
   booktitle={Configuration spaces},
      series={Springer INdAM Ser.},
      volume={14},
   publisher={Springer, [Cham]},
       pages={325\ndash 360},
      review={\MR{3615739}},
}

\bib{Gabber}{article}{
      author={Gabber, Ofer},
       title={The integrability of the characteristic variety},
        date={1981},
        ISSN={0002-9327},
     journal={Amer. J. Math.},
      volume={103},
      number={3},
       pages={445\ndash 468},
         url={https://doi-org.ezproxy.is.ed.ac.uk/10.2307/2374101},
      review={\MR{618321}},
}

\bib{Galkin-Golyshev-Iritani}{article}{
      author={Galkin, Sergey},
      author={Golyshev, Vasily},
      author={Iritani, Hiroshi},
       title={Gamma classes and quantum cohomology of {F}ano manifolds: gamma
  conjectures},
        date={2016},
        ISSN={0012-7094},
     journal={Duke Math. J.},
      volume={165},
      number={11},
       pages={2005\ndash 2077},
         url={https://doi-org.ezproxy.is.ed.ac.uk/10.1215/00127094-3476593},
      review={\MR{3536989}},
}

\bib{GinzburgLecture}{article}{
      author={Ginzburg, Victor},
       title={Lecture on {$\mathcal{D}$}-modules},
}

\bib{Ginzburg18}{incollection}{
      author={Ginzburg, Victor},
       title={Nil-{H}ecke algebras and {W}hittaker {$\mathcal{D}$}-modules},
        date={2018},
   booktitle={Lie groups, geometry, and representation theory},
      series={Progr. Math.},
      volume={326},
   publisher={Birkh\"{a}user/Springer, Cham},
       pages={137\ndash 184},
      review={\MR{3890208}},
}

\bib{Ginzburg4}{article}{
      author={Ginzburg, Viktor~L.},
      author={Guillemin, Victor},
      author={Karshon, Yael},
       title={Cobordisms and hamiltonian groups actions},
        date={2002},
     journal={Mathematical Surveys and Monographs, vol. 98, American
  Mathematical Society},
}

 \bib{MR1403947}{article}{
    AUTHOR = {Givental, Alexander B.},
     TITLE = {Homological geometry and mirror symmetry},
 BOOKTITLE = {Proceedings of the {I}nternational {C}ongress of
              {M}athematicians, {V}ol. 1, 2 ({Z}\"{u}rich, 1994)},
     PAGES = {472--480},
 PUBLISHER = {Birkh\"{a}user, Basel},
      YEAR = {1995},
   MRCLASS = {58D10 (14J40 14N10 32G20)},
  MRNUMBER = {1403947},
MRREVIEWER = {Bruce Hunt},
}

\bib{GiventalKim93}{article}{
    AUTHOR = {Givental, Alexander}, 
AUTHOR={Kim, Bumsig},
     TITLE = {Quantum cohomology of flag manifolds and Toda lattices},
   JOURNAL = {Comm. Math. Phys.},
  FJOURNAL = {Communications in Mathematical Physics},
    VOLUME = {168},
      YEAR = {1995},
    NUMBER = {3},
     PAGES = {609--641},
      ISSN = {0010-3616},
   MRCLASS = {58D10 (14M15 57R57 58F05)},
  MRNUMBER = {1328256},
MRREVIEWER = {Bruce Hunt},
}

\bib{Graham}{article}{
      author={Graham, William},
       title={Positivity in equivariant {S}chubert calculus},
        date={2001},
        ISSN={0012-7094},
     journal={Duke Math. J.},
      volume={109},
      number={3},
       pages={599\ndash 614},
         url={https://doi.org/10.1215/S0012-7094-01-10935-6},
      review={\MR{1853356}},
}

\bib{gls-fib}{book}{
      author={Guillemin, Victor},
      author={Lerman, Eugene},
      author={Sternberg, Shlomo},
       title={Symplectic fibrations and multiplicity diagrams},
   publisher={Cambridge University Press, Cambridge},
        date={1996},
        ISBN={0-521-44323-7},
         url={https://doi.org/10.1017/CBO9780511574788},
      review={\MR{1414677}},
}

\bib{GMII}{article}{
      author={Goresky, Mark},
      author={MacPherson, Robert},
       title={Intersection homology. {II}},
        date={1983},
        ISSN={0020-9910},
     journal={Invent. Math.},
      volume={72},
      number={1},
       pages={77\ndash 129},
         url={https://doi-org.ezproxy.is.ed.ac.uk/10.1007/BF01389130},
      review={\MR{696691}},
}

\bib{Iritani}{article}{
      author={Iritani, Hiroshi},
       title={Shift operators and toric mirror theorem},
        date={2017},
        ISSN={1465-3060},
     journal={Geom. Topol.},
      volume={21},
      number={1},
       pages={315\ndash 343},
         url={https://doi-org.ezproxy.is.ed.ac.uk/10.2140/gt.2017.21.315},
      review={\MR{3608715}},
}

\bib{MR3651574}{article}{
    AUTHOR = {Iritani, Hiroshi},
     TITLE = {A mirror construction for the big equivariant quantum
              cohomology of toric manifolds},
   JOURNAL = {Math. Ann.},
  FJOURNAL = {Mathematische Annalen},
    VOLUME = {368},
      YEAR = {2017},
    NUMBER = {1-2},
     PAGES = {279--316},
      ISSN = {0025-5831},
   MRCLASS = {14N35 (14J33 53D37 53D45)},
  MRNUMBER = {3651574},
MRREVIEWER = {Amin Gholampour},
       DOI = {10.1007/s00208-016-1437-7},
       URL = {https://doi.org/10.1007/s00208-016-1437-7},
}

\bib{Jak98}{article}{
      author={Jakob, Martin},
       title={A bordism-type description of homology},
        date={1998},
        ISSN={0025-2611},
     journal={Manuscripta Math.},
      volume={96},
      number={1},
       pages={67\ndash 80},
         url={https://doi-org.ezproxy.is.ed.ac.uk/10.1007/s002290050054},
      review={\MR{1624352}},
}

\bib{Kim}{article}{
    AUTHOR = {Kim, Bumsig},
     TITLE = {Quantum cohomology of flag manifolds {$G/B$} and quantum
              {T}oda lattices},
   JOURNAL = {Ann. of Math. (2)},
  FJOURNAL = {Annals of Mathematics. Second Series},
    VOLUME = {149},
      YEAR = {1999},
    NUMBER = {1},
     PAGES = {129--148},
      ISSN = {0003-486X},
   MRCLASS = {14N35 (14M15 37N20)},
  MRNUMBER = {1680543},
       DOI = {10.2307/121021},
       URL = {https://doi.org/10.2307/121021},
}

\bib{KK2}{article}{
      author={Kostant, Bertram},
      author={Kumar, Shrawan},
       title={The nil {H}ecke ring and cohomology of {$G/P$} for a
  {K}ac-{M}oody group {$G$}},
        date={1986},
        ISSN={0001-8708},
     journal={Adv. in Math.},
      volume={62},
      number={3},
       pages={187\ndash 237},
  url={https://doi-org.ezproxy.is.ed.ac.uk/10.1016/0001-8708(86)90101-5},
      review={\MR{866159}},
}

\bib{KK1}{article}{
      author={Kostant, Bertram},
      author={Kumar, Shrawan},
       title={{$T$}-equivariant {$K$}-theory of generalized flag varieties},
        date={1990},
        ISSN={0022-040X},
     journal={J. Differential Geom.},
      volume={32},
      number={2},
       pages={549\ndash 603},
  url={http://projecteuclid.org.ezproxy.is.ed.ac.uk/euclid.jdg/1214445320},
      review={\MR{1072919}},
}

\bib{Ku2}{article}{
      author={Kumar, Shrawan},
       title={The nil {H}ecke ring and singularity of {S}chubert varieties},
        date={1996},
        ISSN={0020-9910},
     journal={Invent. Math.},
      volume={123},
      number={3},
       pages={471\ndash 506},
         url={https://doi.org/10.1007/s002220050038},
      review={\MR{1383959}},
}

\bib{Ku}{book}{
      author={Kumar, Shrawan},
       title={Kac-{M}oody groups, their flag varieties and representation
  theory},
      series={Progress in Mathematics},
   publisher={Birkh\"{a}user Boston, Inc., Boston, MA},
        date={2002},
      volume={204},
        ISBN={0-8176-4227-7},
         url={https://doi-org.ezproxy.is.ed.ac.uk/10.1007/978-1-4612-0105-2},
      review={\MR{1923198}},
}

\bib{LJ21}{article}{
      author={Liebenschutz-Jones, Todd},
       title={Shift operators and connections on equivariant symplectic
  cohomology},
        date={2021},
     journal={arXiv:2104.01891},
}

\bib{Lam}{article}{
      author={Lam, Thomas},
       title={Schubert polynomials for the affine {G}rassmannian},
        date={2008},
        ISSN={0894-0347},
     journal={J. Amer. Math. Soc.},
      volume={21},
      number={1},
       pages={259\ndash 281},
         url={https://doi.org/10.1090/S0894-0347-06-00553-4},
      review={\MR{2350056}},
}

\bib{lametal}{book}{
      author={Lam, Thomas},
      author={Lapointe, Luc},
      author={Morse, Jennifer},
      author={Schilling, Anne},
      author={Shimozono, Mark},
      author={Zabrocki, Mike},
       title={k-schur functions and affine schubert calculus, volume 33 of
  fields institute monographs.},
   publisher={Springer-Verlag, New York},
        date={2014},
}

\bib{Lam-Shimozono}{article}{
      author={Lam, Thomas},
      author={Shimozono, Mark},
       title={Quantum cohomology of {$G/P$} and homology of affine
  {G}rassmannian},
        date={2010},
        ISSN={0001-5962},
     journal={Acta Math.},
      volume={204},
      number={1},
       pages={49\ndash 90},
         url={https://doi.org/10.1007/s11511-010-0045-8},
      review={\MR{2600433}},
}

\bib{Malm}{article}{
      author={Malm, Eric~J.},
       title={String topology and the based loop space},
        date={2011},
     journal={arXiv:1103.6198},
}

\bib{MO}{article}{
      author={Maulik, Davesh},
      author={Okounkov, Andrei},
       title={Quantum groups and quantum cohomology},
        date={2019},
        ISSN={0303-1179},
     journal={Ast\'{e}risque},
      number={408},
       pages={ix+209},
         url={https://doi-org.ezproxy.is.ed.ac.uk/10.24033/ast},
      review={\MR{3951025}},
}

\bib{MSJbook}{book}{
      author={McDuff, Dusa},
      author={Salamon, Dietmar},
       title={{$J$}-holomorphic curves and symplectic topology},
      series={American Mathematical Society Colloquium Publications},
   publisher={American Mathematical Society, Providence, RI},
        date={2004},
      volume={52},
        ISBN={0-8218-3485-1},
         url={https://doi-org.ezproxy.is.ed.ac.uk/10.1090/coll/052},
      review={\MR{2045629}},
}

\bib{MSbook}{book}{
      author={McDuff, Dusa},
      author={Salamon, Dietmar},
       title={Introduction to symplectic topology},
     edition={Third},
      series={Oxford Graduate Texts in Mathematics},
   publisher={Oxford University Press, Oxford},
        date={2017},
        ISBN={978-0-19-879490-5; 978-0-19-879489-9},
  url={https://doi-org.ezproxy.is.ed.ac.uk/10.1093/oso/9780198794899.001.0001},
      review={\MR{3674984}},
}

\bib{MR2753265}{article}{
      author={Okounkov, Andrei},
      author={Pandharipande, Rahul},
       title={The quantum differential equation of the {H}ilbert scheme of
  points in the plane},
        date={2010},
        ISSN={1083-4362},
     journal={Transform. Groups},
      volume={15},
      number={4},
       pages={965\ndash 982},
         url={https://doi.org/10.1007/s00031-010-9116-3},
      review={\MR{2753265}},
}

\bib{Oh-Tanaka}{article}{
      author={Oh, Yong-Geun},
      author={Tanaka, Hiro},
       title={Continuous and coherent actions on wrapped fukaya categories},
        date={2019},
     journal={arxiv.org},
        ISSN={abs/1911.},
}

\bib{Peterson}{article}{
      author={Peterson, Dale},
       title={Quantum cohomology of {$G/P$}},
        date={1997},
     journal={Lecture notes, M.I.T.},
}

\bib{PS86}{book}{
      author={Pressley, Andrew},
      author={Segal, Graeme},
       title={Loop groups},
      series={Oxford Mathematical Monographs},
   publisher={The Clarendon Press, Oxford University Press, New York},
        date={1986},
        ISBN={0-19-853535-X},
        note={Oxford Science Publications},
      review={\MR{900587}},
}

\bib{Sav}{article}{
      author={Savelyev, Yasha},
       title={Quantum characteristic classes and the {H}ofer metric},
        date={2008},
        ISSN={1465-3060},
     journal={Geom. Topol.},
      volume={12},
      number={4},
       pages={2277\ndash 2326},
         url={https://doi-org.ezproxy.is.ed.ac.uk/10.2140/gt.2008.12.2277},
      review={\MR{2443967}},
}

\bib{MR1487754}{article}{
      author={Seidel, Paul},
       title={{$\pi_1$} of symplectic automorphism groups and invertibles in
  quantum homology rings},
        date={1997},
        ISSN={1016-443X},
     journal={Geom. Funct. Anal.},
      volume={7},
      number={6},
       pages={1046\ndash 1095},
         url={https://doi.org/10.1007/s000390050037},
      review={\MR{1487754}},
}

\bib{tDieck}{book}{
      author={tom Dieck, Tammo},
       title={Transformation groups},
      series={De Gruyter Studies in Mathematics},
   publisher={Walter de Gruyter \& Co., Berlin},
        date={1987},
      volume={8},
        ISBN={3-11-009745-1},
         url={https://doi-org.ezproxy.is.ed.ac.uk/10.1515/9783110858372.312},
      review={\MR{889050}},
}

\bib{Teleman2}{inproceedings}{
      author={Teleman, Constantin},
       title={Gauge theory and mirror symmetry},
        date={2014},
   booktitle={Proceedings of the {I}nternational {C}ongress of
  {M}athematicians---{S}eoul 2014. {V}ol. {II}},
   publisher={Kyung Moon Sa, Seoul},
       pages={1309\ndash 1332},
      review={\MR{3728663}},
}

\bib{Teleman1}{article}{
      author={Teleman, Constantin},
       title={The r\^{o}le of {C}oulomb branches in 2{D} gauge theory},
        date={2021},
        ISSN={1435-9855},
     journal={J. Eur. Math. Soc. (JEMS)},
      volume={23},
      number={11},
       pages={3497\ndash 3520},
         url={https://doi-org.ezproxy.is.ed.ac.uk/10.4171/jems/1071},
      review={\MR{4310810}},
}

\end{biblist}
\end{bibdiv}

{\small

\medskip
\noindent Eduardo Gonz\'alez \\
\noindent University of Massachusetts, Boston, 100 William T, Morrissey Blvd, Boston, MA 02125, US\\
{\it e-mail:} Eduardo.Gonzalez@umb.edu
\medskip

\medskip
\noindent Cheuk Yu Mak\\
\noindent School of Mathematics, University of Edinburgh, James Clerk Maxwell Building, Edinburgh, EH9 3FD, UK\\
{\it e-mail:} cheukyu.mak@ed.ac.uk

\medskip
 \noindent Dan Pomerleano\\
\noindent University of Massachusetts, Boston, 100 William T, Morrissey Blvd, Boston, MA 02125, US\\
 {\it e-mail:} Daniel.Pomerleano@umb.edu

}

\end{document}